\newtheorem{theorem}{Theorem}[section]
\newtheorem{lemma}[theorem]{Lemma}
\newtheorem{e-proposition}[theorem]{Proposition}
\newtheorem{corollary}[theorem]{Corollary}
\newtheorem{definition}[theorem]{Definition\rm}
\newtheorem{example}{\it Example\/}
\begin{document}
\title{Generic Riemannian submersions from nearly Kaehler manifolds}
\author{Rupali Kaushal, Rashmi Sachdeva, Rakesh Kumar\thanks{Corresponding Author} and R. K. Nagaich}
\date{}
\maketitle
\begin{abstract}
We study generic Riemannian submersions from nearly Kaehler manifolds onto Riemannian manifolds. We investigate conditions for the integrability of various distributions arising for generic Riemannian submersions and also obtain conditions for leaves to be totally geodesic foliations. We obtain conditions for a generic Riemannian submersion to be a totally geodesic map and also study generic Riemannian submersions with totally umbilical fibers. Finally, we derive conditions for generic Riemannian submersions to be harmonic map.
\end{abstract}
\textbf{2010 Mathematics Subject Classification:} 53C55, 53C12, 53B20.\\
\textbf{Key words and phrases:} Nearly Kaehler manifolds; Generic Riemannian submersions; Totally geodesic maps; Harmonic maps.
\section{Introduction}
Riemannian submersions between Riemannian manifolds equipped with an additional structure of almost complex type were introduced by Watson in \cite{Wat}. Watson defined an almost Hermitian submersion between almost Hermitian manifolds and showed that, in most of the cases, the base manifold and each fiber have the same kind of structure as the total space. Later, Sahin \cite{Sahin10} introduced the notion of anti-invariant Riemannian submersions from almost Hermitian manifolds onto Riemannian manifolds, where the vertical distribution is anti-invariant under the action of almost complex structure of the total manifold. As a generalization of anti-invariant submersions and almost Hermitian submersions, Sahin \cite{Sahin11} introduced the notion of semi-invariant Riemannian submersions from almost Hermitian manifolds onto Riemannian manifolds.\\
\indent As a generalization of semi-invariant submersions, Ali and Fatima~\cite{Ali} introduced the notion of generic Riemannian submersions from almost Hermitian manifolds onto Riemannian manifolds. Then, Fatima and Ali~\cite{Ali1} studied submersion of generic submanifolds of Kaehler manifolds onto almost Hermitian manifolds. Later, Akyol \cite{Akyol17} studied generic submersions from almost Riemannian product manifolds. Recently, Sayar et al. \cite{cem} introduced a new kind of Riemannian submersions, where the fibers are generic submanifolds, in the sense of Ronsse~\cite{Ronsse} and called such submersions as generic submersions.\\
\indent A more general and geometrically interesting class of almost Hermitian manifolds is of nearly Kaehler manifolds, which is one of the sixteen classes of almost Hermitian manifolds and given by Gray and Hervella in their celebrated paper \cite{grayharvella}. The geometrical meaning of nearly Kaehler condition is that the geodesics on the manifolds are holomorphically planar curves. Nearly Kaehler manifolds were extensively studied by Gray in \cite{Gray70} and a well known example of a non-Kaehlerian nearly Kaehler manifold is $6-$dimensional sphere.\\
\indent Ali and Fatima \cite{af13nk} studied anti-invariant Riemannian submersions from nearly Kaehler manifolds onto Riemannian manifolds. Recently, Rupali et al. \cite{rrrr} studied semi-invariant Riemannian submersions from nearly Kaehler manifolds onto Riemannian manifolds. In this paper, we study generic Riemannian submersions from nearly Kaehler manifolds onto Riemannian manifolds. We investigate conditions for the integrability of various distributions arising for generic Riemannian submersions and also obtained conditions for leaves to be totally geodesic foliations. We also obtain conditions for generic Riemannian submersion to be a totally geodesic map and to be a harmonic map.
\section{Preliminaries}
Let $(M, g_{M}, J)$ be an almost Hermitian manifold with a Riemannian metric $g_{M}$ and an almost complex structure $J$ such that
\begin{equation*}
J^{2} = -I, \quad g_{M}(X, Y) = g_{M}(JX, JY),
\end{equation*}
for any $X, Y\in\Gamma(TM)$. Let $\nabla$ be the Levi--Civita connection on $M$ with respect to $g_{M}$. If the almost complex structure $J$ is parallel with respect to $\nabla$, that is, $(\nabla_{X}J)Y = 0$, then $M$ is called a Kaehler manifold and if the tensor field $\nabla J$ is skew-symmetric, that is
\begin{equation}\label{eq:1}
(\nabla_{X}J)Y + (\nabla_{Y}J)X = 0,
\end{equation}
then $M$ is called a nearly Kaehler manifold.\\
\indent Let $(M, g_{M})$ and $(B, g_{B})$ be Riemannian manifolds of dimensions $m$ and $n$, respectively, where $m > n$. Then, a map $F: (M, g_{M})\rightarrow (B, g_{B})$ is called a Riemannian submersion \cite{neill} if it satisfies the following axioms:
\begin{itemize}
  \item[A1.] $F$ has maximal rank.
  \item[A2.] The differential map $F_{*}$ of $F$ preserves the scalar product of vectors normal to
the fibers.
\end{itemize}
Here, the fibers $F^{- 1}(y)$, $y \in B$ are $(m - n)-$dimensional submanifolds of $M$. A vector field on $M$ is called vertical (respectively, horizontal) if it is always tangent (respectively, orthogonal) to the fibers. The vertical distribution of $M$ is denoted by $\mathcal{V}_{p} = (ker F_{*p})$, $p\in M$, which is always integrable and the orthogonal distribution to $\mathcal{V}_{p}$ is denoted by $\mathcal{H}_{p} = (ker F_{*p})^{\bot}$, called the horizontal distribution, hence $TM = \mathcal{V} \oplus \mathcal{H}$. A vector field $X$ on $M$ is called a basic vector field if $X$ is horizontal and $F-$related to a vector field $X_{*}$ on $B$, that is, $F_{*}X_{p} = X_{*F(p)}$, for any $p\in M$.\\
\indent Next, we recall the following important lemma from O'Neill {\cite{neill}} for later uses.
\begin{lemma}\label{lem:1} Let $F: (M, g_{M})\rightarrow (B, g_{B})$ be a Riemannian submersion between Riemannian manifolds and let $\nabla$ and $\overline{\nabla}$ be the Levi--Civita connections of $M$ and $B$, respectively. If $X, Y$ are basic vector fields on $M$ and are $F-$related to $X_{*}$, $Y_{*}$ respectively, then
\begin{itemize}
  \item[(i)] $g_{M}(X, Y) = g_{B}(X_{*}, Y_{*})\circ F$.
  \item[(ii)] $\mathcal{H}[X, Y]$ is the basic vector field and $F-$related to $[X_{*}, Y_{*}]$.
  \item[(iii)] $(\nabla_{X}Y)^\mathcal{H}$ is the basic vector field and $F-$related to ${\overline{\nabla}}_{X_{*}}Y_{*}$.
  \item[(iv)] For any vertical vector field $V$, $[X, V]$ is always vertical.
\end{itemize}
\end{lemma}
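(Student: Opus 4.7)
The plan is to verify the four items separately, relying only on the defining axioms of a Riemannian submersion, the definition of basic vector fields, and standard identities for $F$-related vector fields.

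Parts (i) and (ii) are essentially immediate. For (i), axiom A2 applied pointwise gives $g_{M}(X_{p}, Y_{p}) = g_{B}(F_{*}X_{p}, F_{*}Y_{p})$ because $X, Y$ are horizontal; $F$-relatedness then replaces the right side by $g_{B}(X_{*}, Y_{*})_{F(p)}$, and this holds at every $p$. For (ii), I would invoke the general identity that $[X, Y]$ is $F$-related to $[X_{*}, Y_{*}]$ whenever $X, Y$ are $F$-related to $X_{*}, Y_{*}$. Decomposing $[X, Y] = \mathcal{H}[X, Y] + \mathcal{V}[X, Y]$ and using $\mathcal{V}[X, Y] \in \ker F_{*}$ gives $F_{*}(\mathcal{H}[X, Y]_{p}) = [X_{*}, Y_{*}]_{F(p)}$; since the right side depends only on $F(p)$, $\mathcal{H}[X, Y]$ is a basic horizontal field and is $F$-related to $[X_{*}, Y_{*}]$.

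Part (iii) is the main technical step and I expect most of the work to lie here. The strategy is to apply the Koszul formula on both $M$ and $B$ and to match the two expressions term by term. For any basic horizontal $Z$ with $F$-related $Z_{*}$, each of the six summands in $2 g_{M}(\nabla_{X} Y, Z)$ translates into its $B$-counterpart: the inner products via (i); the directional derivatives like $X g_{M}(Y, Z)$ through the chain-rule identity $X(f \circ F) = (X_{*} f) \circ F$ applied to the pulled-back function provided by (i); and the three bracket/inner-product terms via (ii) together with the horizontality of $Z$, which lets one replace $[X, Y]$, $[X, Z]$, $[Y, Z]$ by their horizontal parts without changing the inner products. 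Summing yields $g_{M}((\nabla_{X} Y)^{\mathcal{H}}, Z)_{p} = g_{B}(\overline{\nabla}_{X_{*}} Y_{*}, Z_{*})_{F(p)}$. Because basic horizontal fields span $\mathcal{H}$ pointwise, this scalar identity determines $F_{*}((\nabla_{X} Y)^{\mathcal{H}}_{p}) = (\overline{\nabla}_{X_{*}} Y_{*})_{F(p)}$, and the smooth dependence on $p$ shows $(\nabla_{X} Y)^{\mathcal{H}}$ is basic.

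Finally, for (iv), the cleanest route is via flows: since $X$ is basic, its local flow $\phi_{t}$ projects under $F$ to the local flow of $X_{*}$ on $B$, hence carries fibers to fibers. Consequently $(\phi_{-t})_{*}$ preserves the vertical distribution, and passing to the limit in $[X, V] = \lim_{t \to 0} t^{-1}((\phi_{-t})_{*} V - V)$ shows that $[X, V]$ is vertical. An alternative verification, avoiding flows, checks $F_{*}[X, V] = 0$ directly on pulled-back test functions $f \circ F$: since $V$ is vertical, $V(f \circ F) = 0$, and using $X(f \circ F) = (X_{*} f) \circ F$, a short computation gives $[X, V](f \circ F) = X(0) - V((X_{*} f) \circ F) = 0$, whence $[X, V] \in \ker F_{*} = \mathcal{V}$.
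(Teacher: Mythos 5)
The paper offers no proof of this lemma at all: it is explicitly recalled from O'Neill's 1966 paper, so there is nothing internal to compare your argument against. Your blind proof is correct and is essentially the classical argument. Parts (i), (ii) and (iv) are exactly as in the standard treatment; for (iii), the Koszul-formula term-by-term matching, combined with the fact that basic fields span the horizontal space pointwise and that $F_{*}$ is an isometry on $\mathcal{H}$ (axiom A2), is the standard route and is carried out correctly. The only step you leave slightly implicit is at the very end of (iv): the term $V\bigl((X_{*}f)\circ F\bigr)$ vanishes because $(X_{*}f)\circ F$ is itself a function pulled back from $B$, hence constant along fibers and annihilated by any vertical $V$; you should say this explicitly rather than writing the chain as if it were immediate, but it is a one-line fix and not a genuine gap.
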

\noindent It is known that the geometry of Riemannian submersions is characterized by O'Neill's tensors $\mathcal{T}$ and $\mathcal{A}$. For any arbitrary vector fields $U$ and $V$ on $M$, these tensors are defined as
\begin{equation}\label{eq:2}
\mathcal{T}_{U}V = \mathcal{H} \nabla_{\mathcal{V} U}\mathcal{V} V + \mathcal{V}\nabla_{\mathcal{V} U}\mathcal{H}V,\quad
\mathcal{A}_{U}V = \mathcal{V}\nabla_{\mathcal{H}U}\mathcal{H}V + \mathcal{H}\nabla_{\mathcal{H}U}\mathcal{V}V,
\end{equation}
where $\nabla$ is the Levi--Civita connection on $M$ with respect to $g_{M}$. It is easy to see that the tensor fields $\mathcal{T}$ and $\mathcal{A}$ are vertical and horizontal, respectively and satisfy
\begin{eqnarray}\label{eq:3}
\mathcal{T}_{U}V =  \mathcal{T}_{V}U, \quad\forall~ U, V\in\Gamma(ker F_{*}),
\end{eqnarray}
\begin{eqnarray}\label{eq:4}
\mathcal{A}_{X}Y = - \mathcal{A}_{Y}X = \frac{1}{2}\mathcal{V}[X, Y],\quad\forall~ X, Y\in\Gamma(ker F_{*})^\bot.
\end{eqnarray}
It should be noted that the tensor $\mathcal{T}$ serves as the second fundamental form of the fibers and hence a Riemannian submersion $F$ has totally geodesic fibers if and only if $\mathcal{T}$ vanishes identically. Moreover, relation in (\ref{eq:4}), shows that $\mathcal{A}$ is necessarily the integrability tensor of the horizontal distribution $(kerF_{*})^{\bot}$ on $M$. Using the definition of these tensors, we have the following important lemma from \cite{neill} immediately.
\begin{lemma}\label{lem:2} Let $X, Y$ be horizontal vector fields and $U, V$ be vertical vector fields. Then
\begin{itemize}
            \item [(i)] $\nabla_{U}V = \mathcal{T}_{U}V + \hat{\nabla}_{U}V$,
            \item [(ii)] $\nabla_{U}X = \mathcal H \nabla_{U}X + \mathcal{T}_{U}X$,
            \item [(iii)] $\nabla_{X}U = \mathcal{A}_{X}U + \mathcal{V} \nabla_{X}U$,
            \item [(iv)] $\nabla_{X}Y =\mathcal H \nabla_{X}Y + \mathcal{A}_{X}Y$,
\end{itemize}
where $\hat{\nabla}_{U}V = \mathcal{V}({\nabla}_{U}V).$ If $X$ is basic then $\mathcal{H}(\nabla_{U}X) = \mathcal{A}_{X}U$.
\end{lemma}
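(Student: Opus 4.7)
The plan is to prove each of the four identities by splitting the Levi--Civita connection term into its horizontal and vertical parts and matching these pieces against the definitions of the O'Neill tensors in (\ref{eq:2}). The arguments for the four items are essentially the same in spirit: in each case exactly one of the two summands appearing in the definition of $\mathcal{T}$ or $\mathcal{A}$ survives because the other input is either purely vertical or purely horizontal.

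Concretely, for (i) I would take $U,V\in\Gamma(\ker F_{*})$, note that $\mathcal{V}U=U$, $\mathcal{V}V=V$ and $\mathcal{H}V=0$, and read off from (\ref{eq:2}) that $\mathcal{T}_{U}V=\mathcal{H}\nabla_{U}V$; combining this with $\nabla_{U}V=\mathcal{V}\nabla_{U}V+\mathcal{H}\nabla_{U}V$ and the defining abbreviation $\hat{\nabla}_{U}V=\mathcal{V}(\nabla_{U}V)$ gives the formula. For (ii), with $U$ vertical and $X$ horizontal, $\mathcal{V}X=0$ makes the first summand in $\mathcal{T}_{U}X$ vanish, leaving $\mathcal{T}_{U}X=\mathcal{V}\nabla_{U}X$, and decomposing $\nabla_{U}X$ yields the claim. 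Parts (iii) and (iv) are the dual computations: in (iii) the vanishing of $\mathcal{H}U$ reduces $\mathcal{A}_{X}U$ to $\mathcal{H}\nabla_{X}U$, while in (iv) the vanishing of $\mathcal{V}Y$ reduces $\mathcal{A}_{X}Y$ to $\mathcal{V}\nabla_{X}Y$.

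For the final assertion that $\mathcal{H}(\nabla_{U}X)=\mathcal{A}_{X}U$ whenever $X$ is basic, the natural route is to use torsion-freeness of $\nabla$ to write $\nabla_{U}X=\nabla_{X}U+[U,X]$, and then invoke Lemma~\ref{lem:1}(iv), which tells us that $[X,U]$ is vertical for basic $X$. Taking the horizontal part kills the bracket term, so $\mathcal{H}(\nabla_{U}X)=\mathcal{H}(\nabla_{X}U)$, which by part (iii) equals $\mathcal{A}_{X}U$.

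None of the steps should present any real obstacle; the only thing requiring a bit of care is keeping the bookkeeping of horizontal/vertical projections straight and citing Lemma~\ref{lem:1}(iv) in the correct place for the basic-vector-field statement. Since this is O'Neill's classical decomposition lemma, the author will almost certainly either reproduce this short calculation or simply refer to \cite{neill}, and I would do the same.
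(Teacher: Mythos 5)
Your proof is correct and is exactly the standard verification: the paper itself gives no argument, stating only that the lemma follows ``immediately'' from the definitions in (\ref{eq:2}) and citing \cite{neill}, and your case-by-case projection bookkeeping (together with torsion-freeness and Lemma~\ref{lem:1}(iv) for the basic-field identity $\mathcal{H}(\nabla_{U}X)=\mathcal{A}_{X}U$) is precisely the computation being alluded to.
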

\noindent Let $F:(M, g_{M})\rightarrow (B, g_{B})$ be a smooth map between the Riemannian manifolds. Then, the differential $F_{*}$ of $F$ can be viewed as a section of the bundle $Hom(TM, F^{-1}(TB))$ of $M$, where $F^{-1}(TB)$ is the pullback bundle with fibres $(F^{-1}(TB))_p = T_{F(p)}B$, $p\in M$. $Hom(TM, F^{-1}(TB))$ has connection $\nabla^{F}$ induced from the Levi--Civita connection $\nabla$ of $M$ and a pull back connection. Then, the second fundamental form $(\nabla F_{*})$ of $F$ is given by
\begin{eqnarray}\label{eq:5}
(\nabla F_{*})(X,Y) = \nabla_{X}^{F} F_{*}(Y) - F_{*}(\nabla_{X}Y),
\end{eqnarray}
for any $X, Y\in\Gamma(TM)$. It should be noted that the second fundamental form is always symmetric. Further, the smooth map $F$ is said to be harmonic if $trace(\nabla F_{*})=0.$ The tension field $\tau(F)$ of $F$ is the section of $\Gamma(F^{-1}(TB))$ and given by
\begin{equation}\label{eq:6}
\tau(F) = div F_{*} = \sum^{m}_{i=1} (\nabla F_{*})(e_i, e_i),
\end{equation}
where $\{e_{1},\ldots, e_{m}\}$ is the orthonormal frame on $M$ then $F$ is harmonic if and only if $\tau(F)=0$, for more details, see \cite{Bairdwood}.
\section{Generic Riemannian Submersions}
Let $N$ be a real submanifold of an almost Hermitian manifold $(M, g_{M}, J)$ and let $\mathcal{D}_{p} = T_{p}M\cap JT_{p}M$, $p\in N$, be the maximal complex subspace of the tangent space $T_{p}M$ which is contained in $T_{p}N$. If the dimension of $\mathcal{D}_{p}$ is constant along $N$ and it defines a differentiable distribution on $N$, then $N$ is called a generic submanifold of $M$, for details, see \cite{chen}. Here, the distribution $\mathcal{D}_{p}$ on $N$ is called the holomorphic distribution. A generic submanifold is said to be a purely real submanifold if $\mathcal{D}_{p} = \{0\}$. Denote the orthogonal complementary distribution to $\mathcal{D}$ in $TN$ by $\mathcal{D}^{\bot}$, known as the purely real distribution and satisfies $\mathcal{D}_{p}\bot\mathcal{D}_{p}^{\bot}$, $\mathcal{D}_{p}^{\bot}\cap J\mathcal{D}_{p}^{\bot} = \{0\}$. Therefore, for any vector field $X$ tangent to $N$, we put $JX = tX + fX,$
where $tX$ and $fX$ are the tangential and normal components of $JX$, respectively. Further, for a generic submanifold, we have following important observations:
\begin{equation}\label{eq:7a}
t\mathcal{D} = \mathcal{D},\quad f\mathcal{D} = \{0\},\quad t\mathcal{D}_{p}^{\bot}\subset \mathcal{D}_{p}^{\bot},\quad f\mathcal{D}_{p}^{\bot}\subset T^{\bot}N.
\end{equation}
\begin{definition}\cite{Ali}.
Let $F: (M, g_{M}, J)\rightarrow (B, g_{B})$ be a Riemannian submersion from an almost Hermitian manifold onto a Riemannian manifold. Then the Riemannian submersion $F$ is called a generic Riemannian submersion if there is a distribution $\mathcal{D}\subset\Gamma(ker F_{*})$ such that
\begin{equation}\label{eq:7}
(ker F_{*}) = \mathcal{D}\oplus\mathcal{D}^{\bot},\quad J\mathcal{D} = \mathcal{D},
\end{equation}
where $\mathcal{D}^{\bot}$ is the orthogonal complementary of $\mathcal{D}$ in $(ker F_{*})$, and is called a purely real distribution on the fibers of the submersion $F$.
\end{definition}
\noindent It is well known that the vertical distribution $(ker F_{*})$ is always integrable. Hence, above definition implies that the integral manifolds (fibers) ${\pi}^{-1}(q)$, $q\in B$ of $(ker F_{*})$ are generic submanifolds of $M$.\\
\indent From the definition of generic Riemannian submersions, it is obvious that for any $U{\in{\Gamma(ker F_{*})}}$, we can write
\begin{equation}\label{eq:8}
JU = \phi{U} + \omega{U},
\end{equation}
where $\phi U\in\Gamma(ker F_{*})$ and $\omega U\in\Gamma(ker F_{*})^{\bot}$. From (\ref{eq:7a}), it is clear that $\omega\mathcal{D} = \{0\}$ and $\omega\mathcal{D}^{\bot}\in\Gamma(ker F_{*})^{\bot}$. Denote the complementary distribution to $\omega\mathcal{D}^{\bot}$ in $(ker F_{*})^{\bot}$ by $\mu$ then we get $(ker F_{*})^{\bot} = \omega\mathcal{D}^{\bot}\oplus\mu$, and that $\mu$ is invariant under $J$. Thus, for $X\in\Gamma(ker F_{*})^{\bot}$, we can write
\begin{equation}\label{eq:9}
JX = \mathcal{B}X + \mathcal{C}X,
\end{equation}
where $\mathcal{B}X\in\Gamma(\mathcal{D}^{\bot})$ and $\mathcal{C}X\in\Gamma(\mu)$.
\begin{example}
Let $(\mathbb{R}^{8}, J, g_{1})$ be an almost Hermitian manifold endowed with an almost complex structure $(J, g_{1})$ and given by $g_{1} = dx_{1}^{2} + dx_{2}^{2} + dx_{3}^{2} + dx_{4}^{2} + dx_{5}^{2} + dx_{6}^{2} + dx_{7}^{2} + dx_{8}^{2}$, $J(x_{1}, x_{2}, x_{3}, x_{4}, x_{5}, x_{6}, x_{7}, x_{8}) = (- x_{2}, x_{1}, - x_{4}, x_{3}, - x_{6},\\ x_{5}, - x_{8}, x_{7}).$ Let $(\mathbb{R}^{4}, g_{2})$ be a Riemannian manifold endowed with metric $g_{2} = dy_{1}^{2} + dy_{2}^{2} + dy_{3}^{2} + dy_{4}^{2}$ and $F:(\mathbb{R}^{8}, J, g_{1})\rightarrow (\mathbb{R}^{4}, g_{2})$ be a map defined by
$F(x_{1}, x_{2}, x_{3}, x_{4}, x_{5}, x_{6}, x_{7}, x_{8}) = (\frac{x_{1} + x_{3}}{\sqrt{2}}, \frac{x_{2} + x_{4}}{\sqrt{2}}, \sin\alpha x_{5} + \cos\alpha x_{7}, \sin\alpha x_{6} - \cos\alpha x_{8}).$
Then, by straightforward calculations
\begin{eqnarray*}
(ker F_{*})& =& span\Big\{X_{1} = \partial x_{1} - \partial x_{3}, X_{2} = \partial x_{2} - \partial x_{4},\nonumber\\&&
X_{3} = \cos\alpha\partial x_{5} - \sin\alpha\partial x_{7}, X_{4} = \cos\alpha\partial x_{6} + \sin\alpha\partial x_{8}\Big\}.
\end{eqnarray*}
Clearly, $JX_{1} = X_{2}$ therefore $\mathcal{D} = span\{X_{1}, X_{2}\}$ and $\mathcal{D}^{\bot}$ is a slant distribution with slant angle $2\alpha$ therefore it is a purely real distribution. Moreover
\begin{eqnarray*}
(ker F_{*})^{\bot}& =& span\Big\{Z_{1} = \partial x_{1} + \partial x_{3}, Z_{2} = \partial x_{2} + \partial x_{4},\nonumber\\&& Z_{3} = \sin\alpha\partial x_{5} + \cos\alpha\partial x_{7}, Z_{4} = \sin\alpha\partial x_{6} - \cos\alpha\partial x_{8}\Big\}.
\end{eqnarray*}
Since $JZ_{1} = - Z_{2}$ therefore $\mu = span\{Z_{1}, Z_{2}\}$ and invariant with respect to $J$. Furthermore, we derive $F_{*}Z_{1} = \sqrt{2}\partial y_{1}$, $F_{*}Z_{2} = \sqrt{2}\partial y_{2}$, $F_{*}Z_{3} = \partial y_{3}$ and $F_{*}Z_{4} = \partial y_{4}$ such that
$$
g_{1}(Z_{1}, Z_{1}) = 2 = g_{2}(F_{*}Z_{1}, F_{*}Z_{1}),\quad g_{1}(Z_{2}, Z_{2}) = 2 = g_{2}(F_{*}Z_{2}, F_{*}Z_{2}),
$$
$$
g_{1}(Z_{3}, Z_{3}) = 1 = g_{2}(F_{*}Z_{3}, F_{*}Z_{3}),\quad g_{1}(Z_{4}, Z_{4}) = 1 = g_{2}(F_{*}Z_{4}, F_{*}Z_{4}).
$$
Hence, $F$ is a proper generic Riemannian submersion from an almost Hermitian manifold onto a Riemannian manifold.
\end{example}
\begin{example}
Let $F:(\mathbb{R}^{8}, J, g_{1})\rightarrow (\mathbb{R}^{4}, g_{2})$ be a map defined by
$$
F(x_{1}, x_{2}, x_{3}, x_{4}, x_{5}, x_{6}, x_{7}, x_{8}) = \Big(\frac{x_{2} - x_{3}}{\sqrt{2}}, x_{4}, x_{5}, x_{6}\Big),
$$
where $(\mathbb{R}^{8}, J, g_{1})$ and $(\mathbb{R}^{4}, g_{2})$ defined as in above example. Then, by straightforward calculations
\begin{eqnarray*}
(ker F_{*}) = span\Big\{X_{1} = \partial x_{1}, X_{2} = \partial x_{2} + \partial x_{3},
X_{3} = \partial x_{7}, X_{4} = \partial x_{8}\Big\}.
\end{eqnarray*}
Clearly, $\mathcal{D} = span\{X_{3}, X_{4}\}$ and $\mathcal{D}^{\bot}$ is a slant distribution with slant angle $\frac{\pi}{4}$ therefore it is a purely real distribution. Moreover
\begin{eqnarray*}
(ker F_{*})^{\bot} = span\Big\{Z_{1} = \partial x_{2} - \partial x_{3}, Z_{2} = \partial x_{4}, Z_{3} = \partial x_{5}, Z_{4} = \partial x_{6}\Big\}.
\end{eqnarray*}
where $\mu = span\{Z_{3}, Z_{4}\}$. Furthermore, we derive $F_{*}Z_{1} = \sqrt{2}\partial y_{1}$, $F_{*}Z_{2} = \partial y_{2}$, $F_{*}Z_{3} = \partial y_{3}$ and $F_{*}Z_{4} = \partial y_{4}$ such that
$$
g_{1}(Z_{1}, Z_{1}) = 2 = g_{2}(F_{*}Z_{1}, F_{*}Z_{1}),\quad g_{1}(Z_{2}, Z_{2}) = 1 = g_{2}(F_{*}Z_{2}, F_{*}Z_{2}),
$$
$$
g_{1}(Z_{3}, Z_{3}) = 1 = g_{2}(F_{*}Z_{3}, F_{*}Z_{3}),\quad g_{1}(Z_{4}, Z_{4}) = 1 = g_{2}(F_{*}Z_{4}, F_{*}Z_{4}).
$$
Hence, $F$ is a proper generic Riemannian submersion from an almost Hermitian manifold onto a Riemannian manifold.
\end{example}
\begin{example}
Every semi-invariant submersion \cite{Sahin11} is a generic Riemannian submersion with a totally real distribution $\mathcal{D}^{\bot}$.
\end{example}
\begin{example}
Every slant submersion \cite{Sahin11a} is a generic Riemannian submersion with $\mathcal{D} = \{0\}$ and slant distribution $\mathcal{D}^{\bot}$.
\end{example}
\begin{example}
Every semi-slant submersion \cite{park} is a generic Riemannian submersion with slant distribution $\mathcal{D}^{\bot}$.
\end{example}
\noindent Next, from (\ref{eq:7a})--(\ref{eq:9}), we get the following lemma immediately for later use.
\begin{lemma}\label{lem:4}
Let $F: (M, g_{M}, J)\rightarrow (B, g_{B})$ be a generic Riemannian submersion from an almost Hermitian manifold onto a Riemannian manifold. Then
\begin{itemize}
\item[(i)] $\phi\mathcal{D} = \mathcal{D}$,\quad $\phi\mathcal{D}^{\bot}\subset\mathcal{D}^{\bot}$,\quad $\mathcal{B}(ker F_{*})^{\bot} = \mathcal{D}^{\bot}$.
\item[(ii)] $\phi^{2} + \mathcal{B}\omega = - id$,\quad $\mathcal{C}^{2} + \omega\mathcal{B} = -id$,\quad $\omega\phi + \mathcal{C}\omega = 0$,\quad $\mathcal{B}\mathcal{C} + \phi\mathcal{B} = 0$.
\end{itemize}
\end{lemma}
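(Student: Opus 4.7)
My plan is to prove the lemma as a purely algebraic consequence of the decompositions (\ref{eq:7a}), (\ref{eq:8}), (\ref{eq:9}), the identity $J^{2} = -I$ on $M$, and the mutual orthogonality of the summands $\mathcal{D}$, $\mathcal{D}^{\perp}$, $\omega\mathcal{D}^{\perp}$, $\mu$; no differential geometry enters.

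I would actually prove (ii) first and then deduce (i). For (ii), apply $J$ to (\ref{eq:8}) for $U\in\Gamma(\ker F_{*})$: since $\phi U$ is vertical and $\omega U$ is horizontal, one re-applies (\ref{eq:8}) and (\ref{eq:9}) to each piece to obtain
\[
-U = J^{2}U = (\phi^{2}U + \mathcal{B}\omega U) + (\omega\phi U + \mathcal{C}\omega U),
\]
where the first bracket is vertical and the second horizontal. Separating components yields $\phi^{2}+\mathcal{B}\omega = -id$ and $\omega\phi + \mathcal{C}\omega = 0$. The same procedure applied to (\ref{eq:9}) for a horizontal $X$,
\[
-X = J^{2}X = (\phi\mathcal{B}X + \mathcal{B}\mathcal{C}X) + (\omega\mathcal{B}X + \mathcal{C}^{2}X),
\]
provides $\mathcal{C}^{2}+\omega\mathcal{B} = -id$ and $\phi\mathcal{B}+\mathcal{B}\mathcal{C} = 0$.

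For (i), the equality $\phi\mathcal{D} = \mathcal{D}$ is immediate from $J\mathcal{D} = \mathcal{D}$ in (\ref{eq:7}): for $U\in\mathcal{D}$, $JU\in\mathcal{D}$ is vertical, so $\omega U = 0$ and $\phi U = JU\in\mathcal{D}$; the bijectivity of $J|_{\mathcal{D}}$ gives the reverse inclusion. For $\phi\mathcal{D}^{\perp}\subset\mathcal{D}^{\perp}$ I would test against $V\in\mathcal{D}$:
\[
g_{M}(\phi U,V) = g_{M}(JU,V)-g_{M}(\omega U,V) = -g_{M}(U,JV) = 0
\]
for $U\in\mathcal{D}^{\perp}$, since $\omega U$ is horizontal while $V$ is vertical and $JV\in\mathcal{D}\perp U$. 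The containment $\mathcal{B}(\ker F_{*})^{\perp}\subset\mathcal{D}^{\perp}$ is built into (\ref{eq:9}); for the reverse it suffices to show $\mathcal{B}\omega\colon\mathcal{D}^{\perp}\to\mathcal{D}^{\perp}$ is surjective, which by (ii) reduces to invertibility of $id+\phi^{2}$ on $\mathcal{D}^{\perp}$.

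The only delicate point is this last invertibility, and it is where I expect the main obstacle to hide. I would argue that $\phi$ is skew-symmetric on $\mathcal{D}^{\perp}$ (a one-line check from $\omega U$ being horizontal), so $\phi^{2}V = -V$ would give $|\phi V|^{2} = -g_{M}(\phi^{2}V,V) = |V|^{2}$; combining with $|JV|^{2} = |\phi V|^{2}+|\omega V|^{2} = |V|^{2}$ forces $\omega V = 0$, hence $JV = \phi V\in\mathcal{D}^{\perp}$. This contradicts the purely real condition $\mathcal{D}^{\perp}\cap J\mathcal{D}^{\perp} = \{0\}$ unless $V=0$, so $id+\phi^{2}$ is invertible on $\mathcal{D}^{\perp}$ and the surjectivity follows. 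Everything else is bookkeeping on the orthogonal summands.
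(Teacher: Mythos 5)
Your proof is correct, and it follows the same route the paper intends: the paper states this lemma without proof (``from (\ref{eq:7a})--(\ref{eq:9}), we get the following lemma immediately''), and your argument is exactly the algebraic verification that claim presupposes --- decomposing $J^{2}=-I$ into vertical and horizontal parts for (ii) and using $J\mathcal{D}=\mathcal{D}$ plus orthogonality for (i). You are also right that the one genuinely non-immediate point is the surjectivity $\mathcal{B}(\ker F_{*})^{\perp}=\mathcal{D}^{\perp}$ (it fails if $\mathcal{D}^{\perp}$ contained an invariant subspace), and your invertibility argument for $id+\phi^{2}$ on $\mathcal{D}^{\perp}$ via the purely real condition $\mathcal{D}^{\perp}\cap J\mathcal{D}^{\perp}=\{0\}$ correctly supplies the detail the paper leaves unstated.
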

\noindent For any arbitrary tangent vector fields $U$ and $V$ on $M$, we set
\begin{eqnarray}\label{eq:10}
(\nabla_{U}J)V = \mathcal{P}_{U}V + \mathcal{Q}_{U}V,
\end{eqnarray}
where $\mathcal{P}_{U}V$ (respectively, $\mathcal{Q}_{U}V$) denotes the horizontal (respectively, vertical) part of $(\nabla_{U}J)V$. Clearly, if $M$ is a Kaehler manifold then $\mathcal{P} = \mathcal{Q} = 0$. and if $M$ is a nearly Kaehler manifold then $\mathcal{P}$ and $\mathcal{Q}$ satisfy
\begin{eqnarray}\label{eq:11}
\mathcal{P}_{U}V = - \mathcal{P}_{V}U,\quad \mathcal{Q}_{U}V = - \mathcal{Q}_{V}U.
\end{eqnarray}
Hence, from the Lemma~\ref{lem:2} and (\ref{eq:8}), (\ref{eq:9}), we derive covariant derivative of $\phi$, $\omega$, $\mathcal{B}$ and $\mathcal{C}$ as below:
\begin{theorem}
Let $F: (M, g_{M}, J)\rightarrow (B, g_{B})$ be a generic Riemannian submersion from a nearly Kaehler manifold onto a Riemannian manifold. Then
\begin{equation}\label{eq:11c}
(\nabla_{U}\phi)V = \mathcal{B}\mathcal{T}_{U}V - \mathcal{T}_{U}\omega V + \mathcal{Q}_{U}V,
\end{equation}
\begin{equation}\label{eq:11a}
(\nabla_{U}\omega)V = \mathcal{C}\mathcal{T}_{U}V - \mathcal{T}_{U}\phi V + \mathcal{P}_{U}V,
\end{equation}
\begin{equation*}
(\nabla_{U}\mathcal{B})X = \phi\mathcal{T}_{U}X - \mathcal{T}_{U}\mathcal{C}X + \mathcal{Q}_{U}X,
\end{equation*}
\begin{equation*}
(\nabla_{U}\mathcal{C})X = \omega\mathcal{T}_{U}X - \mathcal{T}_{U}\mathcal{B}X + \mathcal{P}_{U}X,
\end{equation*}
where
\begin{equation}\label{eq:11b}
(\nabla_{U} \phi)V ={\hat{\nabla}}_{U} \phi V - \phi {\hat{\nabla}}_{U}V,\quad (\nabla_{U}\omega)V =\mathcal{H}{\nabla}_{U}\omega V - \omega \hat{\nabla}_{U}V,
\end{equation}
\begin{equation*}
(\nabla_{U}\mathcal{B})X = \mathcal{V}\nabla_{U}\mathcal{B}X - \mathcal{B}\mathcal{H}\nabla_{U}X,\quad (\nabla_{U}\mathcal{C})X = \mathcal{H}\nabla_{U}\mathcal{C}X - \mathcal{C}\mathcal{H}\nabla_{U}X,
\end{equation*}
for any $U,V \in\Gamma(ker F_{*})$ and $X\in\Gamma(ker F_{*})^{\bot}$.
\end{theorem}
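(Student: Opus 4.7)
The plan is to derive all four identities by the same mechanism: differentiate the defining decompositions $JV=\phi V+\omega V$ (for $V$ vertical) and $JX=\mathcal{B}X+\mathcal{C}X$ (for $X$ horizontal) along a vertical vector $U$, use the nearly Kaehler decomposition (\ref{eq:10}) of $(\nabla_{U}J)$, expand every $\nabla_{U}(\cdot)$ by the relevant case of Lemma~\ref{lem:2}, and finally project the resulting identity onto the vertical and horizontal parts of $TM$. Each differentiated equation will then yield two of the four required formulas.

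In detail, for the first two identities I would start from $J V=\phi V+\omega V$ and write
\begin{equation*}
(\nabla_{U}J)V+J\nabla_{U}V=\nabla_{U}\phi V+\nabla_{U}\omega V.
\end{equation*}
On the left, I substitute $(\nabla_{U}J)V=\mathcal{P}_{U}V+\mathcal{Q}_{U}V$ and, using Lemma~\ref{lem:2}(i), write $\nabla_{U}V=\mathcal{T}_{U}V+\hat{\nabla}_{U}V$, then apply (\ref{eq:8})--(\ref{eq:9}) so that $J\mathcal{T}_{U}V=\mathcal{B}\mathcal{T}_{U}V+\mathcal{C}\mathcal{T}_{U}V$ and $J\hat{\nabla}_{U}V=\phi\hat{\nabla}_{U}V+\omega\hat{\nabla}_{U}V$. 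On the right, $\phi V$ is vertical so Lemma~\ref{lem:2}(i) gives $\nabla_{U}\phi V=\mathcal{T}_{U}\phi V+\hat{\nabla}_{U}\phi V$, while $\omega V$ is horizontal so Lemma~\ref{lem:2}(ii) gives $\nabla_{U}\omega V=\mathcal{T}_{U}\omega V+\mathcal{H}\nabla_{U}\omega V$. Collecting the vertical components and using the definition of $(\nabla_{U}\phi)V$ in (\ref{eq:11b}) yields (\ref{eq:11c}); collecting the horizontal components and using the definition of $(\nabla_{U}\omega)V$ yields (\ref{eq:11a}).

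For the last two identities I would repeat the procedure with $JX=\mathcal{B}X+\mathcal{C}X$, expanding $\nabla_{U}X=\mathcal{H}\nabla_{U}X+\mathcal{T}_{U}X$ by Lemma~\ref{lem:2}(ii), $\nabla_{U}\mathcal{B}X$ by Lemma~\ref{lem:2}(i) (since $\mathcal{B}X\in\Gamma(\mathcal{D}^{\bot})$ is vertical), and $\nabla_{U}\mathcal{C}X$ by Lemma~\ref{lem:2}(ii) (since $\mathcal{C}X\in\Gamma(\mu)$ is horizontal). Substituting $J$ via (\ref{eq:8}), (\ref{eq:9}) on every vector in sight and then separating vertical and horizontal parts will give $(\nabla_{U}\mathcal{B})X$ and $(\nabla_{U}\mathcal{C})X$ respectively, after invoking the definitions displayed below the theorem.

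The calculation is essentially bookkeeping; the only place one must be careful is in correctly classifying each term as vertical or horizontal before projecting, in particular remembering that $\mathcal{T}_{U}V$ is horizontal when both arguments are vertical but $\mathcal{T}_{U}X$ is vertical when $X$ is horizontal, and that the nearly Kaehler hypothesis enters only through the admissibility of the clean splitting $(\nabla_{U}J)V=\mathcal{P}_{U}V+\mathcal{Q}_{U}V$ with the skew-symmetry (\ref{eq:11}) (which, incidentally, is not needed for the four formulas themselves but will be used later). I expect no serious obstacle; the main task is to make sure the bookkeeping of $\mathcal{V}/\mathcal{H}$ components matches the definitions of the covariant derivatives in (\ref{eq:11b}) exactly.
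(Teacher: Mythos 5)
Your proposal is correct and is exactly the computation the paper has in mind: it states just before the theorem that the formulas follow "from the Lemma~\ref{lem:2} and (\ref{eq:8}), (\ref{eq:9})", i.e.\ by differentiating $JV=\phi V+\omega V$ and $JX=\mathcal{B}X+\mathcal{C}X$, inserting the $\mathcal{P},\mathcal{Q}$ splitting of $\nabla J$, and separating vertical and horizontal parts. Your bookkeeping of which terms are vertical versus horizontal is right, and your side remark that the nearly Kaehler condition is not actually used in these four identities is also accurate.
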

\begin{lemma}
Let $F: (M, g_{M}, J)\rightarrow (B, g_{B})$ be a generic Riemannian submersion from a nearly Kaehler manifold onto a Riemannian manifold. Then
\begin{eqnarray}\label{eq:12}
\left\{
\begin{array}{c}
  \mathcal{B}\mathcal{T}_{U}V + \phi\mathcal{V}\nabla_{U}V  = \mathcal{V}\nabla_{U}\phi{V} + \mathcal{T}_{U}\omega{V} + \mathcal{Q}_{V}U,\\
\mathcal{C}\mathcal{T}_{U}V + \omega\mathcal{V}\nabla_{U}V = \mathcal{T}_{U}\phi V + \mathcal{H}\nabla_{U}\omega V + \mathcal{P}_{V}U,
\end{array}
\right.
\end{eqnarray}
\begin{eqnarray}\label{eq:13}
\left\{
\begin{array}{c}
\mathcal{B}\mathcal{H}\nabla_{X}Y + \phi\mathcal{A}_{X}Y  = \mathcal{V}\nabla_{X}\mathcal{B}Y + \mathcal{A}_{X}\mathcal{C}Y + \mathcal{Q}_{Y}X,\\
\mathcal{C}\mathcal{H}\nabla_{X}Y + \omega\mathcal{A}_{X}Y = \mathcal{A}_{X}\mathcal{B}Y + \mathcal{H}\nabla_{X}\mathcal{C}Y + \mathcal{P}_{Y}X,
\end{array}
\right.
\end{eqnarray}
\begin{eqnarray}\label{eq:14}
\left\{
\begin{array}{c}
\mathcal{B}\mathcal{A}_{X}V + \phi\mathcal{V}\nabla_{X}V  = \mathcal{V}\nabla_{X}\phi{V} + \mathcal{A}_{X}\omega{V} + \mathcal{Q}_{V}X,\\
\mathcal{C}\mathcal{A}_{X}V + \omega\mathcal{V}\nabla_{X}V = \mathcal{A}_{X}\phi V + \mathcal{H}\nabla_{X}\omega V + \mathcal{P}_{V}X,
\end{array}
\right.
\end{eqnarray}
for any $X, Y\in\Gamma(ker F_{*})^{\perp}$ and $U, V\in\Gamma(ker F_{*})$.
\end{lemma}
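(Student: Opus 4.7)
The plan is to derive all six component identities in (\ref{eq:12})--(\ref{eq:14}) by a single uniform mechanism: expand the identity $\nabla_{U}JV = J\nabla_{U}V + (\nabla_{U}J)V$ in the three cases (both arguments vertical; both arguments horizontal; mixed), use Lemma~\ref{lem:2} to split each side into its vertical and horizontal pieces through the O'Neill tensors $\mathcal{T}$ and $\mathcal{A}$, decompose the $J$-images via (\ref{eq:8})--(\ref{eq:9}), and then invoke the nearly Kaehler skew-symmetry (\ref{eq:1}) together with (\ref{eq:10}) to rewrite $(\nabla_{U}J)V = -(\nabla_{V}J)U = -\mathcal{P}_{V}U-\mathcal{Q}_{V}U$. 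This last step explains why $\mathcal{P}_{V}U$ and $\mathcal{Q}_{V}U$ (not $\mathcal{P}_{U}V$, $\mathcal{Q}_{U}V$) appear on the right-hand sides of the statement.

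For (\ref{eq:12}), take $U,V\in\Gamma(\ker F_{*})$ and write $JV=\phi V+\omega V$. Since $\phi V$ is vertical and $\omega V$ horizontal, Lemma~\ref{lem:2}(i)--(ii) gives $\nabla_{U}JV = \hat\nabla_{U}\phi V + \mathcal{T}_{U}\phi V + \mathcal{H}\nabla_{U}\omega V + \mathcal{T}_{U}\omega V$. For the other side, $\nabla_{U}V=\hat\nabla_{U}V+\mathcal{T}_{U}V$ with $\hat\nabla_{U}V$ vertical and $\mathcal{T}_{U}V$ horizontal (because $\mathcal{T}$ reverses the type of its second argument), so applying $J$ and using (\ref{eq:8}), (\ref{eq:9}) yields $J\nabla_{U}V = \phi\hat\nabla_{U}V + \omega\hat\nabla_{U}V + \mathcal{B}\mathcal{T}_{U}V + \mathcal{C}\mathcal{T}_{U}V$. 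Substituting these into $\nabla_{U}JV - J\nabla_{U}V = -\mathcal{P}_{V}U - \mathcal{Q}_{V}U$, then separating the vertical part produces the first line of (\ref{eq:12}) and the horizontal part the second, using also that $\hat\nabla_{U}V=\mathcal{V}\nabla_{U}V$.

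Equations (\ref{eq:13}) and (\ref{eq:14}) are handled identically but with the remaining parts of Lemma~\ref{lem:2}. For (\ref{eq:13}) both $X,Y$ are horizontal, so one decomposes $JY=\mathcal{B}Y+\mathcal{C}Y$ (with $\mathcal{B}Y$ vertical, $\mathcal{C}Y$ horizontal) and applies parts (iii)--(iv) to $\nabla_{X}\mathcal{B}Y$ and $\nabla_{X}\mathcal{C}Y$, while on the other side $\nabla_{X}Y=\mathcal{H}\nabla_{X}Y+\mathcal{A}_{X}Y$ has $\mathcal{A}_{X}Y$ vertical and horizontal part needing only (\ref{eq:9}). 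For (\ref{eq:14}) with $X$ horizontal and $V$ vertical, one combines Lemma~\ref{lem:2}(iii)--(iv) applied to $\nabla_{X}\phi V$ and $\nabla_{X}\omega V$ with the decomposition of $\nabla_{X}V=\mathcal{A}_{X}V+\mathcal{V}\nabla_{X}V$, noting that $\mathcal{A}_{X}V$ is horizontal and $\mathcal{V}\nabla_{X}V$ vertical. In each case the vector equation $\nabla_{X}J(\cdot)-J\nabla_{X}(\cdot)=-\mathcal{P}_{(\cdot)}X-\mathcal{Q}_{(\cdot)}X$ splits into two scalar identities by projecting onto $\mathcal{V}$ and $\mathcal{H}$.

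The only real point of care is bookkeeping of the horizontal versus vertical type of each summand produced by $\mathcal{T}$ and $\mathcal{A}$ (in particular, that $\mathcal{T}_{U}V$ is horizontal while $\mathcal{T}_{U}X$ is vertical, and analogously for $\mathcal{A}$), together with the correct use of the nearly Kaehler skew-symmetry to convert $(\nabla_{U}J)V$ into $-(\nabla_{V}J)U$. Once these are tracked uniformly, each of the six identities drops out immediately from matching horizontal and vertical parts of a single vector equation; there is no genuine obstacle beyond routine expansion.
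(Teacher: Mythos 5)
Your proposal is correct and follows essentially the same route as the paper: rewrite $(\nabla_{U}J)V$ as $-(\nabla_{V}J)U=-(\mathcal{P}_{V}U+\mathcal{Q}_{V}U)$ via the nearly Kaehler condition (\ref{eq:1}), expand $\nabla JV$ and $J\nabla V$ through Lemma~\ref{lem:2} and the decompositions (\ref{eq:8})--(\ref{eq:9}), and compare vertical and horizontal components in each of the three cases. The type bookkeeping for $\mathcal{T}$ and $\mathcal{A}$ that you flag is exactly the content of the paper's (implicit) computation, and your component identities match (\ref{eq:12})--(\ref{eq:14}) exactly.
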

\begin{proof}
Let $U, V\in\Gamma(ker F_{*})$ then from (\ref{eq:1}), we have $J\nabla_{U}V = \nabla_{U}JV + (\nabla_{V}J)U$. Using the Lemma~\ref{lem:2} with (\ref{eq:8})--(\ref{eq:10}), we get
\begin{eqnarray*}
&&\mathcal{B}\mathcal{T}_{U}V + \mathcal{C}\mathcal{T}_{U}V + \phi\mathcal{V}\nabla_{U}V + \omega\mathcal{V}\nabla_{U}V = \mathcal{T}_{U}\phi{V} + \mathcal{V}\nabla_{U}\phi{V}\nonumber\\&&
+ \mathcal{H}(\nabla_{U}\omega{V}) + \mathcal{T}_{U}\omega{V} + \mathcal{P}_{V}U + \mathcal{Q}_{V}U.
\end{eqnarray*}
On comparing the vertical and horizontal parts of the last expression, we obtain (\ref{eq:12}). Analogously, for $X, Y\in\Gamma(ker F_{*})^{\bot}$ and $U\in\Gamma(ker F_{*})$, we derive (\ref{eq:13}) and (\ref{eq:14}).
\end{proof}
\begin{corollary}
Take $U\in\Gamma(ker F_{*})$, $V\in\Gamma(\mathcal{D})$ and $\xi\in\Gamma(\mu)$ in $(\ref{eq:12})_{2}$ then it follows that
$$
g_{M}(\mathcal{C}\mathcal{T}_{U}V, \xi) = g_{M}(\mathcal{T}_{U}\phi V, \xi) + g_{M}(\mathcal{P}_{V}U,  \xi),
$$
this, further implies that
$$
g_{M}((\mathcal{T}_{U}J)V, \xi) = g_{M}(\mathcal{P}_{U}V,  \xi).
$$
Similarly, for $X\in\Gamma(ker F_{*})^{\bot}$, $Y\in\Gamma(\mu)$ and $U\in\Gamma(\mathcal{D})$, from $(\ref{eq:13})_{1}$, we derive
$$
g_{M}((\mathcal{A}_{X}J)Y, U) = g_{M}(\mathcal{Q}_{X}Y, U).
$$
For $X\in\Gamma(ker F_{*})^{\bot}$, $Y, Z\in\Gamma(\omega\mathcal{D}^{\bot})$, from $(\ref{eq:13})_{2}$, we obtain
$$
g_{M}((\mathcal{A}_{X}J)Y, Z) = g_{M}(\mathcal{P}_{X}Y, Z).
$$
For $X\in\Gamma(ker F_{*})^{\bot}$, $Y\in\Gamma(\mu)$ and $V\in\Gamma(\mathcal{D})$, from $(\ref{eq:14})_{2}$, we get
$$
g_{M}((\mathcal{A}_{X}J)V, Y) = g_{M}(\mathcal{P}_{X}V, Y).
$$
\end{corollary}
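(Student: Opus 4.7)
All five identities are obtained from equations (\ref{eq:12})--(\ref{eq:14}) by the same two-step recipe. First, I pair the appropriate displayed equation with a test vector lying in one of the summands of the decompositions $(ker F_*) = \mathcal{D} \oplus \mathcal{D}^\perp$ and $(ker F_*)^\perp = \omega\mathcal{D}^\perp \oplus \mu$, then exploit the orthogonality of these summands together with the structural relations from Lemma~\ref{lem:4} (in particular $\omega\mathcal{D} = 0$, $\phi\mathcal{D} = \mathcal{D}$, $\phi\mathcal{D}^\perp \subset \mathcal{D}^\perp$, $\mathcal{B}(ker F_*)^\perp = \mathcal{D}^\perp$) to kill several of the terms. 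Second, I recognize what remains as a piece of $(\mathcal{T}_U J)V = \mathcal{T}_U JV - J\mathcal{T}_U V$ or $(\mathcal{A}_X J)Y = \mathcal{A}_X JY - J\mathcal{A}_X Y$, using the expansions $J = \phi + \omega$ on vertical vectors and $J = \mathcal{B} + \mathcal{C}$ on horizontal ones, and finally invoke the antisymmetry (\ref{eq:11}) of $\mathcal{P}$ and $\mathcal{Q}$.

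For the first identity I would pair $(\ref{eq:12})_2$ with $\xi \in \mu$ under the hypothesis $V \in \mathcal{D}$: the summand $\mathcal{H}\nabla_U \omega V$ vanishes because $\omega V = 0$, and $g_M(\omega\mathcal{V}\nabla_U V, \xi) = 0$ because $\omega$ maps $(ker F_*)$ into $\omega\mathcal{D}^\perp$, which is orthogonal to $\mu$. The intermediate identity is then read off directly. To promote it to the stated assertion for $(\mathcal{T}_U J)V$, I substitute $JV = \phi V$ and expand $J\mathcal{T}_U V = \mathcal{B}\mathcal{T}_U V + \mathcal{C}\mathcal{T}_U V$, using that $\mathcal{T}_U V$ is horizontal; since $\mathcal{B}\mathcal{T}_U V \in \mathcal{D}^\perp$ is vertical, its pairing with $\xi$ drops out, leaving exactly $-g_M(\mathcal{P}_V U, \xi)$, and flipping the sign via (\ref{eq:11}) yields the conclusion.

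The remaining three identities follow the same template with small adjustments. For $(\ref{eq:13})_1$ with $Y \in \mu$ and test vector $U \in \mathcal{D}$, the relations $\mathcal{B}Y = 0$ and $\mathcal{D}^\perp \perp \mathcal{D}$ provide the vanishings; since $\mathcal{A}_X Y$ is vertical for $X, Y$ horizontal, $J\mathcal{A}_X Y$ is expanded through $\phi + \omega$. For $(\ref{eq:13})_2$ paired with $Z \in \omega\mathcal{D}^\perp$, the vanishings come from noting that $\phi\mathcal{A}_X Y$ is vertical and $\mathcal{A}_X \mathcal{C}Y = \mathcal{V}\nabla_X \mathcal{C}Y$ is also vertical, so their pairings with the horizontal $Z$ die. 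For $(\ref{eq:14})_2$ with $V \in \mathcal{D}$ and test vector $Y \in \mu$, the key inputs are $\omega V = 0$ and $\phi V \in \mathcal{D}$, together with the fact that $\mathcal{A}_X V$ is horizontal, so $J\mathcal{A}_X V$ is expanded via $\mathcal{B} + \mathcal{C}$, and the $\mathcal{B}$-piece is vertical and hence drops.

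The main obstacle I expect is bookkeeping rather than any genuine conceptual difficulty: the four subbundles $\mathcal{D}, \mathcal{D}^\perp, \mu, \omega\mathcal{D}^\perp$ interact nontrivially through the operators $\phi, \omega, \mathcal{B}, \mathcal{C}$, and I must track at every step which orthogonality relation and which algebraic identity from Lemma~\ref{lem:4} is being used, as well as whether $\mathcal{T}$ or $\mathcal{A}$ is currently producing a vertical or a horizontal output. Once these are tabulated, the four proofs write themselves.
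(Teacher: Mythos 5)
Your proposal is correct and follows exactly the route the paper intends: the corollary carries no separate proof, being obtained precisely by pairing $(\ref{eq:12})_{2}$, $(\ref{eq:13})_{1}$, $(\ref{eq:13})_{2}$ and $(\ref{eq:14})_{2}$ with the indicated test vectors, discarding terms via the mutual orthogonality of $\mathcal{D}$, $\mathcal{D}^{\bot}$, $\omega\mathcal{D}^{\bot}$, $\mu$ and the verticality/horizontality of the outputs of $\mathcal{T}$ and $\mathcal{A}$, and then rewriting the remainder as $(\mathcal{T}_{U}J)V$ or $(\mathcal{A}_{X}J)Y$ and flipping the sign of $\mathcal{P}$, $\mathcal{Q}$ by (\ref{eq:11}). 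The one spot where your bookkeeping is slightly short is the third identity, where besides the vertical terms you list one must also check that $g_{M}(\mathcal{C}\mathcal{H}\nabla_{X}Y, Z)$ and $g_{M}(\mathcal{H}\nabla_{X}\mathcal{C}Y, Z)$ vanish for $Z\in\Gamma(\omega\mathcal{D}^{\bot})$, which rests on $\mathcal{C}$ taking values in $\mu$ as declared in (\ref{eq:9}).
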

\begin{theorem}\label{thm:1}
Let $F: (M, g_{M}, J)\rightarrow (B, g_{B})$ be a generic Riemannian submersion from a nearly Kaehler manifold onto a Riemannian manifold. Then the distribution $\mathcal{D}$ defines a totally geodesic foliation in $M$ if and only if
\begin{eqnarray*}
\mathcal{V}\nabla_{U}\phi V - \mathcal{Q}_{U}V\in\Gamma(\mathcal{D}),\quad \mathcal{T}_{U}\phi V - \mathcal{P}_{U}V\in\Gamma(\mu),
\end{eqnarray*}
for any $U, V\in\Gamma(\mathcal{D})$.
\end{theorem}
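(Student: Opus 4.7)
The plan is to derive the characterization directly from the ambient identities in equation~(\ref{eq:12}), specialized to $V\in\Gamma(\mathcal{D})$. Since $\phi V\in\Gamma(\mathcal{D})$ and $\omega V=0$ for such $V$, and since the nearly Kaehler symmetries in~(\ref{eq:11}) give $\mathcal{P}_V U=-\mathcal{P}_U V$ and $\mathcal{Q}_V U=-\mathcal{Q}_U V$, the pair~(\ref{eq:12}) collapses to
\begin{align*}
\mathcal{B}\mathcal{T}_U V+\phi\,\mathcal{V}\nabla_U V &= \mathcal{V}\nabla_U\phi V-\mathcal{Q}_U V,\\
\mathcal{C}\mathcal{T}_U V+\omega\,\mathcal{V}\nabla_U V &= \mathcal{T}_U\phi V-\mathcal{P}_U V.
\end{align*}
These identities provide the exact bridge between the geometric condition $\nabla_U V\in\Gamma(\mathcal{D})$ for a totally geodesic foliation and the algebraic conditions stated in the theorem, so the proof reduces to a careful comparison of components in the decomposition $TM=\mathcal{D}\oplus\mathcal{D}^\perp\oplus\omega\mathcal{D}^\perp\oplus\mu$.

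For the necessity direction I would substitute $\mathcal{T}_U V=0$ and $\mathcal{V}\nabla_U V\in\Gamma(\mathcal{D})$, both consequences of $\nabla_U V\in\Gamma(\mathcal{D})$, into the two identities. The first then becomes $\mathcal{V}\nabla_U\phi V-\mathcal{Q}_U V=\phi\,\mathcal{V}\nabla_U V\in\phi\mathcal{D}=\mathcal{D}$ by Lemma~\ref{lem:4}(i), and the second becomes $\mathcal{T}_U\phi V-\mathcal{P}_U V=0\in\Gamma(\mu)$, delivering both of the stated conditions at once.

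For sufficiency, I would assume both conditions and reverse the flow. The RHS of the second identity sits in $\mu$, while its LHS splits as $\mathcal{C}\mathcal{T}_U V\in\mu$ plus $\omega\,\mathcal{V}\nabla_U V\in\omega\mathcal{D}^\perp$; the orthogonal decomposition $\mu\oplus\omega\mathcal{D}^\perp$ therefore forces $\omega\,\mathcal{V}\nabla_U V=0$, which by injectivity of $\omega$ on $\mathcal{D}^\perp$ gives $\mathcal{V}\nabla_U V\in\Gamma(\mathcal{D})$. Substituting this back into the first identity puts $\phi\,\mathcal{V}\nabla_U V$ in $\mathcal{D}$, so the first hypothesis traps $\mathcal{B}\mathcal{T}_U V$ in $\mathcal{D}\cap\mathcal{D}^\perp=\{0\}$, and from Lemma~\ref{lem:4}(ii) the operator $\mathcal{B}\omega=-(\mathrm{id}+\phi^2)$ is injective on the purely real $\mathcal{D}^\perp$ (the eigenvalue $-1$ of $\phi^2$ would force a nonzero vector of $\mathcal{D}^\perp\cap J\mathcal{D}^\perp$), so the vanishing of $\mathcal{T}_U V$ on its $\omega\mathcal{D}^\perp$-component follows, completing $\nabla_U V\in\Gamma(\mathcal{D})$. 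The step I expect to be the most delicate is tracking the $\mu$-component of $\mathcal{T}_U V$, which the two displayed identities do not pin down by the purely algebraic projection argument above; I plan to handle it by combining the residual equation $\mathcal{C}\mathcal{T}_U V=\mathcal{T}_U\phi V-\mathcal{P}_U V$ with the corollary $g_M((\mathcal{T}_U J)V,\xi)=g_M(\mathcal{P}_U V,\xi)$ for $\xi\in\Gamma(\mu)$ stated just before the theorem.
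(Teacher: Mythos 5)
Your necessity argument and the first two stages of sufficiency are correct, and they are essentially the paper's own computation written tensorially: the paper writes $\nabla_{U}V=-J(\mathcal{T}_{U}\phi V+\mathcal{V}\nabla_{U}\phi V+\mathcal{P}_{V}U+\mathcal{Q}_{V}U)$ and pairs with $W\in\Gamma(\mathcal{D}^{\bot})$ to get (\ref{eq:15}), which is exactly the content of your two identities obtained by specializing (\ref{eq:12}). The genuine gap is the step you flagged, and the repair you propose cannot work. The corollary $g_{M}((\mathcal{T}_{U}J)V,\xi)=g_{M}(\mathcal{P}_{U}V,\xi)$, $\xi\in\Gamma(\mu)$, is nothing but the $\mu$-pairing of the residual equation $\mathcal{C}\mathcal{T}_{U}V=\mathcal{T}_{U}\phi V-\mathcal{P}_{U}V$ you already hold (pair with $\xi$, note $g_{M}(\mathcal{C}\mathcal{T}_{U}V,\xi)=g_{M}(J\mathcal{T}_{U}V,\xi)$ and $\mathcal{T}_{U}\phi V=\mathcal{T}_{U}JV$ for $V\in\Gamma(\mathcal{D})$), so combining the two is circular. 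Iterating the residual equation with $V\mapsto\phi V$ and using $\mathcal{C}^{2}=-\mathrm{id}$ on $\mu$ only returns the constraint $\mathcal{P}_{U}\phi V+\mathcal{C}\mathcal{P}_{U}V=0$ on $\mathcal{P}$, and says nothing about $\mathcal{T}_{U}V$. The hypotheses genuinely do not control the $\mu$-component of $\mathcal{T}_{U}V$: already in the Kaehler case ($\mathcal{P}=\mathcal{Q}=0$) the second hypothesis reduces to $\mathcal{T}_{U}\phi V\in\Gamma(\mu)$, i.e.\ to $\mathcal{T}|_{\mathcal{D}\times\mathcal{D}}$ being $\mu$-valued, which is perfectly compatible with non-totally-geodesic fibers (e.g.\ a holomorphic submersion, where $\mathcal{D}^{\bot}=\{0\}$, $\mu=(\ker F_{*})^{\bot}$, the conditions hold automatically, yet the fibers are merely minimal).

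The source of the trouble is your literal reading of ``totally geodesic foliation in $M$'' as $\nabla_{U}V\in\Gamma(\mathcal{D})$. The paper's proof only verifies $g_{M}(\nabla_{U}V,W)=0$ for $W\in\Gamma(\mathcal{D}^{\bot})$, i.e.\ it characterizes when the leaves of $\mathcal{D}$ are totally geodesic \emph{in the fibers} ($\hat{\nabla}_{U}V\in\Gamma(\mathcal{D})$), which is also all that the subsequent product-structure Theorem~\ref{thm:3} requires. Under that reading your sufficiency argument is already complete the moment you deduce $\omega\mathcal{V}\nabla_{U}V=0$, hence $\mathcal{V}\nabla_{U}V\in\Gamma(\mathcal{D})$, and your write-up is in fact more careful than the paper's, which does not explain how the vanishing of the single sum in (\ref{eq:15}) for all $W$ separates into the two stated conditions (your necessity argument derives each condition directly from $\nabla_{U}V\in\Gamma(\mathcal{D})$, avoiding that issue). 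So: drop the entire last portion about $\mathcal{B}\mathcal{T}_{U}V$ and the $\mu$-component and state the conclusion as ``totally geodesic in the fibers''; if you insist on totally geodesic in $M$, the equivalence as stated cannot be proved.
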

\begin{proof}
Let $U, V\in\Gamma(\mathcal{D})$ then using the Lemma~\ref{lem:2} with (\ref{eq:1}) and (\ref{eq:10}), we derive
$$
\nabla_{U}V = - J(\mathcal{T}_{U}\phi V + \mathcal{V}\nabla_{U}\phi V + \mathcal{P}_{V}U + \mathcal{Q}_{V}U).
$$
Take scalar product of last expression with $W\in\Gamma(\mathcal{D}^{\bot})$, we get
\begin{equation}\label{eq:15}
g_{M}(\nabla_{U}V, W) = g_{M}(\mathcal{V}\nabla_{U}\phi V - \mathcal{Q}_{U}V, \phi W) + g_{M}(\mathcal{T}_{U}\phi V - \mathcal{P}_{U}V, \omega W),
\end{equation}
hence the proof is complete.
\end{proof}
On interchanging the role of $U$, $V$ and subtracting the resulting expression form (\ref{eq:15}) and using (\ref{eq:11}), we obtain the following observation immediately.
\begin{corollary}
Let $F: (M, g_{M}, J)\rightarrow (B, g_{B})$ be a generic Riemannian submersion from a nearly Kaehler manifold onto a Riemannian manifold. Then the distribution $\mathcal{D}$ is integrable if and only if
\begin{equation*}
\mathcal{V}\nabla_{U}\phi V - \mathcal{V}\nabla_{V}\phi U - 2\mathcal{Q}_{U}V\in\Gamma(\mathcal{D}),\quad \mathcal{T}_{U}\phi V - \mathcal{T}_{V}\phi U - 2\mathcal{P}_{U}V\in\Gamma(\mu),
\end{equation*}
for any $U, V\in\Gamma(\mathcal{D})$.
\end{corollary}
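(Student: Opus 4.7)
The plan is to reduce the integrability question to a bilinear identity by applying equation~(\ref{eq:15}) twice with the roles of $U$ and $V$ exchanged and subtracting. Since $\ker F_{*}$ is integrable, $[U,V]$ is automatically vertical for any $U,V\in\Gamma(\mathcal{D})$, so integrability of $\mathcal{D}$ amounts to the single scalar condition $g_M([U,V],W)=0$ for every $W\in\Gamma(\mathcal{D}^{\bot})$.

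First I would write (\ref{eq:15}) once as stated and once with $U$ and $V$ interchanged, and subtract, using $\nabla_{U}V-\nabla_{V}U=[U,V]$ on the left side. This produces
\begin{equation*}
g_M([U,V],W)=g_M\bigl(\mathcal{V}\nabla_U\phi V-\mathcal{V}\nabla_V\phi U-\mathcal{Q}_UV+\mathcal{Q}_VU,\phi W\bigr)+g_M\bigl(\mathcal{T}_U\phi V-\mathcal{T}_V\phi U-\mathcal{P}_UV+\mathcal{P}_VU,\omega W\bigr).
\end{equation*}
Next I would invoke the nearly Kaehler skew-symmetry (\ref{eq:11}), which gives $\mathcal{Q}_VU=-\mathcal{Q}_UV$ and $\mathcal{P}_VU=-\mathcal{P}_UV$, so the $\mathcal{Q}$- and $\mathcal{P}$-contributions combine into $-2\mathcal{Q}_UV$ and $-2\mathcal{P}_UV$ respectively. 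At this point the right-hand side has exactly the two expressions appearing in the statement.

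Finally I would argue, exactly as in the preceding theorem, that the two summands can be treated independently because $\phi W\in\Gamma(\mathcal{D}^{\bot})$ (by Lemma~\ref{lem:4}(i)) while $\omega W\in\Gamma(\omega\mathcal{D}^{\bot})\subset\Gamma(\ker F_{*})^{\bot}$, so that one contributes a vertical inner product and the other a horizontal one. Vanishing of the first for all such $W$ is equivalent to $\mathcal{V}\nabla_U\phi V-\mathcal{V}\nabla_V\phi U-2\mathcal{Q}_UV$ having no component in $\mathcal{D}^{\bot}$, i.e.\ lying in $\mathcal{D}$, and vanishing of the second is equivalent to $\mathcal{T}_U\phi V-\mathcal{T}_V\phi U-2\mathcal{P}_UV$ having no component in $\omega\mathcal{D}^{\bot}$, i.e.\ lying in $\mu$.

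The only mild subtlety, and hence the main thing to be careful about, is justifying that the two conditions decouple in the last step: one must use that $\phi$ maps $\mathcal{D}^{\bot}$ into $\mathcal{D}^{\bot}$ (Lemma~\ref{lem:4}(i)) and that $\omega\mathcal{D}^{\bot}$ is precisely the complement of $\mu$ in $(\ker F_{*})^{\bot}$, so that as $W$ ranges over $\mathcal{D}^{\bot}$ the images $\phi W$ and $\omega W$ span the relevant complementary subspaces that appear in the conclusion. This is exactly the decomposition already exploited in the proof of Theorem~\ref{thm:1}, so the corollary indeed follows at once.
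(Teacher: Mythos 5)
Your proposal is correct and is exactly the paper's argument: the authors obtain this corollary by interchanging $U$ and $V$ in (\ref{eq:15}), subtracting, and applying the skew-symmetry (\ref{eq:11}) to merge the $\mathcal{P}$- and $\mathcal{Q}$-terms into the factors of $2$. Your extra remark on why the two conditions decouple (the first pairing against $\phi W\in\Gamma(\mathcal{D}^{\bot})$, the second against $\omega W\in\Gamma(\omega\mathcal{D}^{\bot})$) only makes explicit a step the paper leaves implicit in Theorem~\ref{thm:1} as well.
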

\begin{theorem}\label{thm:2}
Let $F: (M, g_{M}, J)\rightarrow (B, g_{B})$ be a generic Riemannian submersion from a nearly Kaehler manifold onto a Riemannian manifold. Then the distribution $\mathcal{D}^{\bot}$ defines a totally geodesic foliation in $M$ if and only if
\begin{equation*}
\mathcal{V}\nabla_{U}\phi V + \mathcal{T}_{U}\omega V - \mathcal{Q}_{U}V\in\Gamma(\mathcal{D}^{\bot}),
\end{equation*}
for any $U, V\in\Gamma(\mathcal{D}^{\bot})$.
\end{theorem}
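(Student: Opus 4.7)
The plan is to mimic the template used in the proof of Theorem~\ref{thm:1}. Following that argument, it suffices to compute $g_M(\nabla_U V, W)$ for arbitrary $U, V \in \Gamma(\mathcal{D}^{\bot})$ and $W \in \Gamma(\mathcal{D})$, and then to rearrange the resulting expression so that its vanishing for every such $W$ translates directly into the stated membership condition on $\mathcal{V}\nabla_U \phi V + \mathcal{T}_U \omega V - \mathcal{Q}_U V$.

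The starting point is the nearly K\"ahler identity (\ref{eq:1}), which I would rewrite as $\nabla_U V = -J\bigl[\nabla_U JV + (\nabla_V J)U\bigr]$. Decomposing $JV = \phi V + \omega V$ via (\ref{eq:8}) and $(\nabla_V J)U = \mathcal{P}_V U + \mathcal{Q}_V U$ via (\ref{eq:10}), the genuinely new feature relative to Theorem~\ref{thm:1} is that $\omega V$ is no longer zero. Consequently I have to expand both $\nabla_U \phi V = \mathcal{T}_U \phi V + \mathcal{V}\nabla_U \phi V$ via Lemma~\ref{lem:2}(i) and $\nabla_U \omega V = \mathcal{H}\nabla_U \omega V + \mathcal{T}_U \omega V$ via Lemma~\ref{lem:2}(ii). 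Collecting terms, the bracket splits into a horizontal part $\mathcal{T}_U \phi V + \mathcal{H}\nabla_U \omega V + \mathcal{P}_V U$ and a vertical part $\mathcal{V}\nabla_U \phi V + \mathcal{T}_U \omega V + \mathcal{Q}_V U$.

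For $W \in \Gamma(\mathcal{D})$ one has $JW = \phi W$, which is purely vertical since $\omega\mathcal{D} = \{0\}$. Using that $J$ is an isometry (so $g_M(-J[\,\cdot\,],W)=g_M([\,\cdot\,],JW)$) and that $\phi W$ is vertical, only the vertical part of the bracket contributes to the pairing, giving
\begin{equation*}
g_M(\nabla_U V, W) = g_M\bigl(\mathcal{V}\nabla_U \phi V + \mathcal{T}_U \omega V + \mathcal{Q}_V U,\ \phi W\bigr).
\end{equation*}
Applying the skew-symmetry $\mathcal{Q}_V U = -\mathcal{Q}_U V$ from (\ref{eq:11}) and then $\phi \mathcal{D} = \mathcal{D}$ from Lemma~\ref{lem:4}(i), the vanishing of this pairing for every $W \in \Gamma(\mathcal{D})$ is equivalent to $\mathcal{V}\nabla_U \phi V + \mathcal{T}_U \omega V - \mathcal{Q}_U V$ (which is itself already vertical) being orthogonal to $\mathcal{D}$, i.e., lying in $\Gamma(\mathcal{D}^{\bot})$. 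This is precisely the stated condition, and every step is reversible, so the equivalence is an honest \emph{if and only if}.

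The only real obstacle, and the sole point where the argument departs noticeably from Theorem~\ref{thm:1}, is the careful handling of $\nabla_U \omega V$: its horizontal piece $\mathcal{H}\nabla_U \omega V$ must be correctly discarded against the vertical $\phi W$, while its vertical piece $\mathcal{T}_U \omega V$ survives to supply the extra summand appearing in the final condition. Beyond this bookkeeping, no deeper difficulty is anticipated.
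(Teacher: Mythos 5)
Your argument is correct and is essentially the paper's own proof: both start from the nearly K\"ahler identity to write $\nabla_U V = -J(\mathcal{T}_U\phi V + \mathcal{V}\nabla_U\phi V + \mathcal{H}\nabla_U\omega V + \mathcal{T}_U\omega V + \mathcal{P}_V U + \mathcal{Q}_V U)$, pair with $W\in\Gamma(\mathcal{D})$ so that only the vertical part survives against $\phi W$, and finish using $\mathcal{Q}_V U = -\mathcal{Q}_U V$ and $\phi\mathcal{D}=\mathcal{D}$. Your explicit bookkeeping of the horizontal piece $\mathcal{H}\nabla_U\omega V$ is just a more detailed rendering of the same computation.
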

\begin{proof}
Let $U, V\in\Gamma(\mathcal{D}^{\bot})$ then using the Lemma~\ref{lem:2} with (\ref{eq:1}) and (\ref{eq:10}), we derive
$$
\nabla_{U}V = - J(\mathcal{T}_{U}\phi V + \mathcal{V}\nabla_{U}\phi V + \mathcal{H}\nabla_{U}\omega V + \mathcal{T}_{U}\omega V + \mathcal{P}_{V}U + \mathcal{Q}_{V}U).
$$
Take scalar product of last expression with $W\in\Gamma(\mathcal{D})$, we get
\begin{equation*}
g_{M}(\nabla_{U}V, W) = g_{M}(\mathcal{V}\nabla_{U}\phi V + \mathcal{T}_{U}\omega V - \mathcal{Q}_{U}V, \phi W),
\end{equation*}
hence the proof is complete.
\end{proof}
\begin{corollary}
Let $F: (M, g_{M}, J)\rightarrow (B, g_{B})$ be a generic Riemannian submersion from a nearly Kaehler manifold onto a Riemannian manifold. Then the distribution $\mathcal{D}^{\bot}$ is integrable if and only if
\begin{equation*}
\mathcal{V}\nabla_{U}\phi V - \mathcal{V}\nabla_{V}\phi U + \mathcal{T}_{U}\omega V - \mathcal{T}_{V}\omega U - 2\mathcal{Q}_{U}V\in\Gamma(\mathcal{D}^{\bot}),
\end{equation*}
for any $U, V\in\Gamma(\mathcal{D}^{\bot})$.
\end{corollary}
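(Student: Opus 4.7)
The plan is to mimic the derivation already used to pass from Theorem~\ref{thm:1} to the corollary about integrability of $\mathcal{D}$: start from the scalar product formula obtained in the proof of Theorem~\ref{thm:2}, antisymmetrize in $U$ and $V$, and exploit the skew-symmetry (\ref{eq:11}) of $\mathcal{Q}$.

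More concretely, the proof of Theorem~\ref{thm:2} established that for any $U,V\in\Gamma(\mathcal{D}^{\bot})$ and $W\in\Gamma(\mathcal{D})$,
\begin{equation*}
g_{M}(\nabla_{U}V, W) = g_{M}\bigl(\mathcal{V}\nabla_{U}\phi V + \mathcal{T}_{U}\omega V - \mathcal{Q}_{U}V,\,\phi W\bigr).
\end{equation*}
Recall that $\mathcal{D}^{\bot}$ is integrable if and only if $[U,V]\in\Gamma(\mathcal{D}^{\bot})$ for all $U,V\in\Gamma(\mathcal{D}^{\bot})$, equivalently $g_{M}([U,V],W)=0$ for all such $W\in\Gamma(\mathcal{D})$. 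First I would interchange the roles of $U$ and $V$ in the identity above, subtract, and use $[U,V]=\nabla_{U}V-\nabla_{V}U$ together with $\mathcal{Q}_{V}U=-\mathcal{Q}_{U}V$ from (\ref{eq:11}) to obtain
\begin{equation*}
g_{M}([U,V],W)=g_{M}\bigl(\mathcal{V}\nabla_{U}\phi V-\mathcal{V}\nabla_{V}\phi U+\mathcal{T}_{U}\omega V-\mathcal{T}_{V}\omega U-2\mathcal{Q}_{U}V,\,\phi W\bigr).
\end{equation*}

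To finish, I would note that each summand on the left of the inner product is vertical: $\mathcal{Q}_{U}V$ is vertical by (\ref{eq:10}), and $\mathcal{T}_{U}\omega V$ is vertical since $\omega V$ is horizontal and $\mathcal{T}$ sends $(\mathcal{V},\mathcal{H})$ to $\mathcal{V}$. Hence the whole vector is vertical and its pairing with $\phi W$ only tests its $\mathcal{D}$-component. By Lemma~\ref{lem:4}(i), $\phi$ restricts to an automorphism of $\mathcal{D}$, so as $W$ runs over $\Gamma(\mathcal{D})$, $\phi W$ covers all of $\Gamma(\mathcal{D})$; therefore the vanishing of $g_{M}([U,V],W)$ for every such $W$ is equivalent to the bracketed vertical vector lying in $\Gamma(\mathcal{D}^{\bot})$, which is the stated condition.

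I do not anticipate a substantive obstacle: the argument is purely a symmetrization of Theorem~\ref{thm:2}'s computation combined with the skew-symmetry of $\mathcal{Q}$. The only point requiring a little care is the last step, namely justifying that testing against $\phi W$ with $W\in\Gamma(\mathcal{D})$ is equivalent to testing against arbitrary elements of $\mathcal{D}$, which uses precisely the invariance $\phi\mathcal{D}=\mathcal{D}$ in Lemma~\ref{lem:4}(i).
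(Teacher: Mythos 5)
Your proposal is correct and follows exactly the route the paper intends: antisymmetrize the displayed identity from the proof of Theorem~\ref{thm:2} in $U$ and $V$, use $\mathcal{Q}_{V}U=-\mathcal{Q}_{U}V$ from (\ref{eq:11}), and conclude via $\phi\mathcal{D}=\mathcal{D}$. Your explicit justification of the final step (that the tested vector is vertical and $\phi W$ sweeps out $\mathcal{D}$) is a detail the paper leaves implicit, but there is no substantive difference in approach.
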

Now, we recall an important theorem for product structures from \cite{ponge}.
\begin{theorem}\label{thm:A}
Let $g$ be a Riemannian metric tensor on the manifold $\overline{M} = M\times N$ and assume that the canonical foliations $\mathcal{D}_{M}$ and $\mathcal{D}_{N}$ intersect perpendicularly everywhere. Then $g$ is the metric tensor of:
\begin{itemize}
  \item[(i)] a double-twisted product $M\times_{(f, g)}N$ if and only if $\mathcal{D}_{M}$ and $\mathcal{D}_{N}$ are totally umbilical foliations,
  \item[(ii)] a twisted product $M\times_{f}N$ if and only if $\mathcal{D}_{M}$ is a totally geodesic foliation and $\mathcal{D}_{N}$ is a totally umbilical foliation,
  \item[(iii)] a warped product $M\times_{f}N$ if and only if $\mathcal{D}_{M}$ is a totally geodesic foliation and $\mathcal{D}_{N}$ is a spherical foliation, i.e., it is umbilical and its mean curvature vector field is
parallel, and
  \item[(iv)] a usual product of Riemannian manifolds if and only if $\mathcal{D}_{M}$ and $\mathcal{D}_{N}$ are totally
geodesic foliations.
\end{itemize}
\end{theorem}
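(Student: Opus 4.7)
The plan is to analyze the two orthogonal integrable distributions $\mathcal{D}_M$ and $\mathcal{D}_N$ and extract the analytic shape of the metric $g$ from the geometric hypotheses on their second fundamental forms $h_M, h_N$ and mean curvature vector fields $H_M, H_N$. Since $T\overline{M} = \mathcal{D}_M \oplus \mathcal{D}_N$ splits orthogonally and the leaves are the canonical slices $M \times \{y\}$ and $\{x\} \times N$, one may write $g = g_1 + g_2$, where $g_1$ and $g_2$ are the restrictions of $g$ to $\mathcal{D}_M$ and $\mathcal{D}_N$, extended by zero off each summand. The Gauss formula identifies the off-foliation components of the Levi--Civita connection with the second fundamental forms $h_M, h_N$, providing the bridge between the tensorial hypotheses and the explicit structure of $g$.

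First I would prove (i), which carries the bulk of the work. Under the umbilicity hypotheses $h_M(X,Y) = g(X,Y)H_M$ and $h_N(Z,W) = g(Z,W)H_N$, the decisive step is to show that the dual $1$-forms $H_M^{\flat}$ and $H_N^{\flat}$ are closed on the leaves normal to them. This follows by combining the Frobenius integrability of $\mathcal{D}_M$ and $\mathcal{D}_N$ with the symmetry of $h_M$ and $h_N$; a standard manipulation then lets one write each mean curvature locally as a gradient of the logarithm of a positive function, namely $H_M = \nabla^{g_2}(\log f)$ and $H_N = \nabla^{g_1}(\log \tilde f)$ for positive $f, \tilde f$ on $\overline{M}$. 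A conformal rescaling of the leafwise metrics then rewrites $g$ in the form $f^2\,\pi_M^{\ast}g_M + \tilde f^{\,2}\,\pi_N^{\ast}g_N$ for genuine Riemannian metrics $g_M, g_N$ on the base factors, which is precisely the double-twisted product metric. The converse direction is a direct computation: starting from the double-twisted form, one checks that $h_M$ and $h_N$ are pointwise proportional to $g_1$ and $g_2$ respectively.

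Cases (ii), (iii), and (iv) then follow as progressive specializations of (i). If $\mathcal{D}_M$ is totally geodesic, then $H_M \equiv 0$, hence $f$ may be taken equal to $1$; combined with umbilicity of $\mathcal{D}_N$ this collapses the double-twisted product to a twisted product $M \times_{\tilde f} N$, giving (ii). For (iii), the additional spherical hypothesis on $\mathcal{D}_N$ (umbilicity together with parallel mean curvature vector field in the normal bundle) forces $H_N^{\flat}$ to have no $\mathcal{D}_N$ dependence, which translates into $\tilde f$ being a function of the $M$-coordinates alone, yielding the warped product. Finally (iv) is immediate since both $H_M$ and $H_N$ vanish and one may take $f = \tilde f = 1$. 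The principal obstacle is the closedness step in (i): one must combine the integrability of \emph{both} foliations with the umbilicity identity carefully, so that each mean curvature vector is realized as a gradient of a function on the \emph{total space} $\overline{M}$ rather than merely leafwise, which is what allows the conformal factors to assemble into a globally defined twisted product structure.
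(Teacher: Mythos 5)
You should first note that the paper does not prove this statement at all: it is recalled verbatim from Ponge and Reckziegel \cite{ponge} as a known structure theorem (the sentence before it reads ``we recall an important theorem for product structures''), so there is no in-paper argument to measure yours against. Judged against the original source, your sketch follows essentially the same route: reduce everything to case (i), realize each mean curvature vector as a leafwise gradient of a logarithm, and rescale conformally. Two places deserve more care than your outline gives them. For the key step in (i), you do not need to prove abstractly that $H_M^{\flat}$ is closed and then invoke a Poincar\'e-type lemma; the cleaner (and in fact necessary) observation is that for lifts $X$ from $M$ and $Z$ from $N$ one has $[X,Z]=0$, hence $Zg(X,X) = -2g(h_M(X,X),Z) = -2g(X,X)\,g(H_M,Z)$, which exhibits $H_M^{\flat}$ restricted to $\mathcal{D}_N$ directly as $-\tfrac{1}{2}\,d\bigl(\log g(X,X)\bigr)$ there; umbilicity is what makes this potential independent of the choice of $X$, and comparison with a reference leaf $M\times\{y_0\}$ is what assembles the conformal factor into a single function on $\overline{M}$ rather than a leaf-by-leaf constant of integration. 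In case (iii) your conclusion is stated too strongly at the intermediate stage: parallelism of $H_N$ in the normal connection yields $Z\bigl(X\log\tilde f\bigr)=0$ for all such lifts, which gives only the additive splitting $\log\tilde f = a\circ\pi_M + b\circ\pi_N$; one then absorbs $e^{2b}$ into the metric $g_N$ on the second factor to obtain the warped product form. With those two steps made explicit, the argument is the standard one and is correct; cases (ii) and (iv) do follow as the specializations you describe, since totally geodesic is exactly umbilical with vanishing mean curvature.
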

\noindent Thus, from the Theorems \ref{thm:1} and \ref{thm:2}, we have the following assertion immediately.
\begin{theorem}\label{thm:3}
Let $F: (M, g_{M}, J)\rightarrow (B, g_{B})$ be a generic Riemannian submersion from a nearly Kaehler manifold onto a Riemannian manifold. Then fibers of $F$ are locally product Riemannian manifolds of the form $M_{\mathcal{D}}\times M_{\mathcal{D}^{\bot}}$, where $M_{\mathcal{D}}$ and $M_{\mathcal{D}^{\bot}}$ are integral manifolds of the vertical distribution $(ker F_{*})$ if and only if
\begin{eqnarray}\label{eq:16}
\mathcal{V}\nabla_{U}\phi V - \mathcal{Q}_{U}V\in\Gamma(\mathcal{D}),\quad \mathcal{T}_{U}\phi V - \mathcal{P}_{U}V\in\Gamma(\mu),
\end{eqnarray}
and
\begin{equation}\label{eq:17}
\mathcal{V}\nabla_{W}\phi Z + \mathcal{T}_{W}\omega Z - \mathcal{Q}_{W}Z\in\Gamma(\mathcal{D}^{\bot}),
\end{equation}
for any $U, V\in\Gamma(\mathcal{D})$, $W, Z\in\Gamma(\mathcal{D}^{\bot})$.
\end{theorem}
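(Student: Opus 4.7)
The plan is to obtain this theorem as an immediate corollary by concatenating Theorems~\ref{thm:1}, \ref{thm:2}, and case (iv) of the product-structure Theorem~\ref{thm:A}, without any fresh computation.

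First, I would localize to a single fiber $F^{-1}(q)$, $q\in B$. Its tangent bundle is the restriction of $(\ker F_{*})$, which by the definition of a generic Riemannian submersion decomposes orthogonally as $\mathcal{D}\oplus\mathcal{D}^{\bot}$. Once these two distributions are integrable, they produce canonical foliations $\mathcal{F}_{\mathcal{D}}$ and $\mathcal{F}_{\mathcal{D}^{\bot}}$ intersecting perpendicularly everywhere inside the fiber, which is precisely the ambient configuration required by Theorem~\ref{thm:A}. Integrability does not have to be assumed separately: in each direction of the equivalence it is delivered by Theorems~\ref{thm:1} and \ref{thm:2}, whose conditions automatically force the leaves to exist.

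Second, I would read off the two hypotheses using the earlier characterizations. Condition (\ref{eq:16}) is exactly the conclusion of Theorem~\ref{thm:1}, so it is equivalent to $\mathcal{D}$ defining a totally geodesic foliation; condition (\ref{eq:17}) is the conclusion of Theorem~\ref{thm:2}, hence equivalent to the same property for $\mathcal{D}^{\bot}$. Invoking Theorem~\ref{thm:A}(iv), which identifies the usual Riemannian product metric on $M_{\mathcal{D}}\times M_{\mathcal{D}^{\bot}}$ with the joint total-geodesy of both canonical foliations, then yields the desired biconditional.

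I do not expect any genuine obstacle; the argument is a formal synthesis. The one subtlety worth explicit mention is that Theorems~\ref{thm:1}--\ref{thm:2} describe the total-geodesy of $\mathcal{D}$ and $\mathcal{D}^{\bot}$ with respect to $\nabla$, whereas Theorem~\ref{thm:A} is applied inside the fiber with its induced Levi--Civita connection $\hat{\nabla}$. Lemma~\ref{lem:2}(i) bridges this gap: for vertical $U,V$ we have $\hat{\nabla}_{U}V = \mathcal{V}\nabla_{U}V$, so the vertical-component conditions extracted in the proofs of Theorems~\ref{thm:1} and \ref{thm:2} are precisely the conditions for total-geodesy inside the fiber. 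With this observation the three quoted results snap together and the theorem follows.
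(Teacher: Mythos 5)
Your proposal is correct and is exactly the paper's argument: the authors state Theorem~\ref{thm:3} as an immediate consequence of Theorems~\ref{thm:1}, \ref{thm:2} and part (iv) of Theorem~\ref{thm:A}, with no further computation. Your added remark that the conditions extracted in the proofs of Theorems~\ref{thm:1}--\ref{thm:2} are really statements about $\hat{\nabla}_{U}V=\mathcal{V}\nabla_{U}V$, and hence about total geodesy inside the fiber where Theorem~\ref{thm:A} is applied, is a sensible clarification that the paper leaves implicit.
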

\begin{theorem}\label{thm:4}
Let $F: (M, g_{M}, J)\rightarrow (B, g_{B})$ be a generic Riemannian submersion from a nearly Kaehler manifold onto a Riemannian manifold. Then the horizontal distribution $(ker F_{*})^{\bot}$ defines a totally geodesic foliation in $M$ if and only if
\begin{equation}\label{eq:18}
\mathcal{A}_{X}\mathcal{B}Y + \mathcal{H}\nabla_{X}\mathcal{C}Y - \mathcal{P}_{X}Y\in\Gamma(\mu),\quad \mathcal{V}\nabla_{X}\mathcal{B}Y + \mathcal{A}_{X}\mathcal{C}Y - \mathcal{Q}_{X}Y = 0,
\end{equation}
for any $X, Y\in\Gamma(ker F_{*})^{\bot}$.
\end{theorem}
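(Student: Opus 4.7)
The plan is to mimic the proofs of Theorems~\ref{thm:1} and~\ref{thm:2}: derive an explicit formula for $\nabla_X Y$ when $X, Y \in \Gamma(\ker F_*)^{\bot}$, then take the scalar product with a generic vertical vector field $V$, and read off the vanishing conditions on the two natural components of the resulting expression.

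First, starting from $J^{2} = -I$, I would write $\nabla_X Y = -J(J\nabla_X Y)$. The nearly Kaehler identity (\ref{eq:1}) gives $(\nabla_X J)Y = -(\nabla_Y J)X$, hence $J\nabla_X Y = \nabla_X (JY) + (\nabla_Y J)X$. Decomposing $JY = \mathcal{B}Y + \mathcal{C}Y$ via (\ref{eq:9}), $(\nabla_Y J)X = \mathcal{P}_Y X + \mathcal{Q}_Y X$ via (\ref{eq:10}), applying the skew-symmetry (\ref{eq:11}) to replace $\mathcal{P}_Y X = -\mathcal{P}_X Y$ and $\mathcal{Q}_Y X = -\mathcal{Q}_X Y$, and expanding $\nabla_X \mathcal{B}Y$ and $\nabla_X \mathcal{C}Y$ by Lemma~\ref{lem:2} (noting $\mathcal{B}Y$ is vertical and $\mathcal{C}Y, X$ are horizontal), I expect to arrive at
\[
\nabla_X Y = -J\bigl[(\mathcal{V}\nabla_X\mathcal{B}Y + \mathcal{A}_X\mathcal{C}Y - \mathcal{Q}_X Y) + (\mathcal{A}_X\mathcal{B}Y + \mathcal{H}\nabla_X\mathcal{C}Y - \mathcal{P}_X Y)\bigr],
\]
where the first parenthesis is vertical and the second horizontal; these are precisely the expressions appearing in the theorem. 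This step is essentially a reshuffling of (\ref{eq:13}) applied to $\mathcal{A}_X Y = 0$.

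Next, taking the scalar product with $V \in \Gamma(\ker F_*)$ and exploiting $g_M(JA,B) = -g_M(A,JB)$ together with $JV = \phi V + \omega V$ and the orthogonality of the vertical and horizontal distributions, I expect the cross terms to drop out, leaving
\[
g_M(\nabla_X Y, V) = g_M(T_1, \phi V) + g_M(T_2, \omega V),
\]
where $T_1 := \mathcal{V}\nabla_X\mathcal{B}Y + \mathcal{A}_X\mathcal{C}Y - \mathcal{Q}_X Y$ and $T_2 := \mathcal{A}_X\mathcal{B}Y + \mathcal{H}\nabla_X\mathcal{C}Y - \mathcal{P}_X Y$. The sufficient direction is then routine: if $T_1 = 0$ and $T_2 \in \Gamma(\mu)$, the first term vanishes trivially and the second vanishes because $\mu$ and $\omega\mathcal{D}^{\bot}$ are orthogonal inside $(\ker F_*)^{\bot}$, so $\nabla_X Y \in \Gamma(\ker F_*)^{\bot}$ and $(\ker F_*)^{\bot}$ is a totally geodesic foliation.

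The main obstacle is the converse. Splitting $V = V_1 + V_2$ with $V_1 \in \Gamma(\mathcal{D})$ and $V_2 \in \Gamma(\mathcal{D}^{\bot})$: for $V = V_1$ one has $\omega V_1 = 0$ and $\phi V_1$ ranges over $\mathcal{D}$ (since $\phi|_{\mathcal{D}}$ is a bijection by Lemma~\ref{lem:4}(i)), forcing $T_1$ to be orthogonal to $\mathcal{D}$; for $V = V_2$ the remaining coupling between $T_1$ and the $\omega\mathcal{D}^{\bot}$-component of $T_2$ has to be unpacked to identify the conditions $T_1 = 0$ and $T_2 \in \Gamma(\mu)$ claimed in the statement. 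I anticipate that the proof will, in the concise style of Theorems~\ref{thm:1} and~\ref{thm:2}, simply display the identity above and read off (\ref{eq:18}) by noting that $T_1$ is vertical with $T_2 \in \Gamma(\mu) \oplus \Gamma(\omega\mathcal{D}^{\bot})$ and $\phi V$, $\omega V$ exhaust these complementary components as $V$ ranges over $\Gamma(\ker F_*)$.
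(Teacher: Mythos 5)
Your proposal follows exactly the paper's route: the paper writes $\nabla_{X}Y = - J(\mathcal{A}_{X}\mathcal{B}Y + \mathcal{V}\nabla_{X}\mathcal{B}Y + \mathcal{H}\nabla_{X}\mathcal{C}Y + \mathcal{A}_{X}\mathcal{C}Y + \mathcal{P}_{Y}X + \mathcal{Q}_{Y}X)$ and finishes by ``taking the scalar product with $U\in\Gamma(ker F_{*})$,'' which after applying (\ref{eq:11}) is precisely your identity $g_{M}(\nabla_{X}Y, V) = g_{M}(T_{1}, \phi V) + g_{M}(T_{2}, \omega V)$. The subtlety you flag in the converse --- that for $V\in\Gamma(\mathcal{D}^{\bot})$ one only obtains the vanishing of the sum $g_{M}(T_{1}, \phi V) + g_{M}(T_{2}, \omega V)$ rather than of each term separately --- is a genuine issue, but the paper's one-line proof does not address it either, so your reconstruction matches the paper in both substance and level of rigor.
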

\begin{proof}
For any for any $X, Y\in\Gamma(ker F_{*})^{\bot}$, we have
$$
\nabla_{X}Y = - J(\mathcal{A}_{X}\mathcal{B}Y + \mathcal{V}\nabla_{X}\mathcal{B}Y + \mathcal{H}\nabla_{X}\mathcal{C}Y + \mathcal{A}_{X}\mathcal{C}Y + \mathcal{P}_{Y}X + \mathcal{Q}_{Y}X).
$$
On taking scalar product of last expression with $U\in\Gamma(ker F_{*})$, the proof is complete.
\end{proof}
\indent It is known that for a Riemannian submersion, the vertical distribution $(ker F_{*})$ is always integrable and the horizontal distribution $(ker F_{*})$ is not always integrable. Therefore, from the above Theorem~\ref{thm:4}, we obtain necessary and sufficient conditions for the horizontal distribution to be integrable.
\begin{corollary}
Let $F: (M, g_{M}, J)\rightarrow (B, g_{B})$ be a generic Riemannian submersion from a nearly Kaehler manifold onto a Riemannian manifold. Then, the horizontal distribution $(ker F_{*})^{\bot}$ is integrable if and only if
\begin{eqnarray*}
&&\mathcal{A}_{X}\mathcal{B}Y - \mathcal{A}_{Y}\mathcal{B}X + \mathcal{H}\nabla_{X}\mathcal{C}Y - \mathcal{H}\nabla_{Y}\mathcal{C}X - 2\mathcal{P}_{X}Y\in\Gamma(\mu),\nonumber\\&&
\mathcal{V}\nabla_{X}\mathcal{B}Y - \mathcal{V}\nabla_{Y}\mathcal{B}X + \mathcal{A}_{X}\mathcal{C}Y - \mathcal{A}_{Y}\mathcal{C}X - 2\mathcal{Q}_{X}Y = 0,
\end{eqnarray*}
for any $X, Y\in\Gamma(ker F_{*})^{\bot}$.
\end{corollary}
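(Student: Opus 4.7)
The plan is to adapt the argument of Theorem~\ref{thm:4} by antisymmetrization in the horizontal entries. Since the vertical distribution is involutive automatically, the horizontal distribution $(\ker F_{*})^{\perp}$ is integrable if and only if $[X,Y]=\nabla_{X}Y-\nabla_{Y}X$ is horizontal for every pair of horizontal vector fields $X,Y$, which by duality is equivalent to $g_{M}(\nabla_{X}Y-\nabla_{Y}X,U)=0$ for every $U\in\Gamma(\ker F_{*})$.

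My first step would be to reuse the identity obtained in the proof of Theorem~\ref{thm:4},
\[
\nabla_{X}Y = -J\bigl(\mathcal{A}_{X}\mathcal{B}Y + \mathcal{V}\nabla_{X}\mathcal{B}Y + \mathcal{H}\nabla_{X}\mathcal{C}Y + \mathcal{A}_{X}\mathcal{C}Y + \mathcal{P}_{Y}X + \mathcal{Q}_{Y}X\bigr),
\]
and write down the analogous expression for $\nabla_{Y}X$ by swapping $X$ and $Y$. Subtracting these and invoking the skew-symmetry relations $\mathcal{P}_{Y}X=-\mathcal{P}_{X}Y$, $\mathcal{Q}_{Y}X=-\mathcal{Q}_{X}Y$ from (\ref{eq:11}) produces the factor $-2$ that appears in the corollary's expressions.

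Next, I would take the scalar product with an arbitrary vertical $U$ and use the fact that $J$ is a $g_{M}$-isometry to replace $g_{M}(J(\cdot),U)$ by $-g_{M}(\cdot,JU)$. Decomposing $JU=\phi U+\omega U$ via (\ref{eq:8}) with $\phi U\in\Gamma(\ker F_{*})$ and $\omega U\in\Gamma(\omega\mathcal{D}^{\perp})\subset\Gamma((\ker F_{*})^{\perp})$, the summands that are horizontal pair only with $\omega U$ and those that are vertical pair only with $\phi U$. This splits $g_{M}([X,Y],U)$ into two sums matching exactly the expressions in the corollary: the antisymmetrized horizontal combination $\mathcal{A}_{X}\mathcal{B}Y-\mathcal{A}_{Y}\mathcal{B}X+\mathcal{H}\nabla_{X}\mathcal{C}Y-\mathcal{H}\nabla_{Y}\mathcal{C}X-2\mathcal{P}_{X}Y$ paired against $\omega U$, and the antisymmetrized vertical combination $\mathcal{V}\nabla_{X}\mathcal{B}Y-\mathcal{V}\nabla_{Y}\mathcal{B}X+\mathcal{A}_{X}\mathcal{C}Y-\mathcal{A}_{Y}\mathcal{C}X-2\mathcal{Q}_{X}Y$ paired against $\phi U$.

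To recover the two constraints as independent conditions I would let $U$ range first over $\Gamma(\mathcal{D})$, where $\omega U=0$ and $\phi U$ sweeps out $\mathcal{D}$ by Lemma~\ref{lem:4}(i), forcing the vertical combination to vanish; and then over $\Gamma(\mathcal{D}^{\perp})$, where $\omega U$ sweeps out $\omega\mathcal{D}^{\perp}$, forcing the horizontal combination to be orthogonal to $\omega\mathcal{D}^{\perp}$, i.e.\ to lie in $\mu$ by the decomposition $(\ker F_{*})^{\perp}=\omega\mathcal{D}^{\perp}\oplus\mu$. The main bookkeeping subtlety---and the step worth double-checking---is the clean decoupling of the two pairings; this is precisely why the first condition is an $\in\Gamma(\mu)$ statement rather than an equality, since its $\omega\mathcal{D}^{\perp}$-component is tied up with the vertical pairing that is already killed by the second condition.
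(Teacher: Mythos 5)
Your proposal matches the paper's route exactly: the paper offers no separate proof of this corollary, simply antisymmetrizing the identity $\nabla_{X}Y=-J(\mathcal{A}_{X}\mathcal{B}Y+\mathcal{V}\nabla_{X}\mathcal{B}Y+\mathcal{H}\nabla_{X}\mathcal{C}Y+\mathcal{A}_{X}\mathcal{C}Y+\mathcal{P}_{Y}X+\mathcal{Q}_{Y}X)$ from the proof of Theorem~\ref{thm:4} and invoking the skew-symmetry (\ref{eq:11}) of $\mathcal{P}$ and $\mathcal{Q}$ to produce the factors of $2$, exactly as you do. The decoupling subtlety you flag at the end (the pairings against $\phi U$ and $\omega U$ do not separate automatically when $U\in\Gamma(\mathcal{D}^{\bot})$) is genuine, but it is present and equally unaddressed in the paper's own Theorem~\ref{thm:4}, so your treatment is, if anything, more careful than the source.
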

\begin{theorem}\label{thm:5}
Let $F: (M, g_{M}, J)\rightarrow (B, g_{B})$ be a generic Riemannian submersion from a nearly Kaehler manifold onto a Riemannian manifold. Then, the vertical distribution $(ker F_{*})$ defines a totally geodesic foliation in $M$ if and only if
\begin{eqnarray}\label{eq:19}
\mathcal{T}_{U}\phi V + \mathcal{H}\nabla_{U}\omega V - \mathcal{P}_{U}V\in\Gamma(\omega\mathcal{D}^{\bot}),
\end{eqnarray}
\begin{eqnarray}\label{eq:20}
\mathcal{V}\nabla_{U}\phi V + \mathcal{T}_{U}\omega V - \mathcal{Q}_{U}V\in\Gamma(\mathcal{D}),
\end{eqnarray}
for any $U, V\in\Gamma(ker F_{*})$.
\end{theorem}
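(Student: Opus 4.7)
The plan is to proceed in the same spirit as the proofs of Theorems~\ref{thm:1}, \ref{thm:2}, and \ref{thm:4}: derive an expression of the form $\nabla_{U}V=-J[\cdots]$ for $U,V\in\Gamma(ker F_{*})$ and then test it against horizontal vector fields. Starting from the nearly Kaehler identity~(\ref{eq:1}) applied to the pair $U,V$, one has $J\nabla_{U}V=\nabla_{U}(JV)+(\nabla_{V}J)U$. Substituting $JV=\phi V+\omega V$ from~(\ref{eq:8}), expanding $\nabla_{U}\phi V$ and $\nabla_{U}\omega V$ via Lemma~\ref{lem:2}(i)--(ii), and writing $(\nabla_{V}J)U=\mathcal{P}_{V}U+\mathcal{Q}_{V}U$, one arrives at
\begin{equation*}
\nabla_{U}V=-J\bigl[\mathcal{T}_{U}\phi V+\mathcal{V}\nabla_{U}\phi V+\mathcal{H}\nabla_{U}\omega V+\mathcal{T}_{U}\omega V+\mathcal{P}_{V}U+\mathcal{Q}_{V}U\bigr].
\end{equation*}

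Since $(ker F_{*})$ defines a totally geodesic foliation iff $g_{M}(\nabla_{U}V,X)=0$ for every $X\in\Gamma(ker F_{*})^{\bot}$, I would take the inner product of the displayed formula with $X$. Using $g_{M}(JA,B)=-g_{M}(A,JB)$ and the decomposition $JX=\mathcal{B}X+\mathcal{C}X$ with $\mathcal{B}X\in\Gamma(\mathcal{D}^{\bot})$ vertical and $\mathcal{C}X\in\Gamma(\mu)$ horizontal, only the vertical--vertical and horizontal--horizontal pairings survive; the antisymmetries~(\ref{eq:11}) then turn $\mathcal{P}_{V}U,\mathcal{Q}_{V}U$ into $-\mathcal{P}_{U}V,-\mathcal{Q}_{U}V$. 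The outcome is
\begin{equation*}
g_{M}(\nabla_{U}V,X)=g_{M}\bigl(\mathcal{V}\nabla_{U}\phi V+\mathcal{T}_{U}\omega V-\mathcal{Q}_{U}V,\mathcal{B}X\bigr)+g_{M}\bigl(\mathcal{T}_{U}\phi V+\mathcal{H}\nabla_{U}\omega V-\mathcal{P}_{U}V,\mathcal{C}X\bigr).
\end{equation*}

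To decouple the two conditions, I would first restrict $X$ to $\Gamma(\mu)$; then $\mathcal{B}X=0$ while $\mathcal{C}X=JX$ ranges over $\mu$, so the vanishing of the second pairing is equivalent to $\mathcal{T}_{U}\phi V+\mathcal{H}\nabla_{U}\omega V-\mathcal{P}_{U}V$ being orthogonal to $\mu$ inside $(ker F_{*})^{\bot}$, namely condition~(\ref{eq:19}). Next, restricting $X$ to $\Gamma(\omega\mathcal{D}^{\bot})$, condition~(\ref{eq:19}) already annihilates the second pairing (as $\mathcal{C}X\in\Gamma(\mu)$), and the first pairing vanishes for all such $X$ iff the first bracketed expression is orthogonal to the image of $\mathcal{B}|_{\omega\mathcal{D}^{\bot}}$. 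The main subtlety, which I regard as the principal obstacle, is to check that this image is the whole of $\mathcal{D}^{\bot}$: from Lemma~\ref{lem:4}(ii), $\mathcal{B}\omega=-\mathrm{id}-\phi^{2}$, and since $\mathcal{D}^{\bot}$ is purely real the operator $\mathrm{id}+\phi^{2}$ is strictly positive on $\mathcal{D}^{\bot}$ (any eigenvalue $\lambda$ of $\phi^{2}|_{\mathcal{D}^{\bot}}$ satisfies $\lambda=-\|\phi V\|^{2}/\|V\|^{2}>-1$, as $\omega V\neq 0$ on $\mathcal{D}^{\bot}\setminus\{0\}$). Therefore the first pairing vanishes for all $X\in\Gamma(\omega\mathcal{D}^{\bot})$ iff $\mathcal{V}\nabla_{U}\phi V+\mathcal{T}_{U}\omega V-\mathcal{Q}_{U}V$ lies in $\mathcal{D}$, which is~(\ref{eq:20}); retracing the implications yields the converse.
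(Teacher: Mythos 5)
Your proof is correct and takes essentially the same route as the paper's: both start from the identity $\nabla_{U}V=-J\bigl[\mathcal{T}_{U}\phi V+\mathcal{V}\nabla_{U}\phi V+\mathcal{H}\nabla_{U}\omega V+\mathcal{T}_{U}\omega V+\mathcal{P}_{V}U+\mathcal{Q}_{V}U\bigr]$, the paper simply reading off the horizontal component of the right-hand side via the $\mathcal{C}/\omega$ decomposition where you instead pair with horizontal test vectors in $\mu$ and $\omega\mathcal{D}^{\bot}$. Your additional verification that $\mathcal{B}$ maps $\omega\mathcal{D}^{\bot}$ onto $\mathcal{D}^{\bot}$ (equivalently, that $\mathrm{id}+\phi^{2}$ is positive definite on the purely real distribution) is a detail the paper leaves implicit, and you handle it correctly.
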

\begin{proof}
Let $U, V\in\Gamma(ker F_{*})$, then using the Lemma~\ref{lem:2} with (\ref{eq:1}) and (\ref{eq:10}), we obtain
\begin{eqnarray*}
\nabla_{U}V& =& - J(\mathcal{T}_{U}\phi V + \mathcal{V}\nabla_{U}\phi V + \mathcal{H}\nabla_{U}\omega V + \mathcal{T}_{U}\omega V + \mathcal{P}_{V}U + \mathcal{Q}_{V}U)\nonumber\\&&
= - \{\mathcal{B}(\mathcal{T}_{U}\phi V + \mathcal{H}\nabla_{U}\omega V + \mathcal{P}_{V}U) + \phi(\mathcal{V}\nabla_{U}\phi V  + \mathcal{T}_{U}\omega V + \mathcal{Q}_{V}U)\}\nonumber\\&&\quad
- \{\mathcal{C}(\mathcal{T}_{U}\phi V + \mathcal{H}\nabla_{U}\omega V + \mathcal{P}_{V}U) + \omega(\mathcal{V}\nabla_{U}\phi V
+ \mathcal{T}_{U}\omega V + \mathcal{Q}_{V}U)\}.
\end{eqnarray*}
Hence, the vertical distribution $(ker F_{*})$ defines a totally geodesic foliation in $M$ if and only if
\begin{equation*}
\mathcal{C}(\mathcal{T}_{U}\phi V + \mathcal{H}\nabla_{U}\omega V + \mathcal{P}_{V}U) = 0,\quad
\omega(\mathcal{V}\nabla_{U}\phi V + \mathcal{T}_{U}\omega V + \mathcal{Q}_{V}U) = 0,
\end{equation*}
thus, the proof is complete.
\end{proof}
\noindent Thus, from the Theorems \ref{thm:1}, \ref{thm:2}, \ref{thm:4} and \ref{thm:5}, we have following observations:
\begin{corollary}
Let $F: (M, g_{M}, J)\rightarrow (B, g_{B})$ be a generic Riemannian submersion from a nearly Kaehler manifold onto a Riemannian manifold. Then, the total space $M$ of generic Riemannian submersion $F$ is a locally product Riemannian manifold of the form $M_{\mathcal{D}}\times M_{\mathcal{D}^{\bot}}\times M_{(ker F_{*})^{\bot}}$ if and only if the expressions in (\ref{eq:16})--(\ref{eq:18}) hold, where $M_{\mathcal{D}}$, $M_{\mathcal{D}^{\bot}}$ and $M_{(ker F_{*})^{\bot}}$ are integral manifolds of $(ker F_{*})$ and $(ker F_{*})^{\bot}$, respectively
\end{corollary}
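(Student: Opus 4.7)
The plan is to recognize this corollary as a direct assembly of Theorems \ref{thm:1}, \ref{thm:2}, and \ref{thm:4} with an iterated application of Theorem \ref{thm:A}(iv). By the definition of a generic Riemannian submersion together with the standard decomposition $TM=(\ker F_*)\oplus(\ker F_*)^{\bot}$, the tangent bundle admits the orthogonal splitting $TM=\mathcal{D}\oplus\mathcal{D}^{\bot}\oplus(\ker F_{*})^{\bot}$ into three mutually orthogonal distributions whose direct sum is $TM$. For the forward implication, if $M$ is locally isometric to $M_{\mathcal{D}}\times M_{\mathcal{D}^{\bot}}\times M_{(\ker F_{*})^{\bot}}$, then each of the three distributions is tangent to the corresponding factor and hence defines a totally geodesic foliation of $M$; the conditions (\ref{eq:16}), (\ref{eq:17}), and (\ref{eq:18}) then drop out of Theorems \ref{thm:1}, \ref{thm:2}, and \ref{thm:4} respectively.

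For the converse, I would assume (\ref{eq:16})--(\ref{eq:18}) and invoke those same three theorems to conclude that $\mathcal{D}$, $\mathcal{D}^{\bot}$, and $(\ker F_{*})^{\bot}$ each define a totally geodesic foliation of $M$. I would then apply Theorem \ref{thm:A}(iv) in two stages: first to the pair $(\ker F_{*},(\ker F_{*})^{\bot})$ to obtain a local splitting $M\cong M_{\ker F_{*}}\times M_{(\ker F_{*})^{\bot}}$, and then within the factor $M_{\ker F_{*}}$ (on which $\mathcal{D}$ and $\mathcal{D}^{\bot}$ restrict to complementary totally geodesic foliations) to obtain the finer decomposition $M_{\mathcal{D}}\times M_{\mathcal{D}^{\bot}}$. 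Composing these two splittings yields the claimed three-fold local product.

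The main obstacle will be justifying the first stage of the iteration, namely that $\ker F_{*}$ itself defines a totally geodesic foliation of $M$. The nontrivial case is showing that $\nabla_{U}V$ has no horizontal component when $U\in\Gamma(\mathcal{D})$ and $V\in\Gamma(\mathcal{D}^{\bot})$. I would handle this by testing against an arbitrary $X\in\Gamma((\ker F_{*})^{\bot})$, writing $g_{M}(\nabla_{U}V,X)=-g_{M}(V,\nabla_{U}X)$ by metric compatibility, and then decomposing $\nabla_{U}X$ using Lemma \ref{lem:2}(ii); the totally geodesic condition on $(\ker F_{*})^{\bot}$ from (\ref{eq:18}), combined with the symmetry property $\mathcal{T}_{U}X=\mathcal{T}_{X}U$-type identities encoded in the O'Neill tensors and the integrability of $\ker F_{*}$, should force this inner product to vanish. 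Once this mixed-component check is complete, the two applications of Theorem \ref{thm:A}(iv) go through and the corollary follows.
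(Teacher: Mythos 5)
Your overall strategy coincides with the paper's: the paper offers no written proof of this corollary, presenting it as an immediate consequence of Theorems \ref{thm:1}, \ref{thm:2} and \ref{thm:4} (which identify the conditions (\ref{eq:16})--(\ref{eq:18}) with $\mathcal{D}$, $\mathcal{D}^{\bot}$ and $(\ker F_{*})^{\bot}$ defining totally geodesic foliations) combined with Theorem \ref{thm:A}(iv). Your forward implication is fine, and you are right to isolate the one step that is not automatic in the converse: to run the first application of Theorem \ref{thm:A}(iv) you need $\ker F_{*}$ itself to be a totally geodesic foliation, i.e. $\mathcal{T}_{U}W=0$ also for the mixed pairs $U\in\Gamma(\mathcal{D})$, $W\in\Gamma(\mathcal{D}^{\bot})$, and this is not formally contained in (\ref{eq:16})--(\ref{eq:18}), which only kill the two diagonal blocks of $\mathcal{T}$ restricted to the fibers.

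The gap is that your proposed way of closing this step is circular and would fail. Writing $g_{M}(\nabla_{U}W,X)=-g_{M}(W,\nabla_{U}X)$ and decomposing $\nabla_{U}X$ by Lemma \ref{lem:2}(ii) gives $-g_{M}(W,\mathcal{T}_{U}X)$, and since $\mathcal{T}_{U}$ is skew-adjoint and interchanges the vertical and horizontal subbundles this equals $g_{M}(\mathcal{T}_{U}W,X)$ again: you have reproduced the quantity you wanted to annihilate rather than bounded it. Condition (\ref{eq:18}) cannot help, because it is equivalent to $\mathcal{A}_{X}Y=\mathcal{V}\nabla_{X}Y=0$ for horizontal $X,Y$ and so constrains the tensor $\mathcal{A}$, while the obstruction lives in the off-diagonal block of $\mathcal{T}$; moreover $\mathcal{T}$ is symmetric only on pairs of vertical arguments ($\mathcal{T}_{X}=0$ for horizontal $X$ by construction), so no identity of the form $\mathcal{T}_{U}X=\mathcal{T}_{X}U$ is available. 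A symmetric bilinear form can vanish on $\mathcal{D}\times\mathcal{D}$ and on $\mathcal{D}^{\bot}\times\mathcal{D}^{\bot}$ without vanishing on $\mathcal{D}\times\mathcal{D}^{\bot}$, and three mutually orthogonal complementary totally geodesic foliations need not yield a local triple product (the three coordinate line fields of a left-invariant metric on the Heisenberg group are totally geodesic but the metric is not flat). To make the converse airtight you must either add, or derive in this setting, the extra condition $\mathcal{T}_{U}W=0$ for $U\in\Gamma(\mathcal{D})$, $W\in\Gamma(\mathcal{D}^{\bot})$ before the first application of Theorem \ref{thm:A}(iv) --- a point the paper's one-line justification also leaves unaddressed --- or else read the factors $M_{\mathcal{D}}\times M_{\mathcal{D}^{\bot}}$ as a decomposition of the fibers, as in Theorem \ref{thm:3}, rather than of $M$.
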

\begin{corollary}
Let $F: (M, g_{M}, J)\rightarrow (B, g_{B})$ be a generic Riemannian submersion from a nearly Kaehler manifold onto a Riemannian manifold. Then, the total space $M$ of generic Riemannian submersion $F$ is a locally product Riemannian manifold of the form $M_{(ker F_{*})}\times M_{(ker F_{*})^{\bot}}$ if and only if the expressions in (\ref{eq:18})--(\ref{eq:20}) hold, where $M_{(ker F_{*})}$ and $M_{(ker F_{*})^{\bot}}$ are integral manifolds of $(ker F_{*})$ and $(ker F_{*})^{\bot}$, respectively.
\end{corollary}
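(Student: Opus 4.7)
The plan is to derive this corollary as a direct consequence of Theorem~\ref{thm:A}(iv), which characterizes locally product Riemannian manifolds as precisely those manifolds carrying two complementary, mutually orthogonal foliations both of which are totally geodesic. Since we already have the $g_M$-orthogonal splitting $TM = (ker F_{*}) \oplus (ker F_{*})^{\bot}$, the factors $M_{(ker F_{*})}$ and $M_{(ker F_{*})^{\bot}}$ appearing in the statement are forced to be integral manifolds of these two distributions, and the only content to verify is that each of these distributions is integrable and its leaves are totally geodesic in $M$.

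First, I would handle the vertical side: the distribution $(ker F_{*})$ is automatically integrable for any Riemannian submersion (it is the tangent distribution to the fibers), so only the totally geodesic condition needs to be addressed. This is exactly the content of Theorem~\ref{thm:5}, which asserts that $(ker F_{*})$ defines a totally geodesic foliation if and only if (\ref{eq:19}) and (\ref{eq:20}) hold.

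Second, I would address the horizontal side: by Theorem~\ref{thm:4}, the distribution $(ker F_{*})^{\bot}$ defines a totally geodesic foliation if and only if (\ref{eq:18}) holds. Note that integrability of $(ker F_{*})^{\bot}$ is not assumed a priori, but a totally geodesic distribution is automatically integrable (since the Levi--Civita connection restricted to leaves is torsion-free), so the single condition (\ref{eq:18}) subsumes both integrability and the totally geodesic property.

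Combining these two ingredients with Theorem~\ref{thm:A}(iv) yields that $M$ is locally isometric to a Riemannian product $M_{(ker F_{*})} \times M_{(ker F_{*})^{\bot}}$ precisely when (\ref{eq:18}), (\ref{eq:19}), (\ref{eq:20}) are simultaneously satisfied. There is no real obstacle here, since each piece has already been proved in the paper; the corollary is essentially a bookkeeping exercise assembling Theorems~\ref{thm:4}, \ref{thm:5}, and \ref{thm:A}(iv). The only minor point worth noting in a written proof is the remark that orthogonality of the two foliations is built into the definition of a Riemannian submersion, so the hypothesis of Theorem~\ref{thm:A} that the canonical foliations intersect perpendicularly is automatic in our setting.
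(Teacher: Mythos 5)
Your proposal is correct and follows exactly the route the paper intends: the corollary is stated as an immediate consequence of Theorems~\ref{thm:4} and \ref{thm:5} (which characterize when $(ker F_{*})^{\bot}$ and $(ker F_{*})$ define totally geodesic foliations via (\ref{eq:18}) and (\ref{eq:19})--(\ref{eq:20}) respectively) combined with Theorem~\ref{thm:A}(iv). Your additional remarks on the automatic integrability of the vertical distribution and of a totally geodesic horizontal distribution, and on the orthogonality hypothesis of Theorem~\ref{thm:A}, are accurate and only make explicit what the paper leaves unsaid.
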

Let $F:(M, g_{M})\rightarrow (B, g_{B})$ be a smooth map between Riemannian manifolds. If $F$ maps every geodesic in the total manifold into a geodesic in the base manifold, in proportion to the arc length then the mapping $F$ is called a totally geodesic map. In other words, $F$ is called a totally geodesic map if and only if the second fundamental form of the map $F$ vanishes identically, that is, $(\nabla F_{*}) = 0$.
\begin{theorem}
Let $F$ be a generic Riemannian submersion from a nearly Kaehler manifold $(M, g, J)$ to a Riemannian manifold ${(B, g_{B})}$. Then, the map $F$ is a totally geodesic map if and only if
\begin{eqnarray*}
(\mathcal{T}_{U}\phi V + \mathcal{H}\nabla_{U}\omega V - \mathcal{P}_{U}V), (\mathcal{T}_{U}\mathcal{B}X + \mathcal{H}\nabla_{U}\mathcal{C}X - \mathcal{P}_{U}X)~ \textrm{belong to}~\Gamma(\omega\mathcal{D}^{\bot}),
\end{eqnarray*}
and
\begin{eqnarray*}
(\mathcal{V}\nabla_{U}\phi V + \mathcal{T}_{U}\omega V - \mathcal{Q}_{U}V), (\mathcal{V}\nabla_{U}\mathcal{B}X + \mathcal{T}_{U}\mathcal{C}X - \mathcal{Q}_{U}X)~ \textrm{belong to}~ \Gamma(\mathcal{D}),
\end{eqnarray*}
for any $U, V\in\Gamma(ker F_{*})$ and $X, Y\in\Gamma(ker F_{*})^{\bot}$.
\end{theorem}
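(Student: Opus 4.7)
The plan is to verify the definition $(\nabla F_{*})(E_1,E_2)=0$ for all $E_1,E_2\in\Gamma(TM)$ by splitting according to the decomposition $TM=(ker F_{*})\oplus(ker F_{*})^{\bot}$ and handling the three resulting cases. Since $(\nabla F_{*})$ is symmetric, it suffices to consider (a) $(\nabla F_{*})(X,Y)$ for $X,Y\in\Gamma(ker F_{*})^{\bot}$, (b) $(\nabla F_{*})(U,V)$ for $U,V\in\Gamma(ker F_{*})$, and (c) the mixed case $(\nabla F_{*})(U,X)$ with $U$ vertical and $X$ horizontal. Case (a) is free: by Lemma~\ref{lem:1}(iii), $(\nabla_{X}Y)^{\mathcal{H}}$ is $F$-related to $\overline{\nabla}_{X_{*}}Y_{*}$, so $(\nabla F_{*})(X,Y)=\nabla^{F}_{X}F_{*}(Y)-F_{*}(\nabla_{X}Y)=0$ on basic fields and hence everywhere by tensoriality.

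For case (b), the starting point is $(\nabla F_{*})(U,V)=-F_{*}(\nabla_{U}V)$ since $F_{*}V=0$, so the condition is precisely that $\nabla_{U}V$ is vertical. This is exactly what Theorem~\ref{thm:5} analyses, and I plan to quote its expression for $\nabla_{U}V$ obtained by applying $-J$ to $J\nabla_{U}V$ (using the nearly Kaehler identity (\ref{eq:1}) to trade $J\nabla_{U}V$ for $\nabla_{U}(JV)-(\nabla_{V}J)U$), then decomposing with (\ref{eq:8})--(\ref{eq:10}). Extracting the horizontal part and using $\mathcal{P}_{V}U=-\mathcal{P}_{U}V$, $\mathcal{Q}_{V}U=-\mathcal{Q}_{U}V$ from (\ref{eq:11}) gives that $\nabla_{U}V$ is vertical iff $\mathcal{C}(\mathcal{T}_{U}\phi V+\mathcal{H}\nabla_{U}\omega V-\mathcal{P}_{U}V)=0$ and $\omega(\mathcal{V}\nabla_{U}\phi V+\mathcal{T}_{U}\omega V-\mathcal{Q}_{U}V)=0$, which is the first pair of conditions in the theorem.

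For the mixed case (c), I would write $(\nabla F_{*})(U,X)=-F_{*}(\nabla_{U}X)$ (using $F_{*}U=0$ and the pullback connection definition), so the vanishing condition is that $\mathcal{H}\nabla_{U}X=0$. To express this in the language of $\phi,\omega,\mathcal{B},\mathcal{C}$, I apply $\nabla_{U}X=-J^{2}\nabla_{U}X$ and compute $J\nabla_{U}X=\nabla_{U}(JX)-(\nabla_{U}J)X=\nabla_{U}(\mathcal{B}X+\mathcal{C}X)+\mathcal{P}_{X}U+\mathcal{Q}_{X}U$ using (\ref{eq:1}) and (\ref{eq:9}), then split $\nabla_{U}\mathcal{B}X$ by Lemma~\ref{lem:2}(i) and $\nabla_{U}\mathcal{C}X$ by Lemma~\ref{lem:2}(ii). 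Applying $-J$ once more and taking the horizontal component yields the two obstructions $\mathcal{C}(\mathcal{T}_{U}\mathcal{B}X+\mathcal{H}\nabla_{U}\mathcal{C}X-\mathcal{P}_{U}X)$ and $\omega(\mathcal{V}\nabla_{U}\mathcal{B}X+\mathcal{T}_{U}\mathcal{C}X-\mathcal{Q}_{U}X)$ (after applying (\ref{eq:11}) to absorb the sign), giving the second pair of conditions.

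The routine but error-prone part is bookkeeping: matching the outputs of $\phi,\omega$ (on vertical arguments) and $\mathcal{B},\mathcal{C}$ (on horizontal arguments) against the $\mathcal{V}/\mathcal{H}$-components produced by Lemma~\ref{lem:2}, and consistently using the skew-symmetry (\ref{eq:11}) to turn $\mathcal{P}_{X}U,\mathcal{Q}_{X}U$ into $-\mathcal{P}_{U}X,-\mathcal{Q}_{U}X$. The main conceptual step — and the only place where any real obstruction lies — is justifying that the horizontal vanishing conditions split cleanly into the stated $\Gamma(\omega\mathcal{D}^{\bot})$ and $\Gamma(\mathcal{D})$ membership conditions, which follows because $\mathcal{C}$ annihilates precisely $\omega\mathcal{D}^{\bot}$ (its kernel in $(ker F_{*})^{\bot}$) and $\omega$ annihilates precisely $\mathcal{D}$ (its kernel in $(ker F_{*})$).
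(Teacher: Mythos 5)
Your proposal follows the paper's proof essentially step for step: the horizontal--horizontal case is disposed of via Lemma~\ref{lem:1}(iii), the two remaining cases are reduced (using $F_{*}U=0$ and symmetry of $\nabla F_{*}$) to the vanishing of $F_{*}$ applied to $\nabla_{U}V$ and $\nabla_{U}X$, and the expansion via $J^{2}=-\mathrm{id}$, the nearly Kaehler identity and Lemma~\ref{lem:2} reproduces exactly the paper's display (\ref{eq:20a}) and its mixed analogue, with the skew-symmetry (\ref{eq:11}) absorbing the signs as you say. So in substance the route is the same.

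There is, however, one point where your write-up makes explicit a claim that the paper leaves implicit, and that claim is false in general: you assert that $\mathcal{C}$ annihilates precisely $\omega\mathcal{D}^{\bot}$. The inclusion $\ker\mathcal{C}\subseteq\omega\mathcal{D}^{\bot}$ does hold (if $\mathcal{C}X=0$ then $JX=\mathcal{B}X\in\Gamma(\mathcal{D}^{\bot})$, so $X=-J\mathcal{B}X=-\phi\mathcal{B}X-\omega\mathcal{B}X$, and horizontality forces $X=-\omega\mathcal{B}X$), and $\ker\omega=\mathcal{D}$ is likewise correct since $\omega W=0$ puts $W$ in $\ker F_{*}\cap J\ker F_{*}=\mathcal{D}$. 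But the reverse inclusion for $\mathcal{C}$ fails: by Lemma~\ref{lem:4}(ii) one has $\mathcal{C}\omega W=-\omega\phi W$ for $W\in\Gamma(\mathcal{D}^{\bot})$, which vanishes only when $\phi W=0$, i.e.\ only when $\mathcal{D}^{\bot}$ is totally real (the semi-invariant case). In the paper's first example, where $\mathcal{D}^{\bot}$ is slant with angle $2\alpha$, a direct computation gives $\mathcal{C}Z_{4}=\cos 2\alpha\, Z_{3}\neq 0$ even though $Z_{4}\in\Gamma(\omega\mathcal{D}^{\bot})$. Consequently the ``only if'' direction of the theorem goes through, but the ``if'' direction does not: membership of $\mathcal{T}_{U}\phi V+\mathcal{H}\nabla_{U}\omega V-\mathcal{P}_{U}V$ in $\Gamma(\omega\mathcal{D}^{\bot})$ does not force $\mathcal{C}$ of it to vanish. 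The genuinely equivalent conditions are $\mathcal{C}(\cdots)=0$ and $\omega(\cdots)=0$ (equivalently, $J(\cdots)$ vertical in the first case), and the stated $\Gamma(\omega\mathcal{D}^{\bot})$-membership is only a necessary consequence unless one additionally assumes $\phi|_{\mathcal{D}^{\bot}}=0$. The paper's own proof asserts the same unjustified equivalence, so this is a shared gap rather than a defect peculiar to your argument; but since you offered the kernel identification as the ``main conceptual step,'' it is the one step of your plan that does not survive scrutiny.
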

\begin{proof}
From the Lemma \ref{lem:1} and (\ref{eq:5}), it is obvious that the second fundamental form $(\nabla F_{*})$ of the Riemannian submersion $F$ satisfies $(\nabla F_{*})(X, Y) = 0$, for any $X, Y\in\Gamma(kerF_{*})^{\bot}$. Therefore, the generic Riemannian submersion $F$ is a totally geodesic map if and only if $(\nabla F_{*})(U, V) = 0$ and $(\nabla F_{*})(U, X) = 0$, for any $U, V\in\Gamma(kerF_{*})$ and $X\in\Gamma(kerF_{*})^{\bot}$. From the Lemma~\ref{lem:1}, (\ref{eq:1}) and (\ref{eq:5}), we can write
\begin{eqnarray}\label{eq:20a}
&(\nabla F_{*})(U, V)& = - F_{*}(\nabla_{U}V) = F_{*}(J^{2}\nabla_{U}V)\nonumber\\&&
= F_{*}\big\{\mathcal{B}\mathcal{T}_{U}\phi V + \mathcal{C}\mathcal{T}_{U}\phi V + \phi\mathcal{V}\nabla_{U}\phi V + \omega\mathcal{V}\nabla_{U}\phi V\nonumber\\&&\quad + \mathcal{B}\mathcal{H}\nabla_{U}\omega V + \mathcal{C}\mathcal{H}\nabla_{U}\omega V + \phi\mathcal{T}_{U}\omega V + \omega\mathcal{T}_{U}\omega V\nonumber\\&&\quad
+ \mathcal{B}\mathcal{P}_{V}U + \mathcal{C}\mathcal{P}_{V}U + \phi\mathcal{Q}_{V}U + \omega\mathcal{Q}_{V}U\big\}\nonumber\\&&
= F_{*}\big\{\mathcal{C}\mathcal{T}_{U}\phi V + \omega\mathcal{V}\nabla_{U}\phi V + \mathcal{C}\mathcal{H}\nabla_{U}\omega V + \omega\mathcal{T}_{U}\omega V\nonumber\\&&\quad - \mathcal{C}\mathcal{P}_{U}V - \omega\mathcal{Q}_{U}V\big\}.
\end{eqnarray}
Hence, $(\nabla F_{*})(U, V) = 0$ if and only if $\mathcal{C}\{\mathcal{T}_{U}\phi V + \mathcal{H}\nabla_{U}\omega V - \mathcal{P}_{U}V\} = 0$ and $\omega\{\mathcal{V}\nabla_{U}\phi V + \mathcal{T}_{U}\omega V - \mathcal{Q}_{U}V\} = 0$, that is, if and only if $(\mathcal{T}_{U}\phi V + \mathcal{H}\nabla_{U}\omega V - \mathcal{P}_{U}V)\in\Gamma(\omega\mathcal{D}^{\bot})$ and $(\mathcal{V}\nabla_{U}\phi V + \mathcal{T}_{U}\omega V - \mathcal{Q}_{U}V)\in\Gamma(\mathcal{D})$.\\
\indent It is known that the second fundamental form of the map is symmetric then analogous to above derivation, for $U\in\Gamma(ker F{*})$ and $X\in\Gamma(ker F_{*})^{\bot}$, we obtain
\begin{eqnarray*}
(\nabla F_{*})(U, X)& =& F_{*}\big\{\mathcal{C}\mathcal{T}_{U}\mathcal{B}X + \omega\mathcal{V}\nabla_{U}\mathcal{B}X + \mathcal{C}\mathcal{H}\nabla_{U}\mathcal{C}X\nonumber\\&& + \omega\mathcal{T}_{U}\mathcal{C}X - \mathcal{C}\mathcal{P}_{U}X - \omega\mathcal{Q}_{U}X\big\}.
\end{eqnarray*}
Hence, $(\nabla F_{*})(U, X) = 0$, if and only if, $(\mathcal{T}_{U}\mathcal{B}X + \mathcal{H}\nabla_{U}\mathcal{C}X - \mathcal{P}_{U}X)\in\Gamma(\omega\mathcal{D}^{\bot})$ and $(\mathcal{V}\nabla_{U}\mathcal{B}X + \mathcal{T}_{U}\mathcal{C}X - \mathcal{Q}_{U}X)\in\Gamma(\mathcal{D})$. Thus, the proof is complete.
\end{proof}
\begin{corollary}
Using (\ref{eq:5}) and the symmetry of the second fundamental form of $F$, it follows that
\begin{eqnarray}\label{eq:21}
&g_{B}((\nabla F_{*})(U, V), F_{*}X)& = - g_{B}(F_{*}\nabla_{U}V, F_{*}X) = - g_{B}(F_{*}\mathcal{H}\nabla_{U}V, F_{*}X)\nonumber\\&&
= - g_{M}(\mathcal{T}_{U}V, X),
\end{eqnarray}
and
\begin{eqnarray}\label{eq:22}
&g_{B}((\nabla F_{*})(U, X), F_{*}Y)& = - g_{B}(F_{*}\nabla_{X}U, F_{*}Y) = - g_{B}(F_{*}\mathcal{H}\nabla_{X}U, F_{*}Y)\nonumber\\&&
= - g_{M}(\mathcal{A}_{X}U, Y) = g_{M}(\mathcal{A}_{X}Y, U).
\end{eqnarray}
This implies that $F$ is a totally geodesic map if and only if $\mathcal{T}_{U}V = 0$ and $\mathcal{A}_{X}Y = 0$, for any $U, V\in\Gamma(kerF_{*})$ and $X, Y\in\Gamma(kerF_{*})^{\bot}$.
\end{corollary}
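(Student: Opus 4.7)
The plan is to verify the two scalar-product identities (\ref{eq:21}) and (\ref{eq:22}) directly from the defining formula (\ref{eq:5}) of the second fundamental form, and then to read off the totally geodesic criterion from them.

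For (\ref{eq:21}), I would take $U,V\in\Gamma(ker F_{*})$. Since $F_{*}V=0$, formula (\ref{eq:5}) collapses to $(\nabla F_{*})(U,V)=-F_{*}(\nabla_{U}V)$. By Lemma~\ref{lem:2}(i), $\nabla_{U}V=\hat{\nabla}_{U}V+\mathcal{T}_{U}V$; the vertical part is killed by $F_{*}$, so $(\nabla F_{*})(U,V)=-F_{*}(\mathcal{T}_{U}V)$, which is already horizontal, so only the horizontal part of $\nabla_U V$ contributes. Pairing with $F_{*}X$ and applying Lemma~\ref{lem:1}(i) (which says $g_{B}$ on $F$-related horizontal vectors equals $g_{M}$ on $M$) yields the first identity.

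For (\ref{eq:22}), I would use the symmetry of $\nabla F_{*}$ to write $(\nabla F_{*})(U,X)=(\nabla F_{*})(X,U)=-F_{*}(\nabla_{X}U)$. By Lemma~\ref{lem:2}(iii), $\nabla_{X}U=\mathcal{A}_{X}U+\mathcal{V}\nabla_{X}U$, and again the vertical component is annihilated by $F_{*}$. Taking the $g_{B}$-product with $F_{*}Y$ and using Lemma~\ref{lem:1}(i) gives $-g_{M}(\mathcal{A}_{X}U,Y)$. The last equality in (\ref{eq:22}) then uses the standard skew-adjointness of the O'Neill tensor, namely $g_{M}(\mathcal{A}_{X}U,Y)=-g_{M}(\mathcal{A}_{X}Y,U)$ for horizontal $X,Y$ and vertical $U$, which is the step deserving the most care; it can be obtained either by polarizing the metric-compatibility of $\nabla$ and using (\ref{eq:4}), or simply cited as a well-known property of $\mathcal{A}$ from O'Neill~\cite{neill}.

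For the final conclusion, observe that by Lemma~\ref{lem:1}(iii), $(\nabla F_{*})(X,Y)=0$ whenever $X,Y\in\Gamma(ker F_{*})^{\bot}$ are basic (and hence in general by tensoriality). Thus $F$ is totally geodesic iff $(\nabla F_{*})(U,V)=0$ and $(\nabla F_{*})(U,X)=0$ for all vertical $U,V$ and horizontal $X$. Since $F_{*}$ restricts to a linear isometry on the horizontal distribution, $g_{B}$ is nondegenerate on $F_{*}(\mathcal{H})$; combined with the facts that $\mathcal{T}_{U}V$ and $\mathcal{A}_{X}U$ are horizontal, the identities (\ref{eq:21}) and (\ref{eq:22}) show that the vanishing of $(\nabla F_{*})(U,V)$ is equivalent to $\mathcal{T}_{U}V=0$, and the vanishing of $(\nabla F_{*})(U,X)$ is equivalent to $\mathcal{A}_{X}Y=0$ (letting $Y$ range over the horizontal distribution and again using the skew-adjointness of $\mathcal{A}_{X}$ so that $\mathcal{A}_{X}U=0$ for all vertical $U$ is the same as $\mathcal{A}_{X}Y=0$ for all horizontal $Y$). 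The main subtlety, as noted, is the skew-adjointness invocation; everything else is a direct unpacking of the definitions.
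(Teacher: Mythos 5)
Your proposal is correct and follows essentially the same route as the paper: the corollary's displayed chain of equalities is itself the argument, obtained by specializing (\ref{eq:5}) with $F_{*}$ annihilating vertical vectors, decomposing $\nabla_{U}V$ and $\nabla_{X}U$ via Lemma~\ref{lem:2}, using that $F_{*}$ is an isometry on the horizontal distribution, and invoking the skew-adjointness of $\mathcal{A}_{X}$ for the last equality in (\ref{eq:22}). Your explicit justification of the final equivalence (injectivity of $F_{*}$ on $\mathcal{H}$ and the vanishing of $(\nabla F_{*})$ on horizontal pairs) fills in details the paper leaves implicit, but it is the same argument.
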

\begin{theorem}
Let $F:(M, g_{M}, J)\rightarrow (B, g_{B})$ be a generic Riemannian submersion from a nearly Kaehler manifold onto a Riemannian manifold. Then $\phi$ is parallel with respect to $\nabla$ if and only if $\mathcal{Q}_{U}V = \mathcal{T}_{U}\omega V - \mathcal{B}\mathcal{T}_{U}V$, for any $U, V\in\Gamma(ker F_{*})$.
\end{theorem}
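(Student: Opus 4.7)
The plan is to apply the covariant derivative formula for $\phi$ that was already established in equation~(\ref{eq:11c}), namely
\begin{equation*}
(\nabla_{U}\phi)V = \mathcal{B}\mathcal{T}_{U}V - \mathcal{T}_{U}\omega V + \mathcal{Q}_{U}V,
\end{equation*}
for any $U, V\in\Gamma(ker F_{*})$. By definition, the tensor $\phi$ is parallel with respect to $\nabla$ precisely when $(\nabla_{U}\phi)V = 0$ for every such pair $U, V$. Substituting this vanishing condition into the displayed identity and solving for $\mathcal{Q}_{U}V$ gives immediately the required equivalence $\mathcal{Q}_{U}V = \mathcal{T}_{U}\omega V - \mathcal{B}\mathcal{T}_{U}V$. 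No new geometric content is needed beyond what is already packaged in~(\ref{eq:11c}).

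Thus the proof is essentially a one-line rearrangement. The only point worth being explicit about is the convention: here ``parallel'' refers to the induced covariant derivative of $\phi$ defined in~(\ref{eq:11b}) by $(\nabla_{U}\phi)V = \hat{\nabla}_{U}\phi V - \phi\hat{\nabla}_{U}V$, which is precisely the derivative computed in~(\ref{eq:11c}). I would note this briefly so that the reader sees the statement is interpreted consistently with the earlier theorem, after which the ``if and only if'' follows by direct substitution. There is no substantive obstacle; the main work was already done in deriving~(\ref{eq:11c}) from the nearly Kaehler identity~(\ref{eq:1}) and the vertical component of~(\ref{eq:12}).
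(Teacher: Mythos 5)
Your proof is correct: equation~(\ref{eq:11c}) gives $(\nabla_{U}\phi)V = \mathcal{B}\mathcal{T}_{U}V - \mathcal{T}_{U}\omega V + \mathcal{Q}_{U}V$, and setting this to zero is exactly the stated condition, with ``parallel'' understood via~(\ref{eq:11b}) as you say. The paper reaches the same formula by a slightly longer route: instead of substituting directly, it pairs $(\nabla_{U}\phi)W$ with a third vertical field $V$ and uses the adjointness relations $g_{M}(\mathcal{T}_{U}X, V) = -g_{M}(X, \mathcal{T}_{U}V)$, $g_{M}(JX,Y)=-g_{M}(X,JY)$ and $g_{M}(\mathcal{Q}_{U}W, V) = -g_{M}(W, \mathcal{Q}_{U}V)$ to move every operator onto the other slot, obtaining $g_{M}((\nabla_{U}\phi)W, V) = g_{M}(W, \mathcal{T}_{U}\omega V - \mathcal{B}\mathcal{T}_{U}V - \mathcal{Q}_{U}V)$. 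Since $(\nabla_{U}\phi)W$ is vertical, vanishing of these inner products for all vertical $V, W$ is equivalent to the same condition. Your version is shorter and loses nothing; the paper's version additionally exhibits the skew-adjointness of $V\mapsto(\nabla_{U}\phi)V$ on the vertical distribution, which is why the two computations land on literally the same expression. Either way the substantive content is~(\ref{eq:11c}), and your argument is complete.
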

\begin{proof}
For $U, V, W\in\Gamma(ker F_{*})$ and $X\in\Gamma(ker F_{*})^{\bot}$, it is easy to see that $g_{M}(\mathcal{T}_{U}X, V) = - g_{M}(X, \mathcal{T}_{U}V)$ then from (\ref{eq:11c}), it follows that
$$
g_{M}((\nabla_{U}\phi)W, V) =  g_{M}(J\mathcal{T}_{U}W, V) + g_{M}(\omega W, \mathcal{T}_{U}V) + g_{M}(\mathcal{Q}_{U}W, V).
$$
Since $g_{M}(\mathcal{Q}_{U}W, V) = - g_{M}(W, \mathcal{Q}_{U}V)$ then last expression becomes
$$
g_{M}((\nabla_{U}\phi)W, V) =  g_{M}(W, \mathcal{T}_{U}\omega V) - g_{M}(W, \mathcal{B}\mathcal{T}_{U}V) - g_{M}(W, \mathcal{Q}_{U}V),
$$
this completes the proof.
\end{proof}
\begin{theorem} Let $F:(M, g_{M}, J)\rightarrow (B, g_{B})$ be a generic Riemannian submersion from a nearly Kaehler manifold onto a Riemannian manifold. Then $\omega$ is parallel with respect to $\nabla$ if and only if $\mathcal{P}_{U}X = \mathcal{T}_{U}\mathcal{C}X - \phi\mathcal{T}_{U}X$, for any $U \in\Gamma(kerF_{*})$ and $X \in\Gamma(kerF_{*})^{\bot}$.
\end{theorem}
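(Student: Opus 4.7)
The plan is to model the argument on that of the previous theorem. I would begin from identity (\ref{eq:11a}), namely $(\nabla_{U}\omega)V = \mathcal{C}\mathcal{T}_{U}V - \mathcal{T}_{U}\phi V + \mathcal{P}_{U}V$, for $U, V \in \Gamma(ker F_{*})$. Since $(\nabla_{U}\omega)V$ is a horizontal vector by the formula in (\ref{eq:11b}), its vanishing can be detected by taking the metric pairing against an arbitrary horizontal test field $X \in \Gamma(ker F_{*})^{\bot}$.

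The strategy is a dualization: rewrite $g_{M}((\nabla_{U}\omega)V, X)$ in the form $g_{M}(V, \Xi_{U,X})$ for some vertical field $\Xi_{U,X}$ depending only on $U$ and $X$, so that the vanishing of $(\nabla_{U}\omega)V$ for all $V$ collapses to $\Xi_{U,X} = 0$. Three adjoint identities drive the computation: the skew-adjointness of the O'Neill tensor $\mathcal{T}_{U}$ interchanging vertical and horizontal, $g_{M}(\mathcal{T}_{U}A, B) = -g_{M}(A, \mathcal{T}_{U}B)$; the antisymmetry of $\phi$ on $\Gamma(ker F_{*})$ and of $\mathcal{C}$ on $\Gamma(ker F_{*})^{\bot}$, each inherited from the antisymmetry of $J$ via the decompositions (\ref{eq:8})--(\ref{eq:9}); and an identity across the vertical/horizontal split obtained by differentiating $g_{M}(JA, B) + g_{M}(A, JB) = 0$ with $\nabla_{U}$ and projecting according to (\ref{eq:10}). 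Each of the three summands of (\ref{eq:11a}) is converted by one of these relations into a pairing of $V$ against a vertical vector built from $U$ and $X$, and the resulting formula yields the stated condition.

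An attractive shortcut is to notice that $\omega$ and $-\mathcal{B}$ are mutually adjoint tensors, from $g_{M}(\omega V, X) = -g_{M}(V, \mathcal{B}X)$, which is itself a direct consequence of the antisymmetry of $J$ together with the orthogonality of the vertical and horizontal splits. Parallelism of $\omega$ is therefore equivalent to parallelism of $\mathcal{B}$, and the condition can then be read off the already-derived formula for $(\nabla_{U}\mathcal{B})X$ simply by setting it to zero. The main point requiring care is the bookkeeping of which adjoint is applied to which term; no individual step is deep, and the whole argument is a routine adaptation of the proof of the preceding theorem for $\phi$, once the adjoint dictionary above is in hand.
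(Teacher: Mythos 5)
Your main line of argument is exactly the paper's proof: pair $(\nabla_{U}\omega)V = \mathcal{C}\mathcal{T}_{U}V - \mathcal{T}_{U}\phi V + \mathcal{P}_{U}V$ with a horizontal test field $X$ and move each term onto $V$ using the skew-adjointness of $\mathcal{T}_{U}$, of $\phi$ and $\mathcal{C}$ (inherited from $J$), and of $\nabla_{U}J$. Your proposed shortcut via the mutual adjointness of $\omega$ and $-\mathcal{B}$ is just a repackaging of the same dualization, so the proposal is correct and essentially identical in method to the paper.
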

\begin{proof}
Let $U, V \in\Gamma(kerF_{*})$ and $X \in\Gamma(kerF_{*})^{\bot}$ then from (\ref{eq:11a}), we have
$$
g_{M}((\nabla_{U}\omega)V, X) = - g_{M}(\mathcal{T}_{U}V, \mathcal{C}X) + g_{M}(\phi V, \mathcal{T}_{U}X) + g_{M}(\mathcal{P}_{U}V, X).
$$
This further implies
$$
g_{M}((\nabla_{U}\omega)V, X) = g_{M}(V, \mathcal{T}_{U}\mathcal{C}X) - g_{M}(V, \phi\mathcal{T}_{U}X) - g_{M}(V, \mathcal{P}_{U}X),
$$
this completes the proof.
\end{proof}
Analogously, we can derive the following assertions.
\begin{theorem} Let $F:(M, g_{M}, J)\rightarrow (B, g_{B})$ be a generic Riemannian submersion from a nearly Kaehler manifold onto a Riemannian manifold. Then $\mathcal{B}$ is parallel with respect to $\nabla$ if and only if $\mathcal{Q}_{U}V = \mathcal{T}_{U}\phi V - \mathcal{C}\mathcal{T}_{U}V$, for any $U, V \in\Gamma(kerF_{*})$.
\end{theorem}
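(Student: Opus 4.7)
The plan is to proceed in direct analogy with the two preceding ``parallel'' theorems (for $\phi$ and $\omega$). The starting point is the covariant-derivative formula for $\mathcal{B}$ that has already been recorded in the theorem listing the four covariant derivatives, namely
\begin{equation*}
(\nabla_{U}\mathcal{B})X = \phi\mathcal{T}_{U}X - \mathcal{T}_{U}\mathcal{C}X + \mathcal{Q}_{U}X,
\end{equation*}
for $U\in\Gamma(kerF_{*})$ and $X\in\Gamma(kerF_{*})^{\bot}$. Because $\mathcal{B}$ lands in the vertical bundle, its parallelism is equivalent to the scalar condition $g_{M}((\nabla_{U}\mathcal{B})X,V)=0$ for every vertical $V$, so I would pair this identity against an arbitrary $V\in\Gamma(kerF_{*})$ and then push every operator that sits on the horizontal argument $X$ over to the vertical side, converting the condition into one expressed purely in terms of $U,V\in\Gamma(kerF_{*})$.

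The rewriting is accomplished by three adjoint relations already deployed in the two preceding proofs: (i) the skew-adjointness of the O'Neill tensor, $g_{M}(\mathcal{T}_{U}E,F)+g_{M}(E,\mathcal{T}_{U}F)=0$; (ii) the skew-adjointness of $J$, which after projection gives $g_{M}(\phi A,B)=-g_{M}(A,\phi B)$ for vertical $A,B$ and $g_{M}(\mathcal{C}P,Q)=-g_{M}(P,\mathcal{C}Q)$ for horizontal $P,Q$; and (iii) the skew-adjointness of $\nabla J$, $g_{M}((\nabla_{U}J)E,F)+g_{M}(E,(\nabla_{U}J)F)=0$, which, after decomposing according to the horizontal/vertical splitting $(\nabla_{U}J)=\mathcal{P}+\mathcal{Q}$, supplies the required identity for $g_{M}(\mathcal{Q}_{U}X,V)$. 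The mixed pairings that arise in passing, e.g.\ $g_{M}(\omega\mathcal{T}_{U}X,V)$ and $g_{M}(\mathcal{B}X,\mathcal{T}_{U}V)$, all vanish automatically because the two factors lie in orthogonal distributions, and this is precisely what makes the computation collapse.

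Applying these rewrites term by term to $g_{M}((\nabla_{U}\mathcal{B})X,V)$ produces an identity of the shape $g_{M}((\nabla_{U}\mathcal{B})X,V)=g_{M}(X,\,\mathcal{T}_{U}\phi V-\mathcal{C}\mathcal{T}_{U}V-\mathcal{Q}_{U}V)$, in which the bracketed combination lies in $(kerF_{*})^{\bot}$: indeed, for $U,V\in\Gamma(kerF_{*})$ the tensor $\mathcal{T}_{U}V$ is horizontal by the very definition of $\mathcal{T}$, hence so is $\mathcal{C}\mathcal{T}_{U}V$, and $\mathcal{T}_{U}\phi V$ is horizontal for the same reason. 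Since $X$ ranges over all horizontal vectors, the vanishing of the left-hand side for every such $X$ is equivalent to the vanishing of the bracket, which is precisely the stated equivalence $\mathcal{Q}_{U}V=\mathcal{T}_{U}\phi V-\mathcal{C}\mathcal{T}_{U}V$.

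The only step that demands genuine care is adjoint (iii): the skew-adjointness of $\nabla J$ mixes the horizontal and vertical projections of $\nabla_{U}J$, so one has to keep precise track of which of $\mathcal{P}$ and $\mathcal{Q}$ appears on each side of the pairing when the two arguments lie in different distributions, and to invoke the nearly Kaehler skew-symmetries $\mathcal{P}_{U}V=-\mathcal{P}_{V}U$ and $\mathcal{Q}_{U}V=-\mathcal{Q}_{V}U$ at the right moment. This is the same technical subtlety present in the proofs for $\phi$ and $\omega$; once it is handled correctly, the rest of the argument is purely mechanical bookkeeping.
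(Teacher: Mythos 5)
Your method is exactly the one the paper intends: the paper gives no proof of this theorem beyond the word ``analogously,'' and the analogous argument is precisely what you describe --- pair the recorded formula $(\nabla_{U}\mathcal{B})X=\phi\mathcal{T}_{U}X-\mathcal{T}_{U}\mathcal{C}X+\mathcal{Q}_{U}X$ against a vertical $V$ and transpose every operator onto $V$ using the skew-adjointness of $\mathcal{T}_{U}$, of $J$, and of $\nabla_{U}J$, discarding the cross pairings between orthogonal distributions. The first two terms come out exactly as you say: $g_{M}(\phi\mathcal{T}_{U}X,V)=g_{M}(X,\mathcal{T}_{U}\phi V)$ and $-g_{M}(\mathcal{T}_{U}\mathcal{C}X,V)=-g_{M}(X,\mathcal{C}\mathcal{T}_{U}V)$.

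However, the step you yourself single out as delicate is the one your displayed identity gets wrong. Skew-adjointness of $\nabla_{U}J$ gives $g_{M}(\mathcal{Q}_{U}X,V)=g_{M}((\nabla_{U}J)X,V)=-g_{M}(X,(\nabla_{U}J)V)=-g_{M}(X,\mathcal{P}_{U}V)$, because only the \emph{horizontal} projection of $(\nabla_{U}J)V$ survives the pairing with the horizontal $X$. Your identity $g_{M}((\nabla_{U}\mathcal{B})X,V)=g_{M}(X,\mathcal{T}_{U}\phi V-\mathcal{C}\mathcal{T}_{U}V-\mathcal{Q}_{U}V)$ is therefore type-inconsistent: $\mathcal{Q}_{U}V$ is vertical by definition, so $g_{M}(X,\mathcal{Q}_{U}V)=0$ identically, your claim that the whole bracket lies in $(\ker F_{*})^{\bot}$ fails, and the argument as written would only yield the condition $\mathcal{T}_{U}\phi V=\mathcal{C}\mathcal{T}_{U}V$. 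What your (otherwise correct) computation actually proves is that $\mathcal{B}$ is parallel if and only if $\mathcal{P}_{U}V=\mathcal{T}_{U}\phi V-\mathcal{C}\mathcal{T}_{U}V$, in which all three terms are horizontal. Be aware that the theorem as printed carries the same $\mathcal{P}/\mathcal{Q}$ mix-up --- as does the displayed proof of the preceding $\omega$-theorem, where $-g_{M}(V,\mathcal{P}_{U}X)$ should read $-g_{M}(V,\mathcal{Q}_{U}X)$ --- so your final formula reproduces the paper's apparent typo rather than the statement your own strategy establishes; you should either correct $\mathcal{Q}_{U}V$ to $\mathcal{P}_{U}V$ or explain why the stated version would force both sides to vanish separately.
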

\begin{theorem} Let $F:(M, g_{M}, J)\rightarrow (B, g_{B})$ be a generic Riemannian submersion from a nearly Kaehler manifold onto a Riemannian manifold. Then $\mathcal{C}$ is parallel with respect to $\nabla$ if and only if $\mathcal{P}_{U}X = \mathcal{T}_{U}\mathcal{B}X - \omega\mathcal{T}_{U}X$, for any $U \in\Gamma(kerF_{*})$ and $X \in\Gamma(kerF_{*})^{\bot}$.
\end{theorem}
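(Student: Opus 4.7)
The plan is to deduce the statement directly from the covariant derivative identity
\[
(\nabla_{U}\mathcal{C})X \;=\; \omega\mathcal{T}_{U}X - \mathcal{T}_{U}\mathcal{B}X + \mathcal{P}_{U}X,
\]
one of the four formulas proved in the covariant-derivative theorem earlier in the excerpt, valid for every $U \in \Gamma(ker F_{*})$ and $X \in \Gamma(ker F_{*})^{\perp}$. By definition, $\mathcal{C}$ is parallel with respect to $\nabla$ exactly when $(\nabla_{U}\mathcal{C})X$ vanishes for all such $U$ and $X$, and this is equivalent to $\mathcal{P}_{U}X = \mathcal{T}_{U}\mathcal{B}X - \omega\mathcal{T}_{U}X$, which is the claimed identity.

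To present this in the pairing style used for the corresponding $\phi$ and $\omega$ theorems above, I would take the scalar product $g_{M}((\nabla_{U}\mathcal{C})X, Y)$ against an arbitrary horizontal test field $Y \in \Gamma(ker F_{*})^{\perp}$. The first step is to observe that $(\nabla_{U}\mathcal{C})X$ and each summand on the right-hand side live in the horizontal distribution: $\omega\mathcal{T}_{U}X$ because $\mathcal{T}_{U}X$ is vertical and $\omega$ sends vertical to horizontal; $\mathcal{T}_{U}\mathcal{B}X$ because $\mathcal{B}X \in \Gamma(\mathcal{D}^{\perp})$ is vertical by Lemma \ref{lem:4}(i) and $\mathcal{T}_{U}$ exchanges $\mathcal{H}$ and $\mathcal{V}$; and $\mathcal{P}_{U}X$ by its very definition. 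Then, exactly as in the proof of the $\omega$-case, the identities $\omega A = JA - \phi A$ on vertical $A$, the skew-symmetry of $J$, and the skew-adjointness of $\mathcal{T}_{U}$ as a map interchanging $\mathcal{H}$ and $\mathcal{V}$ allow every operator to be transferred onto the test vector; requiring the resulting pairing $g_{M}(X, \mathcal{T}_{U}\mathcal{B}Y - \omega\mathcal{T}_{U}Y - \mathcal{P}_{U}Y)$ to vanish for all $X$ delivers the stated condition.

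Because the real work is already absorbed in the covariant derivative theorem, I do not foresee any genuine obstacle. The proof is structurally parallel to the preceding theorem on $\omega$ being parallel, where (\ref{eq:11a}) is used in just the same way as the $\mathcal{C}$-identity is used here; only careful sign-tracking through the pairing step, together with the horizontal-placement check noted above, requires any attention.
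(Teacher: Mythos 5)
Your proof is correct and matches what the paper intends: the paper omits this proof with the remark ``Analogously, we can derive the following assertions,'' and your first paragraph --- setting $(\nabla_{U}\mathcal{C})X = \omega\mathcal{T}_{U}X - \mathcal{T}_{U}\mathcal{B}X + \mathcal{P}_{U}X$ equal to zero --- is exactly the intended argument, since here (unlike in the $\omega$ and $\mathcal{B}$ cases) the covariant-derivative formula is already expressed in terms of a horizontal $X$, in the same form as the stated condition. The pairing elaboration in your second paragraph is consistent with the paper's style for the $\phi$ and $\omega$ theorems but is superfluous for this one.
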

\begin{theorem} Let $F:(M, g_{M}, J)\rightarrow (B, g_{B})$ be a generic Riemannian submersion from a nearly Kaehler manifold onto a Riemannian manifold. If $\omega$ is parallel with respect to $\nabla$ then
\begin{eqnarray}\label{eq:3.10}
\mathcal{T}_{\phi U} {\phi U} = - \mathcal{T}_{U}\mathcal{B}\omega U - \mathcal{T}_{U}U - 2 \mathcal{P}_{U}\phi U,
\end{eqnarray}
for any $U \in\Gamma(kerF_{*})$.
\end{theorem}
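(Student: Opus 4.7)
The starting point is to translate the hypothesis $\nabla \omega = 0$ into a usable algebraic identity. Setting $(\nabla_U \omega)V = 0$ in formula (\ref{eq:11a}) yields, for all $U,V \in \Gamma(ker F_{*})$,
\begin{equation*}
\mathcal{T}_U \phi V \;=\; \mathcal{C}\mathcal{T}_U V + \mathcal{P}_U V. \qquad (\star)
\end{equation*}
This is the only consequence of the hypothesis I will need. The rest of the proof is pure algebra using three fixed ingredients: Lemma~\ref{lem:4}(ii), which gives $\phi^{2} = -\,\mathrm{id} - \mathcal{B}\omega$; the symmetry $\mathcal{T}_U V = \mathcal{T}_V U$ on vertical vectors from (\ref{eq:3}); and the skew-symmetry $\mathcal{P}_U V = -\mathcal{P}_V U$ from (\ref{eq:11}).

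The strategy is to produce two different expressions for the intermediate quantity $\mathcal{C}\mathcal{T}_U \phi U$ and then equate. First, I substitute $V = \phi U$ into $(\star)$ and replace $\mathcal{T}_U \phi^{2} U$ via Lemma~\ref{lem:4}(ii); this gives
\begin{equation*}
\mathcal{C}\mathcal{T}_U \phi U \;=\; -\,\mathcal{T}_U U - \mathcal{T}_U \mathcal{B}\omega U - \mathcal{P}_U \phi U.
\end{equation*}
Second, I substitute $(U,V) = (\phi U, U)$ into $(\star)$. On the left, the symmetry of $\mathcal{T}$ converts $\mathcal{T}_{\phi U} \phi U$ into an identity involving $\mathcal{T}_U \phi U$ on the right, while $\mathcal{P}_{\phi U} U = -\mathcal{P}_U \phi U$ handles the skew-symmetric term. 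This yields
\begin{equation*}
\mathcal{T}_{\phi U} \phi U \;=\; \mathcal{C}\mathcal{T}_U \phi U - \mathcal{P}_U \phi U.
\end{equation*}
Substituting the first display into the second eliminates $\mathcal{C}\mathcal{T}_U \phi U$ and delivers exactly (\ref{eq:3.10}); the coefficient $2$ in front of $\mathcal{P}_U \phi U$ comes from combining the two $-\mathcal{P}_U \phi U$ contributions.

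There is no serious obstacle here; the proof is a short manipulation. The only step that requires genuine thought is noticing that the symmetric character of $\mathcal{T}$ on vertical pairs, together with the skew character of $\mathcal{P}$, allows the single identity $(\star)$ to be used twice to create two relations with a shared intermediate term. The only care needed is in sign bookkeeping: using $\phi^{2} + \mathcal{B}\omega = -\,\mathrm{id}$ with the correct sign, and applying $\mathcal{P}_{\phi U} U = -\mathcal{P}_U \phi U$ rather than the other way round.
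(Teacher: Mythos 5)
Your proposal is correct and is essentially the paper's own argument, just reorganized: the paper antisymmetrizes the identity $\mathcal{T}_U\phi V = \mathcal{C}\mathcal{T}_U V + \mathcal{P}_U V$ in $U,V$ to get $2\mathcal{P}_U V = \mathcal{T}_U\phi V - \mathcal{T}_V\phi U$ and then sets $V=\phi U$, whereas you specialize to $(U,\phi U)$ and $(\phi U, U)$ separately and eliminate $\mathcal{C}\mathcal{T}_U\phi U$ — the same three ingredients (parallelism of $\omega$, symmetry of $\mathcal{T}$, skew-symmetry of $\mathcal{P}$, and $\phi^2+\mathcal{B}\omega=-\mathrm{id}$) in the same roles.
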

\begin{proof} Let $\omega$ be parallel with respect to $\nabla$ then from (\ref{eq:11a}), we have
$$
\mathcal{C}\mathcal{T}_{U}V  + \mathcal{P}_{U}V = \mathcal{T}_{U}\phi V,
$$
for any $U, V\in\Gamma(kerF_{*})$. Interchange the role of $U$, $V$ and then subtract the resulting equation from the last equation and further on using (\ref{eq:3}), we obtain
$$
2\mathcal{P}_{U} V= \mathcal{T}_{U}\phi V - \mathcal{T}_{V} \phi U.
$$
Furthermore, on substituting $V$ as $\phi U$ and then using the Lemma \ref{lem:4} (ii), the assertion follows.
\end{proof}
Now, we recall that a Riemannian submersion $F: (M, g_{M})\rightarrow (B, g_{B})$ between Riemannian manifolds is called a Riemannian submersion with totally umbilical fibers if
\begin{equation}\label{eq:23}
\mathcal{T}_{U}V = H^{\star}g_{M}(U, V),
\end{equation}
for $U, V\in\Gamma(ker F_{*})$, where $H^{\star}$ is the mean curvature vector of the fibers.
\begin{theorem}
Let $F$ be a generic Riemannian submersion with totally umbilical fibers from a nearly Kaehler manifold $(M, g, J)$ onto a Riemannian manifold $(B, g_{B})$. Then $H^{\star}\in\Gamma(\omega\mathcal{D}^{\bot})$.
\end{theorem}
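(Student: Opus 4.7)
The plan is to exploit the formula (\ref{eq:11a}) for the covariant derivative of $\omega$, together with the totally umbilical condition (\ref{eq:23}), evaluated on vectors in the holomorphic distribution $\mathcal{D}$. For any $V\in\Gamma(\mathcal{D})$ one has $\omega V=0$ by (\ref{eq:7a}), so the horizontal term $\mathcal{H}\nabla_{U}\omega V$ appearing in the expansion (\ref{eq:11b}) of $(\nabla_{U}\omega)V$ drops out. Hence (\ref{eq:11a}) reduces, for $U,V\in\Gamma(\mathcal{D})$, to
$$
-\,\omega\hat{\nabla}_{U}V \;=\; \mathcal{C}\mathcal{T}_{U}V - \mathcal{T}_{U}\phi V + \mathcal{P}_{U}V,
$$
and, since $\phi V\in\Gamma(\mathcal{D})\subset\Gamma(ker F_{*})$ by Lemma~\ref{lem:4}(i), the totally umbilical assumption applies to both $\mathcal{T}_{U}V$ and $\mathcal{T}_{U}\phi V$, turning the right-hand side into $g_{M}(U,V)\mathcal{C}H^{\star}-g_{M}(U,\phi V)H^{\star}+\mathcal{P}_{U}V$.

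Next I would set $U=V$ to kill the two unwanted terms simultaneously: the Hermitian compatibility gives $g_{M}(V,\phi V)=g_{M}(V,JV)=0$, and the skew-symmetry recorded in (\ref{eq:11}) gives $\mathcal{P}_{V}V=0$. What remains is the clean identity
$$
-\,\omega\hat{\nabla}_{V}V \;=\; |V|^{2}\,\mathcal{C}H^{\star}.
$$
The left side lies in $\Gamma(\omega\mathcal{D}^{\bot})$ by construction of the operator $\omega$, while the right side lies in $\Gamma(\mu)$ by construction of $\mathcal{C}$. Because $(ker F_{*})^{\bot}=\omega\mathcal{D}^{\bot}\oplus\mu$ is an orthogonal direct sum, each side must vanish, yielding $\mathcal{C}H^{\star}=0$.

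The concluding step, which I expect to be the main conceptual hurdle, is to promote the identity $\mathcal{C}H^{\star}=0$ (vanishing of the $\mu$-component of $JH^{\star}$) to the desired $H^{\star}\in\Gamma(\omega\mathcal{D}^{\bot})$ (vanishing of the $\mu$-component of $H^{\star}$ itself). This is where the $J$-invariance of $\mu$ is essential: for any $\xi\in\Gamma(\mu)$, the Hermitian identity gives
$$
g_{M}(H^{\star},\xi) \;=\; g_{M}(JH^{\star},J\xi) \;=\; g_{M}(\mathcal{B}H^{\star}+\mathcal{C}H^{\star},J\xi),
$$
and now $J\xi\in\Gamma(\mu)$ is horizontal while $\mathcal{B}H^{\star}\in\Gamma(\mathcal{D}^{\bot})$ is vertical, so the first summand contributes nothing and the second is zero by the previous step. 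Therefore $H^{\star}\perp\mu$, which is precisely the claim $H^{\star}\in\Gamma(\omega\mathcal{D}^{\bot})$.
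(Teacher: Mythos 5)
Your argument is correct and is essentially the paper's proof in different packaging: the identity you extract from (\ref{eq:11a}) with $U,V\in\Gamma(\mathcal{D})$ is exactly the relation the paper obtains by pairing the nearly Kaehler expansion of $J\nabla_{U}V$ with $X\in\Gamma(\mu)$, your specialization $U=V$ plays the role of the paper's symmetrization in $U$ and $V$ (which kills the same two terms $g_{M}(U,\phi V)H^{\star}$ and $\mathcal{P}_{U}V$), and both arguments finish by reading off that the $\mu$-component of $H^{\star}$, equivalently $\mathcal{C}H^{\star}$, vanishes. Your closing step via $g_{M}(H^{\star},\xi)=g_{M}(JH^{\star},J\xi)$ and the $J$-invariance of $\mu$ just makes explicit what the paper compresses into the phrase ``non-degeneracy of $\Gamma(\mathcal{D})$ and $\Gamma(\mu)$.''
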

\begin{proof}
Let $U, V\in\Gamma(\mathcal{D})$ then using the Lemma~\ref{lem:2} with (\ref{eq:1}), (\ref{eq:8}) and (\ref{eq:9}), it follows that
$$
\mathcal{T}_{U}\phi V + \mathcal{V}\nabla_{U}\phi V = \mathcal{B}\mathcal{T}_{U}V + \mathcal{C}\mathcal{T}_{U}V + \phi\mathcal{V}\nabla_{U}V + \omega\mathcal{V}\nabla_{U}V - \mathcal{P}_{V}U - \mathcal{Q}_{V}U.
$$
Take scalar product of last expression with $X\in\Gamma(\mu)$ and using (\ref{eq:23}), we get
\begin{equation}\label{eq:24}
g_{M}(U, \phi V)g_{M}(H^{\star}, X) = - g_{M}(U, V)g_{M}(H^{\star}, \mathcal{C}X) + g_{M}(\mathcal{P}_{U}V, X).
\end{equation}
Interchange the role of $U$, $V$ and then on adding the resulting expression with (\ref{eq:24}), it follows that $g_{M}(U, V)g_{M}(H^{\star}, \mathcal{C}X) = 0$, further, the non-degeneracy of $\Gamma(\mathcal{D})$ and $\Gamma(\mu)$ implies that $H^{\star}\in\Gamma(\omega\mathcal{D}^{\bot})$.
\end{proof}
\begin{theorem}
Let $F:(M, g_{M}, J)\rightarrow (B, g_{B})$ be a generic Riemannian submersion from a nearly Kaehler manifold onto a Riemannian manifold with totally umbilical fibers such that $\mathcal{P}_{U}\phi U = 0$, for any $U\in\Gamma(ker F_{*})$. If $\omega$ is parallel with respect to $\nabla$ then $F$ is with totally geodesic fibers.
\end{theorem}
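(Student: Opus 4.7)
The strategy is to combine three ingredients already in hand: the previous Theorem that gives identity \eqref{eq:3.10} as a consequence of $\omega$ being parallel, the totally umbilical relation \eqref{eq:23} that lets us rewrite every $\mathcal{T}$--term as a scalar multiple of the mean curvature vector $H^{\star}$, and the algebraic identity $\phi^{2}+\mathcal{B}\omega=-id$ from Lemma~\ref{lem:4}(ii). Using the extra hypothesis $\mathcal{P}_{U}\phi U=0$, equation \eqref{eq:3.10} simplifies to
\begin{equation*}
\mathcal{T}_{\phi U}\phi U \;=\; -\,\mathcal{T}_{U}\mathcal{B}\omega U - \mathcal{T}_{U}U,
\end{equation*}
for every $U\in\Gamma(ker F_{*})$. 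The plan is to plug \eqref{eq:23} into this identity and read off a scalar constraint forcing $H^{\star}=0$.

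The cleanest test vectors are $U\in\Gamma(\mathcal{D})$. For such a $U$ one has $\omega U=0$, hence $\mathcal{B}\omega U=0$, and $\phi U=JU$, so that $g_{M}(\phi U,\phi U)=g_{M}(U,U)$ by the Hermitian property. Substituting the totally umbilical formula into the displayed identity gives
\begin{equation*}
H^{\star}\,g_{M}(\phi U,\phi U) \;=\; -\,H^{\star}\,g_{M}(U,U),
\end{equation*}
i.e.\ $2\,H^{\star}\,g_{M}(U,U)=0$. Choosing any nonzero $U\in\Gamma(\mathcal{D})$ (so that $g_{M}(U,U)>0$) then forces $H^{\star}=0$, and \eqref{eq:23} at once yields $\mathcal{T}_{U}V=0$ for all vertical $U,V$, which is exactly the totally geodesic fiber condition.

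The only possible snag is the degenerate situation $\mathcal{D}=\{0\}$, where no such test vector exists inside $\mathcal{D}$. In that case, one would repeat the same computation for $U\in\Gamma(\mathcal{D}^{\bot})$, using $\mathcal{B}\omega U=-U-\phi^{2}U$ together with the skew--symmetry of $\phi$ on $\Gamma(ker F_{*})$ (a direct consequence of $g_{M}(JX,Y)=-g_{M}(X,JY)$ and $\omega U\in(ker F_{*})^{\bot}$), to reduce the constraint to $2H^{\star}g_{M}(\phi U,\phi U)=0$; unless $\mathcal{D}^{\bot}$ is totally real, the factor $g_{M}(\phi U,\phi U)$ is nonzero for some $U$, again giving $H^{\star}=0$. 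I expect the only delicate bookkeeping to be checking that $\mathcal{B}\omega U\in\Gamma(\mathcal{D}^{\bot})\subset\Gamma(ker F_{*})$, so that the totally umbilical formula applies to $\mathcal{T}_{U}\mathcal{B}\omega U$; this is immediate from Lemma~\ref{lem:4}(i).
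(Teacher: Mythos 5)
Your argument is correct, and its endgame is actually more direct than the paper's. Both proofs pivot on the same key identity \eqref{eq:3.10} (valid here since $\omega$ is parallel) specialised to $U\in\Gamma(\mathcal{D})$, where $\omega U=0$ kills the $\mathcal{T}_{U}\mathcal{B}\omega U$ term and the hypothesis kills $\mathcal{P}_{U}\phi U$, leaving $\mathcal{T}_{\phi U}\phi U=-\mathcal{T}_{U}U$. The divergence is in how $H^{\star}=0$ is extracted. The paper goes back to the nearly Kaehler identity $(\nabla_{\phi U}J)U+(\nabla_{U}J)\phi U=0$, substitutes \eqref{eq:3.10}, pairs the resulting vector equation with $X\in\Gamma(\omega\mathcal{D}^{\bot})$ to get $g_{M}(\mathcal{T}_{U}U,X)=0$, and then needs the \emph{preceding} theorem ($H^{\star}\in\Gamma(\omega\mathcal{D}^{\bot})$ for umbilical fibers) to upgrade ``$H^{\star}$ has no $\omega\mathcal{D}^{\bot}$-component'' to $H^{\star}=0$. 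You instead feed \eqref{eq:23} directly into the vector identity $\mathcal{T}_{\phi U}\phi U=-\mathcal{T}_{U}U$ and use $g_{M}(\phi U,\phi U)=g_{M}(JU,JU)=g_{M}(U,U)$ to get $2H^{\star}g_{M}(U,U)=0$ outright, bypassing the earlier theorem entirely; this is cleaner and loses nothing. Your caveat about $\mathcal{D}=\{0\}$ is fair but applies equally to the paper's proof, which also tests only on $U\in\Gamma(\mathcal{D})$ and moreover needs $\omega\mathcal{D}^{\bot}\neq\{0\}$; for a \emph{proper} generic submersion both proofs go through, and your fallback computation on $\mathcal{D}^{\bot}$ (using $\mathcal{B}\omega U=-U-\phi^{2}U$ and the skew-symmetry of $\phi$ on the fibers to get $2H^{\star}\|\phi U\|^{2}=0$) is a genuine extension of the result beyond what the paper establishes.
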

\begin{proof}
For any $U\in\Gamma(\mathcal{D})$, from (\ref{eq:1}), it follows that
$$
(\nabla_{\phi U}J)U + (\nabla_{U}J)\phi U = 0.
$$
Further, from the Lemma~\ref{lem:2} with (\ref{eq:8})--(\ref{eq:10}) and the hypothesis $\mathcal{P}_{U}\phi U = 0$, we obtain
\begin{equation*}
\mathcal{T}_{\phi U}\phi U + \mathcal{V}\nabla_{\phi U}\phi U - \mathcal{B}\mathcal{T}_{\phi U}U - \mathcal{C}\mathcal{T}_{\phi U}U - \phi\mathcal{V}\nabla_{\phi U}U - \omega\mathcal{V}\nabla_{\phi U}U + \mathcal{Q}_{U}\phi U = 0.
\end{equation*}
Since $\omega$ is parallel with respect to $\nabla$ then for any $U\in\Gamma(\mathcal{D})$, from (\ref{eq:11b}), we have $\omega\mathcal{V}\nabla_{\phi U}U = 0$ and using (\ref{eq:3.10}) in the last expression, we get
\begin{equation*}
- \mathcal{T}_{U}U + \mathcal{V}\nabla_{\phi U}\phi U - \mathcal{B}\mathcal{T}_{\phi U}U - \mathcal{C}\mathcal{T}_{\phi U}U - \phi\mathcal{V}\nabla_{\phi U}U + \mathcal{Q}_{U}\phi U = 0.
\end{equation*}
On taking scalar product of the last expression with $X\in\Gamma(\omega(\mathcal{D}^{\bot}))$, we get $g_{M}(\mathcal{T}_{U}U, X) = 0$. Fibers of the submersion $F$ are totally umbilical therefore, we have $g_{M}(U, U)g_{M}(H^{\star}, X) = 0$, then non-degeneracy of $(ker F_{*})$ and $\omega(\mathcal{D}^{\bot})$ gives $H^{\star} = 0$, this completes the proof.
\end{proof}
\begin{theorem}
Let $F:(M, g_{M}, J)\rightarrow (B, g_{B})$ be a generic Riemannian submersion from a nearly Kaehler manifold onto a Riemannian manifold. Then $M$ is a locally twisted product manifold of the form $M_{(ker F_{*})^{\bot}}\times_{f}M_{(ker F_{*})}$ if and only if
\begin{equation*}
\mathcal{A}_{X}\mathcal{B}Y + \mathcal{H}\nabla_{X}\mathcal{C}Y - \mathcal{P}_{X}Y\in\Gamma(\mu),\quad \mathcal{V}\nabla_{X}\mathcal{B}Y + \mathcal{A}_{X}\mathcal{C}Y - \mathcal{Q}_{X}Y = 0,
\end{equation*}
$$
\mathcal{T}_{U}JX = - g_{M}(X, \mathcal{T}_{U}U)\|U\|^{- 2}JU - J\nabla_{U}X,
$$
for any $U, V\in\Gamma(ker F_{*})$ and $X\in\Gamma(ker F_{*})^{\bot}$.
\end{theorem}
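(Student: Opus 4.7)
The plan is to invoke Ponge's Theorem~\ref{thm:A}(ii), which characterizes a twisted product $B\times_{f}F$ as a Riemannian manifold in which the first factor defines a totally geodesic foliation and the second a totally umbilical one. Accordingly, the locally twisted product structure $M_{(ker F_{*})^{\bot}}\times_{f}M_{(ker F_{*})}$ is equivalent to the conjunction of: (a) $(ker F_{*})^{\bot}$ is a totally geodesic foliation on $M$, and (b) the fibers $(ker F_{*})$ form a totally umbilical foliation.

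For (a), I would simply invoke Theorem~\ref{thm:4}: the horizontal distribution defines a totally geodesic foliation if and only if the first two displayed conditions of the statement hold. This is immediate and supplies the ``horizontal half'' of the characterization.

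For (b), the goal is to show that the third displayed condition is equivalent to total umbilicity of the fibers, namely $\mathcal{T}_{U}V = g_{M}(U,V)H^{\star}$ for a horizontal mean curvature vector field $H^{\star}$. Setting $V=U$ in this relation forces $H^{\star} = \|U\|^{-2}\mathcal{T}_{U}U$, which explains the coefficient $g_{M}(X, \mathcal{T}_{U}U)\|U\|^{-2} = g_{M}(X,H^{\star})$ appearing in the condition. I would then expand $\nabla_{U}JX$ in two ways: first via the nearly Kaehler identity $(\nabla_{U}J)X = -(\nabla_{X}J)U$ together with (\ref{eq:10}), and second by decomposing $JX = \mathcal{B}X + \mathcal{C}X$ and applying Lemma~\ref{lem:2}. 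Equating the two expressions, projecting onto vertical and horizontal components, and testing against vectors in $\mathcal{D}$, $\mathcal{D}^{\bot}$, $\omega\mathcal{D}^{\bot}$ and $\mu$ with the adjoint relation $g_{M}(\mathcal{T}_{U}X,V) = -g_{M}(X,\mathcal{T}_{U}V)$, should translate umbilicity into the third condition and conversely allow extraction of umbilicity from it.

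The main obstacle will be the bookkeeping required to isolate the pure umbilicity statement from the $J$-twisted form of the third condition, while keeping consistent signs under the nearly Kaehler symmetries (\ref{eq:11}) and tracking the interactions between $\mathcal{T}$ and the tensors $\phi$, $\omega$, $\mathcal{B}$, $\mathcal{C}$ via Lemma~\ref{lem:4}. Once both (a) and (b) are verified, Theorem~\ref{thm:A}(ii) yields the conclusion.
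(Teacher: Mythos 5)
Your proposal follows essentially the same route as the paper: both reduce the statement to Ponge--Reckziegel's Theorem~\ref{thm:A}(ii), obtain the horizontal totally geodesic condition from Theorem~\ref{thm:4}, and characterize total umbilicity of the fibers by expanding $\nabla_{U}JX$ via the nearly Kaehler identity and the decomposition $JX=\mathcal{B}X+\mathcal{C}X$ through Lemma~\ref{lem:2}, identifying $g_{M}(X,H^{\star})=g_{M}(X,\mathcal{T}_{U}U)\|U\|^{-2}$ exactly as the paper does. The computational step you defer as ``bookkeeping'' is precisely the step the paper itself dispatches as ``straightforward calculations,'' so your sketch matches the published argument in both structure and level of detail.
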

\begin{proof}
Let $U, V\in\Gamma(ker F_{*})$ and $X\in\Gamma(ker F_{*})^{\bot}$ then from (\ref{eq:1}), we have
$$
g_{M}(\nabla_{U}V, X) = - g_{M}(JV, \nabla_{U}JX - (\nabla_{U}J)X).
$$
Further, from the Lemma~\ref{lem:2}, we get
$$
g_{M}(\nabla_{U}V, X) = - g_{M}(JV, \mathcal{T}_{U}\mathcal{B}X + \mathcal{V}\nabla_{U}\mathcal{B}X + \mathcal{H}\nabla_{U}\mathcal{C}X + \mathcal{T}_{U}\mathcal{C}X - \mathcal{P}_{U}X - \mathcal{Q}_{U}X),
$$
this implies that $(ker F_{*})$ is totally umbilical if and only if
$$
\mathcal{T}_{U}\mathcal{B}X + \mathcal{V}\nabla_{U}\mathcal{B}X + \mathcal{H}\nabla_{U}\mathcal{C}X + \mathcal{T}_{U}\mathcal{C}X - \mathcal{P}_{U}X - \mathcal{Q}_{U}X = - X(\lambda)JU,
$$
where $\lambda$ is some function on $M$. Then by straightforward calculations, we obtain $X(\lambda) = g_{M}(X, \mathcal{T}_{U}U)\|U\|^{- 2}$ and hence
$$
\mathcal{T}_{U}JX = - g_{M}(X, \mathcal{T}_{U}U)\|U\|^{- 2}JU - J\nabla_{U}X.
$$
Thus, the proof follows form the Theorem~\ref{thm:A} and Theorem~\ref{thm:4}.
\end{proof}
It is well known that if a nearly Kaehler manifold $M$ is of constant holomorphic sectional curvature $c(m)$ at every point $m\in M$ then the Riemannian curvature tensor of $M$ is of the form \cite{ss}
\begin{eqnarray}\label{eq:25}
R(X, Y, Z, W)& =& \frac{c(m)}{4}\Big\{g_{M}(X, W)g_{M}(Y, Z) - g_{M}(X, Z)g_{M}(Y, W)\nonumber\\&&
+ g_{M}(X, JW)g_{M}(Y, JZ) - g_{M}(X, JZ)g_{M}(Y, JW)\nonumber\\&&
- 2 g_{M}(X, JY)g_{M}(Z, JW)\Big\}\nonumber\\&&
+ \frac{1}{4}\Big\{g_{M}((\nabla_{X}J)W, (\nabla_{Y}J)Z) - g_{M}((\nabla_{X}J)Z, (\nabla_{Y}J)W)\nonumber\\&&
- 2g_{M}((\nabla_{X}J)Y, (\nabla_{Z}J)W)\Big\},
\end{eqnarray}
for any vector fields $X, Y, Z, W$ on $M$.\\
\indent Now, we recall following result from \cite{Gray70} for later use.
\begin{theorem}
Let $M$ be a nearly Kaehler manifold. Then $M$ has (pointwise) constant type if and only if there exists a smooth function $\alpha$ on $M$ such that
\begin{equation}\label{eq:26}
\|(\nabla_{X}J)Y\|^{2} = \alpha\big\{\|X\|^{2}\|Y\|^{2} - g_{M}(X, Y)^{2} - g_{M}(X, JY)^{2}\big\},
\end{equation}
for any vector fields $X, Y$ on $M$. Furthermore, $M$ has global constant type if and only if (\ref{eq:26}) holds with a constant function $\alpha$. In this case $\alpha$ is called the constant type of $M$.
\end{theorem}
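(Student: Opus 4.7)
The plan is to unpack the notion of pointwise constant type and reduce the identity (\ref{eq:26}) to an algebraic computation on a convenient orthogonal decomposition of $Y$. Recall the definition: $M$ has pointwise constant type at $m\in M$ when $\|(\nabla_X J)Y\|^2$ takes the same value $\alpha(m)$ on every orthonormal pair $X,Y \in T_m M$ with $Y \perp JX$. Two preliminary identities drive the argument, and both hold on every nearly Kaehler manifold: first, $(\nabla_X J)X = 0$, obtained by setting $Y=X$ in (\ref{eq:1}); second, $(\nabla_X J)(JX) = 0$, which follows by differentiating $J^2 = -I$ to get $(\nabla_X J)J + J(\nabla_X J) = 0$ and then applying this to $X$. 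I would also note that $(\nabla_X J)Y$ is $C^\infty(M)$-bilinear in $(X,Y)$, hence $\mathbb{R}$-bilinear at each point, which legitimises the pointwise decomposition below.

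For the forward direction, assume $M$ has pointwise constant type with function $\alpha$. Fix $m\in M$ and $X,Y \in T_m M$ with $X \ne 0$ (the case $X=0$ makes both sides of (\ref{eq:26}) vanish). Using $g_M(X,JX)=0$ and $g_M(Y,JX) = -g_M(X,JY)$, decompose
\[
Y \;=\; \frac{g_M(X,Y)}{\|X\|^2}\,X \;-\; \frac{g_M(X,JY)}{\|X\|^2}\,JX \;+\; Z,
\]
where $Z$ is orthogonal to both $X$ and $JX$. Applying $(\nabla_X J)$ and invoking $\mathbb{R}$-bilinearity together with the two vanishing identities reduces this to $(\nabla_X J)Y = (\nabla_X J)Z$. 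When $Z \ne 0$, the pair $X/\|X\|, \; Z/\|Z\|$ is orthonormal with $Z/\|Z\| \perp J(X/\|X\|)$, so by pointwise constant type and bilinearity we obtain $\|(\nabla_X J)Y\|^2 = \alpha(m)\,\|X\|^2 \|Z\|^2$. The Pythagorean computation $\|X\|^2\|Z\|^2 = \|X\|^2\|Y\|^2 - g_M(X,Y)^2 - g_M(X,JY)^2$ then yields (\ref{eq:26}); the case $Z=0$ is automatic as both sides vanish.

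The converse is immediate: evaluating (\ref{eq:26}) on any orthonormal pair $X,Y$ with $Y\perp JX$ returns $\|(\nabla_X J)Y\|^2 = \alpha(m)$, which is the pointwise constant type condition. The final statement about global constant type follows at once by noting that $\alpha$ is constant on $M$ if and only if the common value of $\|(\nabla_X J)Y\|^2$ on orthonormal pairs with $Y\perp JX$ is the same at every point. The principal technical point I anticipate is executing the decomposition step cleanly, in particular confirming the two vanishing identities $(\nabla_X J)X=0$ and $(\nabla_X J)(JX)=0$ and using bilinearity to collapse $(\nabla_X J)Y$ to $(\nabla_X J)Z$; beyond that the theorem is a routine algebraic manipulation resting on the definition of constant type.
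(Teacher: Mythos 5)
Your proof is correct, and it is essentially the argument in Gray's original paper; note that the paper you are working from offers no proof of this statement at all --- it is explicitly recalled from \cite{Gray70} for later use --- so there is nothing internal to compare against. Your two vanishing identities $(\nabla_{X}J)X=0$ and $(\nabla_{X}J)(JX)=0$, the orthogonal decomposition of $Y$ against $X$ and $JX$, and the Pythagorean identity $\|X\|^{2}\|Z\|^{2}=\|X\|^{2}\|Y\|^{2}-g_{M}(X,Y)^{2}-g_{M}(X,JY)^{2}$ are exactly the right ingredients, and the converse and the global statement are as immediate as you say. The one point worth flagging: you have built into your definition of pointwise constant type that the common value $\alpha(m)$ is independent of $X$ as well as of $Y$, whereas Gray's definition fixes $X$ and only requires $\|(\nabla_{X}J)Y\|=\|(\nabla_{X}J)Z\|$ for admissible $Y,Z$. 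If one starts from that weaker definition, your forward direction needs one extra step --- independence of the base vector $X$ --- which follows from the skew-symmetry $(\nabla_{X}J)Y=-(\nabla_{Y}J)X$ together with a short chaining argument through a unit vector orthogonal to $X,JX,X',JX'$ (available since a non-Kaehler nearly Kaehler manifold has dimension at least six). With that remark supplied, and with the observation that smoothness of $\alpha$ follows by expressing it as $\|(\nabla_{E_{1}}J)E_{2}\|^{2}$ in a local orthonormal frame, your argument is complete.
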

\begin{theorem}
Let $F:(M, g_{M}, J)\rightarrow (B, g_{B})$ be a generic Riemannian submersion from a nearly Kaehler manifold onto a Riemannian manifold such that $M$ is with constant holomorphic sectional curvature $c$ and with constant type $\alpha$. If the distribution $\mathcal{D}$ is integrable and the submersion $F$ is with totally umbilical fibers then $c = \alpha$.
\end{theorem}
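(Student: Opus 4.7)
The plan is to evaluate the Riemannian curvature $R(V,Z,Z,V)$ of $M$ on a carefully chosen pair of unit vectors $V\in\Gamma(\mathcal{D})$ and $Z\in\Gamma(\mathcal{D}^{\perp})$ in two independent ways and equate. Both $V$ and $Z$ are vertical; since $\mathcal{D}\perp\mathcal{D}^{\perp}$ we have $g_{M}(V,Z)=0$. Moreover $\phi\mathcal{D}^{\perp}\subset\mathcal{D}^{\perp}$ by Lemma~\ref{lem:4}(i), and $\phi$ is skew--symmetric on $\ker F_{*}$, so $g_{M}(V,JZ)=g_{M}(V,\phi Z)=0$.

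For the first evaluation I substitute these four vectors into the nearly Kaehler curvature formula~(\ref{eq:25}). The orthogonalities above collapse the geometric bracket to $1$, contributing $\tfrac{c}{4}$. In the nearly Kaehler bracket, $(\nabla_{V}J)V=0$ (from~(\ref{eq:1}) with equal arguments) kills one term, while the skew--symmetry $(\nabla_{Z}J)V=-(\nabla_{V}J)Z$ folds the remaining two into $3\|(\nabla_{V}J)Z\|^{2}$. Applying~(\ref{eq:26}) with $X=V$, $Y=Z$ and using $g_{M}(V,Z)=g_{M}(V,JZ)=0$ yields $\|(\nabla_{V}J)Z\|^{2}=\alpha$. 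Hence $R(V,Z,Z,V)=\tfrac{c+3\alpha}{4}$.

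For the second evaluation I expand $R(V,Z)Z=\nabla_{V}\nabla_{Z}Z-\nabla_{Z}\nabla_{V}Z-\nabla_{[V,Z]}Z$ using Lemma~\ref{lem:2}, and substitute the umbilicity relation $\mathcal{T}_{U}V=g_{M}(U,V)H^{\star}$, which forces $\mathcal{T}_{V}Z=0$ while $\mathcal{T}_{V}V=\mathcal{T}_{Z}Z=H^{\star}$. I then invoke the previously established facts $H^{\star}\in\Gamma(\omega\mathcal{D}^{\perp})$ and its consequence $\mathcal{C}H^{\star}=0$ (from the corollary after~(\ref{eq:12}) applied with $U=V$ unit in $\mathcal{D}$), so that $JH^{\star}=\mathcal{B}H^{\star}\in\mathcal{D}^{\perp}$. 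The integrability of $\mathcal{D}$ controls the decomposition of $[V,Z]$ and of $\hat{\nabla}_{V}Z$ into $\mathcal{D}$ and $\mathcal{D}^{\perp}$ components, while the structural identities~(\ref{eq:11c})--(\ref{eq:14}) collapse the $J$--twisted derivatives. Taking the scalar product with $V$ and exploiting that all surviving $H^{\star}$ contributions sit in $\mathcal{D}^{\perp}\oplus\omega\mathcal{D}^{\perp}$ (hence are $g_{M}$--orthogonal to $V\in\mathcal{D}$) should reduce the expression to $\tfrac{c}{4}$. Equating with the first evaluation gives $c=\alpha$.

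The principal obstacle is this second evaluation. The direct expansion produces numerous cross--terms involving $\hat{\nabla}$, $\mathcal{T}$, $\mathcal{P}$, $\mathcal{Q}$, together with covariant derivatives of $H^{\star}$, and the bookkeeping needed to force the cancellations down to $\tfrac{c}{4}$ is delicate. Careful tracking of which pieces belong to $\mathcal{D}$, $\mathcal{D}^{\perp}$, $\mu$, or $\omega\mathcal{D}^{\perp}$, combined with the projections $\phi,\omega,\mathcal{B},\mathcal{C}$ and their algebraic identities from Lemma~\ref{lem:4}, is precisely where the hypotheses on integrability of $\mathcal{D}$ and umbilicity of the fibers become indispensable: each assumption removes one family of otherwise uncontrolled terms, leaving an expression that can be matched against the output of~(\ref{eq:25})--(\ref{eq:26}).
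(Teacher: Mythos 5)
Your first evaluation is sound: with $V\in\Gamma(\mathcal{D})$, $Z\in\Gamma(\mathcal{D}^{\perp})$ unit and mutually orthogonal, $g_{M}(V,JZ)=g_{M}(V,\phi Z)=0$, and formulas (\ref{eq:25})--(\ref{eq:26}) do give $R(V,Z,Z,V)=\tfrac{c+3\alpha}{4}$. The proof collapses at the second evaluation, and for a structural reason, not merely a bookkeeping one. The component you chose, $R(V,Z,Z,V)$ with \emph{both} arguments vertical, is governed by the Gauss equation of the fibers: O'Neill's formula for this type of component reads $g_{M}(R(U,V)W,F)=g_{M}(\hat{R}(U,V)W,F)+g_{M}(\mathcal{T}_{U}W,\mathcal{T}_{V}F)-g_{M}(\mathcal{T}_{V}W,\mathcal{T}_{U}F)$, where $\hat{R}$ is the intrinsic curvature of the fiber. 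Umbilicity and integrability of $\mathcal{D}$ control the $\mathcal{T}$-terms but say nothing about $\hat{R}(V,Z,Z,V)$, so no amount of tracking $\phi,\omega,\mathcal{B},\mathcal{C}$ will reduce your Koszul expansion to a known constant. This is exactly why the paper instead evaluates the \emph{mixed} component $g_{M}(R(U,\phi U)V,\omega V)$, with three vertical slots and one horizontal slot: for that type, O'Neill's identity (\ref{eq:28}) is of Codazzi form, expressed purely through $\mathcal{T}$ and its covariant derivatives, and the hypotheses force it to vanish. Comparing with the value computed from (\ref{eq:25}) then yields $c\,g_{M}(U,U)g_{M}(V,V)=\|(\nabla_{U}J)V\|^{2}$ and hence $c=\alpha$ via (\ref{eq:26}).

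There is also an arithmetic inconsistency that signals the problem independently: you assert the second evaluation "should reduce to $\tfrac{c}{4}$," but equating $\tfrac{c+3\alpha}{4}=\tfrac{c}{4}$ gives $\alpha=0$, not $c=\alpha$. To reach $c=\alpha$ your second evaluation would have to equal $c$ exactly, and the submersion equations cannot produce a $c$-dependent answer, since $c$ enters only through the ambient curvature identity you already spent on the first evaluation. The fix is to abandon the sectional-curvature component and switch to a curvature component for which the relevant O'Neill equation closes in terms of $\mathcal{T}$ alone, as in (\ref{eq:28}).
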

\begin{proof}
For a nearly Kaehler manifold $M$, it is known that $(\nabla_{U}J)(JV) = - J((\nabla_{U}J)V)$, for any vector fields $U$ and $V$ on $M$. Using this fact for $U\in\Gamma(\mathcal{D})$ and $V\in\Gamma(\mathcal{D}^{\bot})$ with (\ref{eq:25}), we derive
\begin{equation}\label{eq:27}
g_{M}(R(U, \phi U)V, \omega V) = - \frac{c}{2}g_{M}(U, U)g_{M}(V, V) + \frac{1}{2}\|(\nabla_{U}J)V\|^{2}.
\end{equation}
From \cite{neill}, it is known that
\begin{equation}\label{eq:28}
g_{M}(R(U, V)W, X) = g_{M}((\nabla_{V}\mathcal{T})_{U}W, X) - g_{M}((\nabla_{U}\mathcal{T})_{V}W, X),
\end{equation}
for any $U, V, W\in\Gamma(ker F_{*})$ and $X\in\Gamma(ker F_{*})^{\bot}$. Hence, from (\ref{eq:28}), we have
\begin{eqnarray}\label{eq:29}
g_{M}(R(U, \phi U)V, \omega V)& =& g_{M}(\nabla_{\phi U}\mathcal{T}_{U}V - \mathcal{T}_{\nabla_{\phi U}U}V - \mathcal{T}_{U}\nabla_{\phi U}V, \omega V)\nonumber\\&&
- g_{M}(\nabla_{U}\mathcal{T}_{\phi U}V - \mathcal{T}_{\nabla_{U}\phi U}V - \mathcal{T}_{\phi U}\nabla_{U}V, \omega V).
\end{eqnarray}
Since submersion $F$ is with totally umbilical fibers therefore $\mathcal{T}_{U}V = 0$, $\mathcal{T}_{\phi U}V = 0$ and using the fact the $\mathcal{T}$ is symmetric, (\ref{eq:29}) becomes
\begin{eqnarray}
g_{M}(R(U, \phi U)V, \omega V)& =& g_{M}(\mathcal{T}_{[U, \phi U]}V, \omega V) - g_{M}(\nabla_{U}V, \mathcal{T}_{\phi U}\omega V)\nonumber\\&& + g_{M}(\nabla_{\phi U}V, \mathcal{T}_{U}\omega V).\nonumber
\end{eqnarray}
Again using the hypothesis that the submersion $F$ is with totally umbilical fibers therefore $\mathcal{T}_{\phi U}\omega V = 0$ and $\mathcal{T}_{U}\omega V = 0$, therefore last expression becomes $g_{M}(R(U, \phi U)V, \omega V) = g_{M}([U, \phi U], V)g_{M}(H^{\star}, \omega V)$, as the distribution $\mathcal{D}$ is integrable, then we obtain
\begin{eqnarray}\label{eq:30}
g_{M}(R(U, \phi U)V, \omega V) = 0.
\end{eqnarray}
Hence from (\ref{eq:27}) and (\ref{eq:30}), we have $cg_{M}(U, U)g_{M}(V, V) = \|(\nabla_{U}J)V\|^{2}$, thus on using (\ref{eq:26}), proof is complete.
\end{proof}
\begin{theorem}
Let $F:(M, g_{M}, J)\rightarrow (B, g_{B})$ be a generic Riemannian submersion from a nearly Kaehler manifold onto a Riemannian manifold. Then $F$ is a harmonic map if and only if
$$
\nabla_{e_{i}}e_{i}^{\star} = \nabla_{e_{i}^{\star}}e_{i},\quad
(\mathcal{T}_{E_{j}}\phi E_{j} + \mathcal{H}\nabla_{E_{j}}\omega E_{j})\in\Gamma(\omega\mathcal{D}^{\bot}),
$$
and
$$
(\mathcal{V}\nabla_{E_{j}}\phi E_{j} + \mathcal{T}_{E_{j}}\omega E_{j})\in\Gamma(\mathcal{D}),
$$
for $\{e_{1},\ldots, e_{r}, e_{1}^{\star},\ldots, e_{r}^{\star}\}$ and $\{E_{1},\ldots, E_{k}\}$ orthogonal bases of $\mathcal{D}$ and $\mathcal{D}^{\bot}$, respectively.
\end{theorem}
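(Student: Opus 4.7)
The plan is to compute the tension field $\tau(F)$ on an adapted local orthonormal frame and identify the three stated conditions as the exact requirements that make each grouped summand vanish. By (\ref{eq:6}), harmonicity is equivalent to $\tau(F)=\sum_{a=1}^{m}(\nabla F_{*})(f_{a},f_{a})=0$. I would first choose an orthonormal frame of the form $\{e_{1},\ldots,e_{r},e_{1}^{\star},\ldots,e_{r}^{\star},E_{1},\ldots,E_{k},X_{1},\ldots,X_{n}\}$, where $e_{i}^{\star}=\phi e_{i}$ together with $e_{i}$ span $\mathcal{D}$, the $E_{j}$'s span $\mathcal{D}^{\bot}$, and the $X_{\alpha}$'s (chosen basic) span $(kerF_{*})^{\bot}$. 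Lemma~\ref{lem:1}(iii) combined with (\ref{eq:5}) gives $(\nabla F_{*})(X_{\alpha},X_{\alpha})=0$, so $\tau(F)$ reduces to a sum over the vertical basis vectors only.

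Second, I would apply the already-derived expression (\ref{eq:20a}) to each $E_{j}\in\Gamma(\mathcal{D}^{\bot})$. The skew-symmetry in (\ref{eq:11}) forces $\mathcal{P}_{E_{j}}E_{j}=0$ and $\mathcal{Q}_{E_{j}}E_{j}=0$, leaving
\[
(\nabla F_{*})(E_{j},E_{j})=F_{*}\bigl\{\mathcal{C}(\mathcal{T}_{E_{j}}\phi E_{j}+\mathcal{H}\nabla_{E_{j}}\omega E_{j})+\omega(\mathcal{V}\nabla_{E_{j}}\phi E_{j}+\mathcal{T}_{E_{j}}\omega E_{j})\bigr\}.
\]
Because $\mathcal{C}$ takes values in $\mu$ while $\omega$ (applied to vertical fields) takes values in $\omega\mathcal{D}^{\bot}$, and since $(kerF_{*})^{\bot}=\omega\mathcal{D}^{\bot}\oplus\mu$ is an orthogonal decomposition, the two summands must vanish independently. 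That gives the second and third conditions in the statement.

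Third, I would handle the $\mathcal{D}$-pair $(e_{i},e_{i}^{\star})$ together. Since $\omega|_{\mathcal{D}}=0$, and Lemma~\ref{lem:4}(i) gives $\phi e_{i}=e_{i}^{\star}$ with $\phi e_{i}^{\star}=-e_{i}$, and again $\mathcal{P}_{e_{i}}e_{i}=\mathcal{Q}_{e_{i}}e_{i}=0$ and likewise at $e_{i}^{\star}$, substitution into (\ref{eq:20a}) yields
\[
(\nabla F_{*})(e_{i},e_{i})=F_{*}\{\mathcal{C}\mathcal{T}_{e_{i}}e_{i}^{\star}+\omega\mathcal{V}\nabla_{e_{i}}e_{i}^{\star}\},\quad (\nabla F_{*})(e_{i}^{\star},e_{i}^{\star})=-F_{*}\{\mathcal{C}\mathcal{T}_{e_{i}^{\star}}e_{i}+\omega\mathcal{V}\nabla_{e_{i}^{\star}}e_{i}\}.
\]
Adding and using the symmetry $\mathcal{T}_{e_{i}}e_{i}^{\star}=\mathcal{T}_{e_{i}^{\star}}e_{i}$ from (\ref{eq:3}), the $\mathcal{C}\mathcal{T}$-terms cancel and the pairwise sum collapses to $F_{*}\omega\mathcal{V}(\nabla_{e_{i}}e_{i}^{\star}-\nabla_{e_{i}^{\star}}e_{i})$, which vanishes precisely under the stated condition $\nabla_{e_{i}}e_{i}^{\star}=\nabla_{e_{i}^{\star}}e_{i}$. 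Collecting all three groups then characterizes $\tau(F)=0$.

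The main obstacle is executing the cancellation in the $\mathcal{D}$-pair cleanly; it rests on the symmetry of $\mathcal{T}$, the identification $\phi^{2}|_{\mathcal{D}}=-id$ from Lemma~\ref{lem:4}(ii), and the vanishing of the diagonal $\mathcal{P}$, $\mathcal{Q}$ terms via (\ref{eq:11}). One must also justify using the orthogonal splitting $(kerF_{*})^{\bot}=\omega\mathcal{D}^{\bot}\oplus\mu$ to read off conditions (ii) and (iii) as independent statements rather than lumped together. Granted those, the ``if'' direction is immediate by summation, and the ``only if'' direction follows from the pointwise independence of contributions from distinct $E_{j}$'s and distinct $(e_{i},e_{i}^{\star})$ pairs in the adapted frame.
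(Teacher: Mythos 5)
Your proposal is correct and follows the paper's own proof essentially step for step: the same adapted frame $\{e_{i},\phi e_{i},E_{j}\}$, the same reduction of $\tau(F)$ to vertical directions, the same substitution into (\ref{eq:20a}) with $\mathcal{P}_{U}U=\mathcal{Q}_{U}U=0$ from (\ref{eq:11}), and the same cancellation of the $\mathcal{C}\mathcal{T}$-terms via the symmetry (\ref{eq:3}). No substantive differences to report.
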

\begin{proof}
It is known that the distribution $\mathcal{D}$ is $\phi-$invariant therefore take $\{e_{1},\ldots, e_{r}, e_{1}^{\star},\ldots, e_{r}^{\star}\}$ as an orthogonal basis of $\mathcal{D}$, where $e_{i}^{\star} = Je_{i} = \phi e_{i}$, for $i\in\{1,\ldots, r\}$ and let $\{E_{1},\ldots, E_{k}\}$ be an orthogonal basis of $\mathcal{D}^{\bot}$. Moreover, the second fundamental form $\nabla F_{*}$ of the Riemannian submersion $F$ satisfies $(\nabla F_{*})(X, Y) = 0$, for any $X, Y\in\Gamma(kerF_{*})^{\bot}$. Hence, the tension field $\tau(F)$ of $F$ is given by
$$
\tau(F) = \sum_{i = 1}^{r}\big\{(\nabla F_{*})(e_{i}, e_{i}) + (\nabla F_{*})(e_{i}^{\star}, e_{i}^{\star})\big\} + \sum_{j = 1}^{k}(\nabla F_{*})(E_{j}, E_{j}),
$$
using (\ref{eq:20a}), we further derive
\begin{eqnarray}
&\tau(F)& = \sum_{i = 1}^{r}\big\{F_{*}(\mathcal{C}\mathcal{T}_{e_{i}}e_{i}^{\star} + \omega\mathcal{V}\nabla_{e_{i}}e_{i}^{\star}) - F_{*}(\mathcal{C}\mathcal{T}_{e_{i}^{\star}}e_{i} + \omega\mathcal{V}\nabla_{e_{i}^{\star}}e_{i})\big\}\nonumber\\&&\quad
+ \sum_{j = 1}^{k}F_{*}(\mathcal{C}\mathcal{T}_{E_{j}}\phi E_{j} + \omega\mathcal{V}\nabla_{E_{j}}\phi E_{j} + \mathcal{C}\mathcal{H}\nabla_{E_{j}}\omega E_{j} + \omega\mathcal{T}_{E_{j}}\omega E_{j})\nonumber\\&&
= \sum_{i = 1}^{r}F_{*}(\omega\mathcal{V}(\nabla_{e_{i}}e_{i}^{\star} - \nabla_{e_{i}^{\star}}e_{i}))
+ \sum_{j = 1}^{k}F_{*}(\mathcal{C}(\mathcal{T}_{E_{j}}\phi E_{j} + \mathcal{H}\nabla_{E_{j}}\omega E_{j}))\nonumber\\&&\quad
+  \sum_{j = 1}^{k}F_{*}(\omega(\mathcal{V}\nabla_{E_{j}}\phi E_{j} + \mathcal{T}_{E_{j}}\omega E_{j})).
\end{eqnarray}
Thus, the proof is complete.
\end{proof}

\textbf{Author's Addresses:}\\
Rupali Kaushal and R. K. Nagaich\\
Department of Mathematics, Punjabi University, Patiala-147 002, India\\
Email: rupalimaths@pbi.ac.in; nagaich58rakesh@gmail.com\\[0.2cm]
Rashmi Sachdeva and Rakesh Kumar\\
Department of Basic and Applied Sciences, Punjabi University, Patiala-147 002, India\\
Email: rashmi.sachdeva86@gmail.com; dr$\_$rk37c@yahoo.co.in
\end{document}